\newcommand{\cf}{\emph{cf.}\xspace}
\newcommand{\bdmath}{\begin{dmath}}
\newcommand{\edmath}{\end{dmath}}
\newcommand{\beq}{\begin{equation}}
\newcommand{\eeq}{\end{equation}}
\newcommand{\bdm}{\begin{displaymath}}
\newcommand{\edm}{\end{displaymath}}
\newcommand{\bea}{\begin{eqnarray}}
\newcommand{\eea}{\end{eqnarray}}
\newcommand{\beal}{\beq \begin{array}{ll}}
\newcommand{\eeal}{\end{array} \eeq}
\newcommand{\beas}{\begin{eqnarray*}}
\newcommand{\eeas}{\end{eqnarray*}}
\newcommand{\ba}{\begin{array}}
\newcommand{\ea}{\end{array}}
\newcommand{\bit}{\begin{itemize}}
\newcommand{\eit}{\end{itemize}}
\newcommand{\ben}{\begin{enumerate}}
\newcommand{\een}{\end{enumerate}}
\newcommand{\calA}{{\cal A}}
\newcommand{\calC}{{\cal C}}
\newcommand{\calG}{{\cal G}}
\newcommand{\calH}{{\cal H}}
\newcommand{\calN}{{\cal N}}
\newcommand{\etal}{\emph{et~al.}\xspace}
\newcommand{\eg}{\emph{e.g.,}\xspace}
\newcommand{\ie}{\emph{i.e.,}\xspace}
\newcommand{\hide}[1]{}
\newcommand{\hiddenText}{{\color{gray} hidden text.}}
\newcommand{\hideWithText}[1]{\hiddenText}
\newcommand{\subject}{\text{ subject to }}
\newcommand{\norm}[1]{\left\| #1 \right\|}
\newcommand{\tran}{^{\mathsf{T}}}
\newcommand{\trace}[1]{\mathrm{tr}\left(#1\right)}
\newcommand{\rank}[1]{\mathrm{rank}\left(#1\right)}
\newcommand{\Real}[1]{ { {\mathbb R}^{#1} } }
\newcommand{\reals}{\Real{}}
\newcommand{\SOtwo}{\ensuremath{\mathrm{SO}(2)}\xspace}
\newcommand{\scenario}[1]{{\smaller \sf#1}\xspace}
\newcommand{\blue}[1]{{\color{blue}#1}}
\newcommand{\red}[1]{{\color{red}#1}}
\newcommand{\linkToPdf}[1]{\href{#1}{\blue{(pdf)}}}
\newcommand{\linkToPpt}[1]{\href{#1}{\blue{(ppt)}}}
\newcommand{\linkToCode}[1]{\href{#1}{\blue{(code)}}}
\newcommand{\linkToWeb}[1]{\href{#1}{\blue{(web)}}}
\newcommand{\linkToVideo}[1]{\href{#1}{\blue{(video)}}}
\newcommand{\linkToMedia}[1]{\href{#1}{\blue{(media)}}}
\newcommand{\award}[1]{\xspace} 
\newcommand{\shucheng}[1]{#1}
\newcommand{\bbN}{\mathbb{N}}
\renewcommand{\norm}[1]{\left\lVert #1 \right\rVert}
\newcommand{\inprod}[2]{\left\langle #1, #2 \right\rangle}
\newcommand{\mymid}{:}
\newcommand{\bmat}{\left[ \begin{array}}
\newcommand{\emat}{\end{array}\right]}
\newcommand{\poly}[1]{\mathbb{R}[#1]}
\newcommand{\ceil}[1]{\left\lceil #1 \right\rceil}
\newcommand{\nameshort}{\scenario{STROM}}
\newcommand{\rc}[1]{r_{c, #1}}
\newcommand{\rs}[1]{r_{s, #1}}
\newcommand{\fc}[1]{f_{c, #1}}
\newcommand{\fs}[1]{f_{s, #1}}
\newcommand{\rx}[1]{r_{x, #1}}
\newcommand{\ry}[1]{r_{y, #1}}
\newcommand{\vx}[1]{v_{x, #1}}
\newcommand{\vy}[1]{v_{y, #1}}
\newcommand{\dt}{\Delta t}
\newcommand{\seqordering}[1]{\left[ #1 \right]}
\newcommand{\dvar}{z}
\newcommand{\tms}{\varphi}
\newcommand{\degf}[1]{d^f_{#1}}
\newcommand{\degg}[1]{d^g_{#1}}
\newcommand{\degh}[1]{d^h_{#1}}
\newcommand{\basis}[2]{\left[ #1 \right]_{#2}}
\newcommand{\symm}[1]{\mathbb{S}^{#1}}
\newcommand{\symmp}[1]{\mathbb{S}^{#1}_{+}}
\newcommand{\cartsymm}{\Omega}
\newcommand{\cartsymmp}{\Omega_+}
\newcommand{\subopt}{\xi}
\newcommand{\xf}{x_f}
\newcommand{\mosek}{\textsc{mosek}\xspace}
\newcommand{\tssos}{\textsc{tssos}\xspace}
\newcommand{\sostools}{\textsc{sostools}\xspace}
\newcommand{\cuadmm}{\scenario{cuADMM}}
\newcommand{\sgsadmm}{\scenario{sGS-ADMM}}
\newcommand{\cdcs}{\textsc{cdcs}\xspace}
\newcommand{\scs}{\textsc{scs}\xspace}
\newcommand{\yalmip}{\textsc{yalmip}\xspace}
\newcommand{\sdpnal}{\textsc{sdpnal+}\xspace}
\newcommand{\fmincon}{\textsc{fmincon}\xspace}
\newcommand{\sedumi}{\textsc{sedumi}\xspace}
\newcommand{\init}{\text{init}}
\newcommand{\myendexample}{\hfill$\blacktriangle$}
\setlist[enumerate]{topsep=0pt}
\newcolumntype{E}{>{\displaystyle}r@{}l@{}l}
\begin{document}
\title{Fast and Certifiable Trajectory Optimization \vspace{-4mm}}

%
%
\author{{Shucheng Kang}\inst{1} \and
{Xiaoyang Xu}\inst{2} \and
{Jay Sarva}\inst{3} \and
{Ling Liang}\inst{4} \and
{Heng Yang}\inst{1}
}
\authorrunning{S. Kang, X. Xu, J. Sarva, L. Liang, H. Yang}
%
\institute{Harvard University \and
University of California at Santa Barbara  \and
Brown University \and
University of Maryland at College Park
}

\maketitle              

\begin{center}
  \vspace{-6mm}
 \url{https://computationalrobotics.seas.harvard.edu/project-strom}
\end{center}


\vspace{-8mm}
\begin{abstract}
    We propose semidefinite trajectory optimization (\nameshort), a framework that computes fast and certifiably optimal solutions for nonconvex trajectory optimization problems defined by polynomial objectives and constraints. \nameshort employs sparse second-order Lasserre's hierarchy to generate semidefinite program (SDP) relaxations of trajectory optimization. Different from existing tools (\eg \yalmip~and~\sostools in Matlab), \nameshort generates chain-like multiple-block SDPs with only positive semidefinite (PSD) variables. {Moreover, \nameshort does so {two orders of magnitude faster}}. Underpinning \nameshort is \cuadmm, the first ADMM-based SDP solver implemented in CUDA \shucheng{(with C/C++ extension)} and runs in GPUs. \cuadmm builds upon the symmetric Gauss-Seidel ADMM algorithm and leverages GPU parallelization to speedup solving sparse linear systems and projecting onto PSD cones. In five trajectory optimization problems (inverted pendulum, cart-pole, vehicle landing, flying robot, and
    car back-in), \cuadmm computes optimal trajectories (with certified suboptimality below $1\%$) in minutes (when other solvers take hours
    or run out of memory) and seconds (when others take minutes). Further, when warmstarted by data-driven initialization in the inverted pendulum problem, \cuadmm delivers real-time performance: providing certifiably optimal trajectories in $0.66$ seconds despite the SDP has $49,500$ variables and $47,351$ constraints.


\end{abstract}
\vspace{-8mm}

\section{Introduction}
\label{sec:introduction}

{Trajectory optimization}~\cite{bryson18book-applied} designs dynamical system trajectories by optimizing a performance measure subject to constraints, finding extensive applications in motion planning of robotic~\cite{posa2014ijrr-traopt-directmethod-contact}, aerospace~\cite{malyuta2022ieee-convexopt-trajectorygeneration}, and manufacturing systems~\cite{eaton92ces-model}.

{\bf Problem Statement}. Let $N$ be the number of steps (with $[N]:=\{1,\dots,N\}$), $\{ x_k\}_{k=0}^{N} \subset \Real{d_x}$ be the state trajectory, and $\{ u_k \}_{k=0}^{N-1} \subset \Real{d_u}$ be the control trajectory, we consider the following trajectory optimization problem:
    \begin{subequations}\label{eq:intro:trajopt}
        \begin{eqnarray}
            \label{eq:intro:trajopt-generalform}
            \min_{\{ u_k \}_{k=0}^{N-1}, \{ x_k \}_{k=0}^{N}} & \displaystyle l_N(x_N) + \sum_{k=0}^{N-1} l_k(x_k, u_k) \\
            \subject & x_0 = x_{\text{init}} \\
            & F_k(x_{k-1}, u_{k-1}, x_{k}) = 0, \ \forall k \in [N] \label{eq:intro:trajopt-generalform-dyn} \\
            & (u_{k-1}, x_k) \in \calC_k, \ \forall k \in [N] \label{eq:intro:trajopt-generalform-con}
        \end{eqnarray}
    \end{subequations}
where $l_k,k=0,\dots,N$ are the instantaneous and terminal loss functions; $x_{\text{init}}$ is the initial state; $F_k$ represents the discretized system dynamics in the form of a differential algebraic equation (\eg obtained from the continuous-time dynamics via multiple shooting, \cf Example~\ref{example:strom:popsdp:momentrelax-example} and \S\ref{sec:exp}); and $\calC_k$ imposes constraints on $u_{k-1}$ and $x_k$ (\eg control limits, obstacle avoidance). Trajectory optimization computes open-loop control; when paired with receding horizon control (\ie execute only part of the optimal control sequence and repeatedly solve~\eqref{eq:intro:trajopt})~\cite{mayne88cdc-receding}, leads to closed-loop control with implicit feedback known as \emph{model predictive control} (MPC)~\cite{borrelli17book-mpc}. 
In the case of linear system dynamics, (convex) quadratic losses, and polytopic sets, problem~\eqref{eq:intro:trajopt} reduces to a quadratic program, \ie constrained linear quadratic regulator (LQR)~\cite{borrelli17book-mpc}. 

In this paper, we assume $l_k$ and $F_k$ are polynomial functions and $\calC_k$ are basic semialgebraic sets (\ie described by polynomial constraints), in which case problem~\eqref{eq:intro:trajopt} is an instance of \emph{polynomial optimization} (POP) that is nonconvex and NP-hard in general. We briefly review solution methods for problem \eqref{eq:intro:trajopt}.

\textbf{Local Solver}. 
Most efforts over the past decades have focused on developing solvers that efficiently find local solutions, such as iterative LQR~\cite{li04-iLQR}, differential dynamic programming~\cite{tassa14icra-control}, sequential quadratic programming~\cite{posa2014ijrr-traopt-directmethod-contact} and other nonlinear programming algorithms~\cite{hargraves1987jgcd-trajectoryoptimization-nlp,andersson19mpc-casadi,howell2019iros-altro}. Recent efforts design local solvers on GPUs~\cite{adabag2024arxiv-mpcgpu}, embedded systems~\cite{alavilli23-tinympc,aydinoglu2023icra-realtime-multicontact-mpc-admm}, and make them end-to-end differentiable~\cite{amos18neurips-differentiable}. Despite success in numerous applications, local solvers can get stuck in bad local minima and heavily rely on high-quality initial guesses, which require significant engineering heuristics~\cite{romero2022tro-mpcc-timeoptimal-quadrotorflight} and can be difficult to obtain~\cite{wensing2023tro-optimizationbased-control-legged}. 

\textbf{Global Solver}. 
Under restrictive conditions (\eg global optimality attained in the convex hull of the feasible set), problem~\eqref{eq:intro:trajopt} can be equivalently solved as a convex optimization problem, known as lossless convexification~\cite{blackmore2012scl-lossless-convexification-nonlinearopt,malyuta2022ieee-convexopt-trajectorygeneration}. Further, if the only nonconvexity is combinatorial and can be modelled by integer variables (\eg in graph of convex neighbors/sets~\cite{gentilini13oms-travelling,marcucci24siopt-gcs}), then problem~\eqref{eq:intro:trajopt} can be solved by off-the-shelf mixed-integer programming solvers~\cite{deits2015icra-mixedintegerprogramming-uav,marcucci2020arxiv-warmstart-mixedinteger-mpc,marcucci23science-motion}, albeit the runtime is worst-case exponential. We focus on generic nonconvex trajectory optimization whose nonconvexity is not combinatorial and cannot be losslessly convexified.

\textbf{Certifiable Solver}. 
A generic recipe to design polynomial-time algorithms for solving nonconvex optimization while offering strong performance guarantees is through \emph{convex relaxation}, where a relaxed convex problem is solved to provide a \emph{lower bound} and a feasible solution of the nonconvex problem provides an \emph{upper bound}. The relative error between the lower and upper bounds provides a \emph{certificate of (sub)optimality} (to be made precise in \eqref{eq:suboptimality}). Lasserre's moment and sums-of-squares (SOS) hierarchy~\cite{lasserre2001siopt-global} --relaxing a polynomial optimization as a hierarchy of convex \emph{semidefinite programs} (SDPs) of increasing size-- is arguably the method of choice for designing convex relaxations, as it guarantees a certificate of suboptimality that converges to zero (then the relaxation is called \emph{tight} or \emph{exact}). Khadir \etal~\cite{khadir2021icra-piecewiselinear-motionplanning} applied the moment-SOS hierarchy to compute shortest piece-wise linear paths among obstacles without considering robot dynamics. Teng \etal~\cite{teng2023arxiv-geometricmotionplanning-liegroup} explored the Markovian property of dynamical systems and applied a \emph{sparse} variant~\cite{lasserre2006msc-correlativesparse,wang2022tms-cs-tssos} of the moment-SOS hierarchy to trajectory optimization and observed the \emph{second-order} SDP relaxation is empirically tight, echoing similar findings in perception~\cite{yang2022pami-outlierrobust-geometricperception}. Huang \etal~\cite{huang2024arxiv-sparsehomogenization} proposed sparse SDP relaxations with homogenization to allow unbounded feasible sets and solved simple trajectory optimization problems with \emph{tight} \emph{second-order} relaxation.  

\textbf{Computational Challenges}. Despite the ability to compute \emph{certifiably optimal} trajectories, the second-order (and higher) moment-SOS relaxation is notoriously expensive to solve, rendering its practicality in robotics questionable.\footnote{Tackling the second-order relaxation is necessary, because the first-order relaxation, unfortunately, is very loose in trajectory optimization, see~\cite{teng2023arxiv-geometricmotionplanning-liegroup} and \S\ref{app:sec:firstorder}.} Indeed, in~\cite{khadir2021icra-piecewiselinear-motionplanning,teng2023arxiv-geometricmotionplanning-liegroup,huang2024arxiv-sparsehomogenization}, 
the commercial solver \mosek~\cite{aps2019ugrm-mosek-sdpsolver} is chosen to solve SDPs due to its robustness and high accuracy. However, as a generic-purpose implementation of the interior point method~\cite{helmberg1996siam-interiorpoint-sdp}, \mosek~has three drawbacks. (\emph{i}) \mosek~has poor scalability due to high memory complexity and per-iteration time complexity. Even for small-scale problems like minimum-work block-moving~\cite{huang2024arxiv-sparsehomogenization} and inverted pendulum (\cf \S\ref{sec:exp}), \mosek's runtime is around 10 seconds. As problems get larger (\cf examples in \cite{teng2023arxiv-geometricmotionplanning-liegroup} and \S\ref{sec:exp}), it easily takes runtime in the order of hours and goes out of memory. (\emph{ii}) \mosek~cannot be warmstarted, which means its runtime online cannot be reduced even if many similar problems have been solved offline (\eg in an MPC setup~\cite{aydinoglu2023icra-realtime-multicontact-mpc-admm}). 
(\emph{iii}) It does not fully exploit the special structure in SDPs generated from sparse moment-SOS relaxations. 


\emph{Can the moment-SOS hierarchy ever be practical for trajectory optimization?}

We believe the answer is affirmative if and only if one can design a customized SDP solver that is scalable, exploits problem structures, and can be warmstarted.


\textbf{Contributions}. We present \nameshort (\underline{s}emidefinite \underline{tr}ajectory \underline{o}pti\underline{m}ization), a fast and certifiable trajectory optimization framework that checks the merits. 
\begin{enumerate}[label=(\Roman*)]
    \item \textbf{Faster sparse moment relaxation}. 
    Unlike existing tools in Matlab such as \sostools~\cite{prajna02cdc-sostools} and \yalmip~\cite{lofberg2004cacsd-yalmip} that
    generate sparse second-order SDP relaxations of problem~\eqref{eq:intro:trajopt} from the dual SOS perspective, we develop a C++ tool to generate SDP relaxations from the \emph{primal moment} perspective. Our POP-SDP conversion reveals the special \emph{chain-like sparsity pattern} in the relaxed multiple-cone SDP (\cf Fig.~\ref{fig:strom:popsdp:pop-sdp-conversion}). It is two orders of magnitude faster than \yalmip~and \sostools, and on par with the Julia package \tssos~\cite{wang2021siam-tssos}. 
    \item \textbf{Algorithmic backbone: \sgsadmm}. 
    We choose the \emph{symmetric Gauss-Seidel ADMM} (\sgsadmm)~\cite{chen2017mp-sgsadmm} as the algorithmic backbone to design a scalable SDP solver that can be warmstarted. As a first-order method~\cite{wen2010mp-admmsdp}, \sgsadmm inherits low per-iteration cost from ADMM 
    and offers empirical advantages in solving degenerate SDPs from high-order relaxations~\cite{yang2023mp-stride}. 
    \item \textbf{GPU speedup: \cuadmm}. At each iteration, \sgsadmm alternates between (a) performing projection onto positive semidefinite (PSD) cones, and (b) solving sparse linear systems. Due to the existence of many small-to-medium-scale PSD cones in the SDP relaxation,
    we engineer a highly optimized GPU-based implementation of \sgsadmm in CUDA, named \cuadmm and achieves up to {$10\times$} speedup compared to existing ADMM solvers such as \cdcs~\cite{zheng2017ifac-cdcs-sdpsolver} and \sdpnal~\cite{yang2015mp-sdpnalplus-sdpsolver}. \cuadmm is the first ADMM-based SDP solver that runs in GPUs and we demonstrate its ability to solve five trajectory optimization applications (inverted pendulum, cart-pole, vehicle landing, flying robot, and car back-in with obstacle avoidance): \cuadmm solves them with optimality certificates (suboptimality below $1\%$) in {minutes (when other solvers take hours and run out of memory) and seconds (when others take minutes)}.
    \item \textbf{Data-driven initialization: real-time certifiable optimization}. To show the potential to warmstart \cuadmm, we perform a case study in inverted pendulum: using a vanilla $k$-nearest neighbor search to initialize \cuadmm. The result is the first certifiably optimal swing-up of the pendulum computed in subseconds (\ie $0.66$ seconds with a below $1\%$ optimality certificate).
\end{enumerate}

{\bf Organization}. We present sparse moment relaxation in \S\ref{sec:strom:popsdp}, \sgsadmm in \S\ref{sec:strom:sgsadmm}, and \cuadmm in \S\ref{sec:strom:gpu}. We give numerical experiments in \S\ref{sec:exp} and conclude in \S\ref{sec:conclusion}.

{\bf Notation}. Given $\dvar=(\dvar_1,\dots,\dvar_d)$, a monomial is defined as $\dvar^\alpha:= \dvar_1^{\alpha_1} \cdot \cdots \dvar_d^{\alpha_d}$ for $\alpha \in \bbN^d$. The degree of a monomial is $\sum_{i=1}^d \alpha_i$. A polynomial in $\dvar$ is written as $f(\dvar):=\sum_{\alpha \in \bbN^d} c_\alpha \dvar^\alpha$ with real coefficients $c_\alpha$ and its degree, $\deg(f)$, is the maximum degree of the monomials. Let $\poly{\dvar}$ be the ring of polynomials, and $\poly{\dvar}_{n}$ be those with degree at most $n$. Denoting $[\dvar]_n$ as the vector of monomials of degree up to $n$, then $\poly{\dvar}_n$ can be identified as a vector space of dimension $s(d,n) = \left( \substack{n+d \\ d} \right)$ with a set of basis $[\dvar]_n$. Given an index set $I \subset [d]$, let $\poly{\dvar(I)}_n$ be the set of polynomials in the subset of variables $\dvar(I)$ of degree up to $n$. Denote $\symm{n}$ (resp. $\symmp{n}$) as the set of symmetric (resp. positive semidefinite) $n \times n$ matrices.

\section{Sparse Moment Relaxation}
\label{sec:strom:popsdp}

We first introduce a special type of sparsity in polynomial optimization known as \emph{chain-like} sparsity and show that problem~\eqref{eq:intro:trajopt} satisfies this pattern (\S\ref{sec:relax:sparse-pattern}). We then present the sparse moment-SOS hierarchy (\S\ref{sec:relax:hierarchy}), followed by how to convert it to a standard semidefinite program (SDP) (\S\ref{sec:relax:SDP}). 

To provide a tutorial-style exposition of the mathematical machinery, we use a toy trajectory optimization problem as the running example. 

\begin{example}[Trajectory Optimization of A 1-D Nonlinear System]
    \label{example:strom:popsdp:momentrelax-example}
    Consider the nonlinear dynamical system adapted from~\cite{slotine91book-applied}:
    \begin{align}
        \label{eq:strom:popsdp:momentrelax-example-dyn}
        \dot{x} = -(1 + u) x, \quad u \in [-1, 1].
    \end{align}
    Starting from $x_{\init} = 2$, our goal is to regulate~\eqref{eq:strom:popsdp:momentrelax-example-dyn} towards $x = 0$. We define a trajectory optimization problem with standard direct multiple shooting:
    \begin{subequations}\label{eq:strom:popsdp:momentrelax-example-trajopt}
        \begin{eqnarray}
            \min_{x_0,\dots,x_N;u_0,\dots,u_{N-1}} & \displaystyle P_f \cdot x_N^2 + \sum_{k=0}^{N-1} (u_k^2 + x_k^2) \\
            \subject & x_0 = x_{\init} \\
            & x_k = x_{k-1} - \dt \cdot (1 + u_{k-1}) x_{k-1}, \ k \in \seqordering{N} \\
            & 1 - u_{k-1}^2 \ge 0, \ k \in \seqordering{N}
        \end{eqnarray}
    \end{subequations}
    where $P_f>0$ is the terminal loss coefficient and $\Delta t$ is the time step size.\myendexample

    \subsection{Polynomial Optimization with Chain-like Sparsity}
    \label{sec:relax:sparse-pattern}

    A chain-like sparsity pattern is a special case of the correlative sparsity pattern~\cite{lasserre2006msc-correlativesparse,wang2022tms-cs-tssos}. Correlative sparsity corresponds to the variables of a POP forming a chordal graph~\cite{magron23book-sparse}; chain-like sparsity corresponds to a chain (or line) graph. 
    

    \begin{definition}[POP with Chain-like Sparsity]
        \label{def:pop-chain}
        Let $\dvar \in \Real{d}$ and $I_1,\dots,I_N \subset [d]$ be $N$ index sets such that 
        \begin{equation}\label{eq:strom:popsdp:index-chainrule}
            \bigcup_{k=1}^N I_k = [d] \quad \text{and} \quad 
            \left( \bigcup_{j=1}^{k-1} I_j \right) \bigcap I_k = I_{k-1} \bigcap I_k, \ \forall k \in \left\{ 2, \dots, N \right\},
        \end{equation}
        then the following POP is said to admit a chain-like sparsity pattern
        \begin{subequations}\label{eq:strom:popsdp:chain-sparse-pop}
            \begin{eqnarray}
                p^{\star} = \min_{\dvar \in \Real{d}} & \displaystyle \sum_{k=1}^{N} f_k(\dvar(I_k)) \label{eq:strom:popsdp:chain-sparse-pop-obj} \\
                \subject & g_{k, i}(\dvar(I_k)) \ge 0, \ \forall k \in \seqordering{N}, i \in \calG_k \\
                & h_{k, j}(\dvar(I_k)) = 0, \ \forall k \in \seqordering{N}, j \in \calH_k 
            \end{eqnarray}
        \end{subequations}
        where 
        $\calG_k$ (resp. $\calH_k$) indexes the inequality (resp. equality) constraints for variables $\dvar(I_k)$, and $f_k,g_{k,i},h_{h,j} \in \poly{\dvar(I_k)}$ are polynomials in the variables $\dvar(I_k)$.
    \end{definition}

In words, every polynomial constraint involves only a subset of the variables, known as a \emph{clique}, the objective can be decomposed as a sum of polynomials each involving a single clique, and the cliques satisfy the sparsity pattern \eqref{eq:strom:popsdp:index-chainrule}.

Observe that the toy problem~\eqref{eq:strom:popsdp:momentrelax-example-trajopt} in Example~\ref{example:strom:popsdp:momentrelax-example} satisfies the chain-like sparsity pattern. Indeed, set $z:=[x_0;u_0;\dots;u_{N-1};x_N]$, $I_k:=\{ 2k-1,2k,2k+1\}$ such that $\dvar(I_k)=(x_{k-1},u_{k-1},x_k)$, then~\eqref{eq:strom:popsdp:momentrelax-example-trajopt} is an instance of \eqref{eq:strom:popsdp:chain-sparse-pop} with 
    \begin{subequations}\label{eq:toy-as-sparse-pop}
        \begin{align}
            \hspace{-8mm} f_k(\dvar(I_k)) & = u_{k-1}^2 + x_{k-1}^2, k \in \seqordering{N-1}, 
            f_N(\dvar(I_N)) = u_{N-1}^2 + x_{N-1}^2 + P_f \cdot x_N^2 \\
            \hspace{-8mm} g_{k, 1}(\dvar(I_k)) & = 1 - u_{k-1}^2, \quad \calG_k = \left\{ 1 \right\}, \ k \in \seqordering{N} \\
            \hspace{-8mm} h_{k, 1}(\dvar(I_k)) & = x_k + (\dt - 1) \cdot x_{k-1} + \dt \cdot u_{k-1} x_{k-1}, \quad k \in \seqordering{N} \\
            \hspace{-8mm} h_{1, 2}(\dvar(I_1)) & = x_0 - 2, \quad \calH_1 = \left\{ 1, 2 \right\}, \calH_k = \left\{ 1 \right\}, \ k \in \left\{ 2,\dots,N \right\}. 
        \end{align}
    \end{subequations}
\end{example}

\vspace{-9mm}
\begin{figure}[htbp]
    \begin{center}
        \includegraphics[width=\textwidth]{./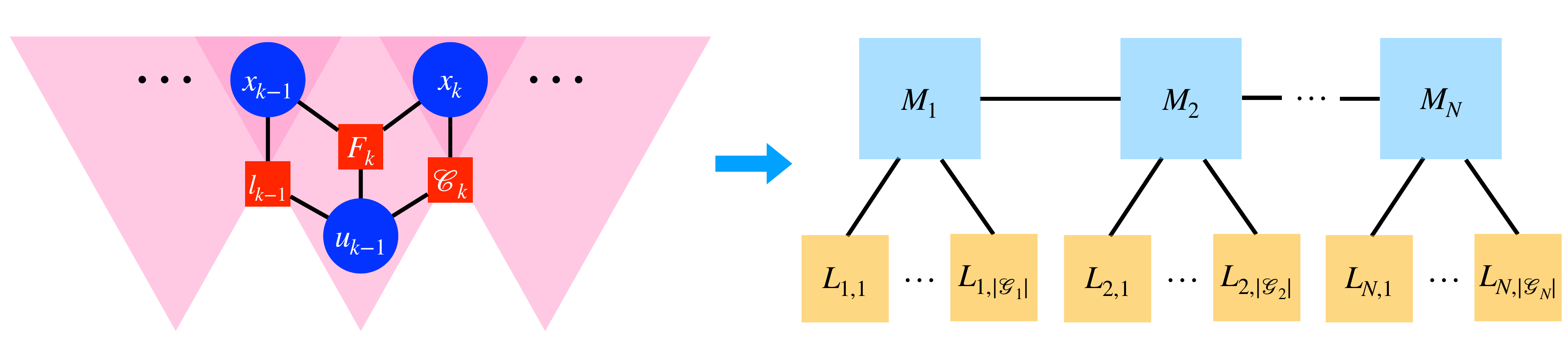}
    \end{center}
    \vspace{-6mm}
    \caption{Illustration of sparse moment relaxation. Left: POP with a chain-like sparsity pattern. Right: standard multi-block SDP.
    \label{fig:strom:popsdp:pop-sdp-conversion}}
\end{figure}
\vspace{-6mm}

\shucheng{Similarly, for the generic trajectory optimization problem~\eqref{eq:intro:trajopt}, Fig.~\ref{fig:strom:popsdp:pop-sdp-conversion} shows its factor graph represetation~\cite{dellaert2021arcras-factor-graph-robotics}. Each clique contains variables $\dvar(I_k) = \{x_{k-1},u_{k-1},x_k\} \in \Real{2d_x + d_u}$. These cliques form a chain-like graph. Each dynamics constraint $F_k$~\eqref{eq:intro:trajopt-generalform-dyn} and set constraint $\calC_k$~\eqref{eq:intro:trajopt-generalform-con} only involves $k$-th clique's variables, and the objective function~\eqref{eq:intro:trajopt-generalform} readily admits the decomposition in the form of~\eqref{eq:strom:popsdp:chain-sparse-pop-obj}.}


\subsection{The Sparse Moment-SOS Hierarchy}
\label{sec:relax:hierarchy}

Associated with every vector space (of points) is its dual vector space of linear functionals. Viewing $\poly{\dvar}_n$ as a vector space (which is isomorphic to $\Real{s(d,n)}$ after fixing the basis $[\dvar]_n$), its dual vector space $\poly{\dvar}_n^* \cong \Real{s(d,n)}$ contains linear functionals of polynomials. Particularly, given a sequence of numbers $\tms = (\tms_\alpha) \in \Real{s(d,n)}$ indexed by the (exponents of) monomials in $[\dvar]_n$, 
we define a one-to-one linear map from $\poly{\dvar}_{n}$ to $\reals$, known as the \emph{Riesz linear functional}
\begin{equation}
    \label{eq:strom:popsdp:linearfunctional}
    \ell_{\tms}: \ f(\dvar) = \sum_{\alpha} c_\alpha \dvar^\alpha \ \mapsto \ \sum_{\alpha} c_\alpha \tms_\alpha, \ \forall f(\dvar) \in \reals[\dvar]_{n}.
\end{equation}
We can easily extend the notation of $\ell_\tms$ from polynomials to polynomial vectors and matrices, as shown in the following example.
\begin{example}[Riesz Functional]
    Let $d = 3, I_1 = \left\{ 2,3 \right\}$. Then, $\basis{\dvar}{1} = [1; z_1; z_2; z_3]$ and $\basis{\dvar(I_1)}{2} = [1; z_2; z_3; z_2^2; z_2 z_3; z_3^2]$. Setting $n=3$, we have
    \begin{subequations}
         \begin{align}
            \hspace{-4mm} \ell_\tms\left( (z_2 - z_3) \cdot \basis{\dvar(I_1)}{2} \right) & = \ell_\tms\left(\begin{bmatrix}
            \dvar_2 - \dvar_3 \\
            \dvar_2^2 - \dvar_2 \dvar_3 \\
            \dvar_2\dvar_3 - \dvar_3^2 \\
            \dvar_2^3 - \dvar_2^2 \dvar_3 \\
            \dvar_2^2 \dvar_3 - \dvar_2 \dvar_3^2 \\
            \dvar_2\dvar_3^2 - \dvar_3^3
            \end{bmatrix}\right)
                =
                \begin{bmatrix}
                \tms_{0,1,0} - \tms_{0,0,1} \\
                \tms_{0,2,0} - \tms_{0,1,1} \\
                \tms_{0,1,1} - \tms_{0,0,2} \\
                \tms_{0,3,0} - \tms_{0,2,1} \\
                \tms_{0,2,1} - \tms_{0,1,2} \\
                \tms_{0,1,2} - \tms_{0,0,3}
            \end{bmatrix} \nonumber\\
            \hspace{-4mm} \ell_\tms\left( \basis{\dvar}{1} \basis{\dvar}{1}\tran \right)
            & = \ell_\tms\left( 
                \begin{bmatrix}
                    1 & z_1 & z_2 & z_3 \\
                    z_1 & z_1^2 & z_1 z_2 & z_1 z_3 \\
                    z_2 & z_2 z_1 & z_2^2 & z_2 z_3 \\
                    z_3 & z_3 z_1 & z_3 z_2 & z_3^2
                \end{bmatrix}
            \right) =
            \begin{bmatrix}
                \tms_{0,0,0} & \tms_{1,0,0} & \tms_{0,1,0} & \tms_{0,0,1} \\
                \tms_{1,0,0} & \tms_{2,0,0} & \tms_{1,1,0} & \tms_{1,0,1} \\
                \tms_{0,1,0} & \tms_{1,1,0} & \tms_{0,2,0} & \tms_{0,1,1} \\
                \tms_{0,0,1} & \tms_{1,0,1} & \tms_{0,1,1} & \tms_{0,0,2}
            \end{bmatrix} \nonumber
        \end{align}
    \end{subequations}
    where the application of $\ell_\tms$ is element-wise. \myendexample
\end{example}

We are ready to present the sparse moment-SOS hierarchy for the POP~\eqref{eq:strom:popsdp:chain-sparse-pop}.

\begin{proposition}[Sparse Moment Relaxation]
    In~\eqref{eq:strom:popsdp:chain-sparse-pop}, denote $\degf{k}:=\deg(f_k)$, $\degg{k,i}:=\ceil{\deg(g_{k,i})/2}$, and $\degh{k,j}:=\deg(h_{k,j})$. Given a positive integer $\kappa$ such that
    \begin{align}
        \label{eq:strom:popsdp:kappa-lowerbound}
        2\kappa \ge 2\kappa_0 = \max\{ \{ \degf{k} \}_{k \in \seqordering{N}}, \ \{ 2\degg{k,i} \}_{k \in \seqordering{N}, i \in \calG_k}, \ \{ \degh{k,j} \}_{k \in \seqordering{N}, j \in \calH_k} \},
    \end{align} 
    the $\kappa$-th order sparse moment relaxation for problem~\eqref{eq:strom:popsdp:chain-sparse-pop} reads 
        \begin{subequations}
            \label{eq:strom:popsdp:sparse-momentrelax}
            \begin{eqnarray}
                p_\kappa^\star = \min_{\tms \in \Real{s(d, 2\kappa)}} & \displaystyle \sum_{k=1}^{N} \ell_\tms \left( f_k(\dvar(I_k)) \right) \label{eq:strom:popsdp:sparse-momentrelax-objective} \\
                \subject & \tms_0 = 1, \quad \text{and} \quad \forall k \in \seqordering{N}, i \in \calG_k, j \in \calH_k: \label{eq:strom:popsdp:sparse-momentrelax-normalize} \\
                & M_k := \ell_\tms\left( \basis{\dvar(I_k)}{\kappa} \basis{\dvar(I_k)}{\kappa}\tran \right) \succeq 0, \label{eq:strom:popsdp:sparse-momentrelax-moment} \\
                & L_{k,i} := \ell_\tms \left( g_{k,i}(\dvar(I_k)) \cdot \basis{\dvar(I_k)}{\kappa - \degg{k,i}} \basis{\dvar(I_k)}{\kappa - \degg{k,i}}\tran \right) \succeq 0, \label{eq:strom:popsdp:sparse-momentrelax-inequality} \\
                & \ell_\varphi \left( h_{k,j}(\dvar(I_k)) \cdot \basis{\dvar(I_k)}{2\kappa - \degh{k,j}} \right) = 0. \label{eq:strom:popsdp:sparse-momentrelax-equality}
            \end{eqnarray}
        \end{subequations}
\end{proposition}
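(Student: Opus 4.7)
The plan is to establish the relaxation property $p_\kappa^\star \le p^\star$ (the content that justifies calling~\eqref{eq:strom:popsdp:sparse-momentrelax} a \emph{relaxation} of~\eqref{eq:strom:popsdp:chain-sparse-pop}), since the rest of the statement merely writes down the program. The route is the standard construction: any feasible point of the POP induces a feasible moment sequence of~\eqref{eq:strom:popsdp:sparse-momentrelax} that attains the same objective value, whence infimization gives the inequality.

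First, I would fix an arbitrary feasible $\dvar^\star \in \Real{d}$ of~\eqref{eq:strom:popsdp:chain-sparse-pop} and define the \emph{evaluation moment sequence} $\tms = (\tms_\alpha)_{\abs{\alpha} \le 2\kappa}$ by $\tms_\alpha := (\dvar^\star)^\alpha$. A direct consequence of~\eqref{eq:strom:popsdp:linearfunctional} is that $\ell_{\tms}(p) = p(\dvar^\star)$ for every $p \in \poly{\dvar}_{2\kappa}$, and the same identity lifts entrywise to polynomial vectors and matrices. I would then verify each constraint of~\eqref{eq:strom:popsdp:sparse-momentrelax} in turn:
\begin{itemize}
\item $\tms_0 = (\dvar^\star)^{\mathbf{0}} = 1$;
\item $M_k = \basis{\dvar^\star(I_k)}{\kappa}\basis{\dvar^\star(I_k)}{\kappa}\tran \succeq 0$, as a rank-one outer product;
\item $L_{k,i} = g_{k,i}(\dvar^\star(I_k)) \cdot \basis{\dvar^\star(I_k)}{\kappa - \degg{k,i}}\basis{\dvar^\star(I_k)}{\kappa - \degg{k,i}}\tran \succeq 0$, since $g_{k,i}(\dvar^\star(I_k)) \ge 0$ scales a PSD rank-one matrix;
\item $\ell_{\tms}\bigl( h_{k,j}(\dvar(I_k)) \cdot \basis{\dvar(I_k)}{2\kappa - \degh{k,j}} \bigr) = h_{k,j}(\dvar^\star(I_k)) \cdot \basis{\dvar^\star(I_k)}{2\kappa - \degh{k,j}} = 0$, since $h_{k,j}(\dvar^\star(I_k)) = 0$.
\end{itemize}
Finally, the objective agrees: $\sum_{k} \ell_{\tms}(f_k(\dvar(I_k))) = \sum_{k} f_k(\dvar^\star(I_k))$, which is exactly~\eqref{eq:strom:popsdp:chain-sparse-pop-obj} evaluated at $\dvar^\star$. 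Taking the infimum over all feasible $\dvar^\star$ gives $p_\kappa^\star \le p^\star$.

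The only non-routine step is to check that the construction above is \emph{well-defined}, i.e., that every moment $\tms_\alpha$ invoked by~\eqref{eq:strom:popsdp:sparse-momentrelax-moment}--\eqref{eq:strom:popsdp:sparse-momentrelax-equality} has $\abs{\alpha} \le 2\kappa$. This is precisely where the lower bound~\eqref{eq:strom:popsdp:kappa-lowerbound} enters: the entries of $M_k$ have degree at most $2\kappa$; the entries of $L_{k,i}$ have degree at most $2\degg{k,i} + 2(\kappa - \degg{k,i}) = 2\kappa$; and the entries of the equality vector have degree at most $\degh{k,j} + (2\kappa - \degh{k,j}) = 2\kappa$. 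I would also note that restricting $M_k$, $L_{k,i}$ and the equality constraints to moments indexed by the clique $I_k$ (rather than all of $[d]$) is consistent precisely because $f_k, g_{k,i}, h_{k,j}$ depend only on $\dvar(I_k)$, which is the definitional feature~\eqref{eq:strom:popsdp:index-chainrule} of chain-like sparsity. No deeper obstacle arises; the bookkeeping of cliques and degrees is the main item that needs careful accounting, and after it, the relaxation property follows from the elementary fact that evaluation at a feasible point produces a feasible moment sequence with matching cost.
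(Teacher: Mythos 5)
Your proposal is correct and matches the paper's own (informal) justification: the paper argues exactly that the polynomial matrices and vectors inside $\ell_\tms(\cdot)$ are PSD, respectively vanish, at every feasible point of the POP, so that moments of measures supported on the feasible set satisfy~\eqref{eq:strom:popsdp:sparse-momentrelax-normalize}--\eqref{eq:strom:popsdp:sparse-momentrelax-equality}, and your argument is the Dirac-measure specialization of this, with the same degree bookkeeping showing~\eqref{eq:strom:popsdp:kappa-lowerbound} makes every entry a moment of order at most $2\kappa$. Nothing further is needed.
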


Problem~\eqref{eq:strom:popsdp:sparse-momentrelax} is a convex optimization whose variable is the sequence $\tms$. The sequence $\tms$ is called the \emph{truncated moment sequence} (TMS) because it can be interpreted as the moments 
of a probability measure supported on the feasible set of the POP~\eqref{eq:strom:popsdp:chain-sparse-pop}. Indeed, the nonconvex POP is equivalent to an infinite-dimensional linear optimization over the space of probability measures~\cite{lasserre2001siopt-global}, and the TMS $\tms$ is a finite-dimensional relaxation of the measure. With this perspective, the constraints of~\eqref{eq:strom:popsdp:sparse-momentrelax} are \emph{necessary} conditions for $\tms$ to admit a \emph{representing measure}, \ie $\tms_\alpha = \int \dvar^\alpha d\mu$ for some measure $\mu$. (\emph{i}) Its zero-order moment equals to $1$ (\cf~\eqref{eq:strom:popsdp:sparse-momentrelax-normalize}); (\emph{ii}) the \emph{moment matrix} $M_k \in \symm{s(|I_k|, \kappa)}$ and the \emph{localizing matrix} $L_{k,i} \in \symm{s(|I_k|, \kappa - \degg{k,i})}$ must be positive semidefinite (\cf~\eqref{eq:strom:popsdp:sparse-momentrelax-moment} and~\eqref{eq:strom:popsdp:sparse-momentrelax-inequality}, due to the fact the polynomial matrices inside $\ell_{\tms}(\cdot)$ are PSD for any $z$ feasible for the POP); and (\emph{iii}) the \emph{localizing vector} must vanish (\cf~\eqref{eq:strom:popsdp:sparse-momentrelax-equality}, due to the polynomial vector inside $\ell_{\tms}(\cdot)$ vanishes for any $z$ feasible for the POP). 

The power of the sparse moment relaxation~\eqref{eq:strom:popsdp:sparse-momentrelax} is twofold. First, every relaxation generates a lower bound $p^\star_\kappa \leq p^\star$ and the lower bound increasingly converges to $p^\star$. Second, the convergence can be detected and certified, precisely, due to \emph{sufficient} conditions on $\tms$ admitting a representing measure.

\vspace{-1mm}
\begin{theorem}[Convergence of Sparse Moment Relaxations~\cite{lasserre2006msc-correlativesparse}]
    \label{thm:strom:popsdp:convergence-sparsemoment}
    Suppose $\forall k \in \seqordering{N}$, $\exists R_k > 0$ such that $\norm{\dvar(I_k)}_\infty \le R_k$. Then:
    \begin{enumerate}[label=(\roman*)]
        \item Problem~\eqref{eq:strom:popsdp:sparse-momentrelax} is \emph{solvable} for any $\kappa \geq \kappa_0$. Let $M_{k}^\star$ be an optimal solution. 
        \item $p_\kappa^\star \le p^\star, \forall \kappa \ge \kappa_0$. Moreover, $p^\star_\kappa \rightarrow p^\star$, as $\kappa \rightarrow \infty$.
        \item \label{item:rank} If under some relaxation order $\kappa$, it holds that $\rank{M_{k}^\star} = 1, \forall k \in \seqordering{N}$. Then $p^\star_\kappa = p^\star$, \ie the relaxation is tight.
    \end{enumerate}
\end{theorem}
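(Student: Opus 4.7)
The three items call for three different arguments. For \emph{(i)}, the plan is to show the feasible set of~\eqref{eq:strom:popsdp:sparse-momentrelax} is a nonempty compact subset of $\Real{s(d,2\kappa)}$, so that the linear objective~\eqref{eq:strom:popsdp:sparse-momentrelax-objective} attains its minimum. Nonemptiness is automatic: any POP-feasible $\dvar$ yields $\tms_\alpha := \dvar^\alpha$ that satisfies all PSD and linear constraints (see \emph{(ii)} below). For compactness, the hypothesis $\norm{\dvar(I_k)}_\infty \le R_k$ allows us to append the redundant localizing constraints $R_k^2 - \dvar_i^2 \ge 0$ (for $i \in I_k$) without changing $p^\star$; combining these with the moment-matrix PSD conditions~\eqref{eq:strom:popsdp:sparse-momentrelax-moment} and the localizing-matrix conditions~\eqref{eq:strom:popsdp:sparse-momentrelax-inequality} for the ball constraints bounds every $\abs{\tms_\alpha}$ with $\abs{\alpha}\le 2\kappa$ by $\max_k R_k^{2\kappa}$, via a standard moment-induction argument that first bounds the diagonal moments $\tms_{2\alpha}$ and then uses the $2\times 2$ PSD minors of $M_k$ (Cauchy--Schwarz) to bound the off-diagonal entries.

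For \emph{(ii)}, weak duality $p_\kappa^\star \le p^\star$ follows from the construction just described: for any POP-feasible $\dvar$, the moment matrix becomes $M_k = \basis{\dvar(I_k)}{\kappa}\basis{\dvar(I_k)}{\kappa}\tran \succeq 0$ (rank one), the localizing matrix becomes $g_{k,i}(\dvar(I_k)) M_k \succeq 0$ since $g_{k,i}(\dvar(I_k)) \ge 0$, and the localizing vector vanishes since $h_{k,j}(\dvar(I_k)) = 0$; the objective values match. The convergence $p_\kappa^\star \to p^\star$ is the main obstacle, and is the step where I would invoke the \emph{sparse Putinar Positivstellensatz} of Lasserre~\cite{lasserre2006msc-correlativesparse}: the running intersection property~\eqref{eq:strom:popsdp:index-chainrule} together with the clique-wise Archimedeanness supplied by the redundant ball constraints above guarantees that for every $\epsilon > 0$, the polynomial $\sum_k f_k - p^\star + \epsilon$ admits a decomposition as a sum over $k$ of elements of the quadratic module generated by $\{g_{k,i},\pm h_{k,j}\}$ on clique $I_k$. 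Applying $\ell_\tms$ term-by-term to this certificate and invoking~\eqref{eq:strom:popsdp:sparse-momentrelax-moment}--\eqref{eq:strom:popsdp:sparse-momentrelax-equality} shows that any feasible $\tms$ achieves objective at least $p^\star - \epsilon$ once $\kappa$ is large enough to accommodate the degrees in the certificate, whence $\liminf_\kappa p_\kappa^\star \ge p^\star$.

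For \emph{(iii)}, suppose $\rank{M_k^\star} = 1$ for every $k$. Writing $M_k^\star = v_k v_k\tran$, the normalization $\tms_0^\star = 1$ together with the identification $M_k^\star = \ell_{\tms^\star}(\basis{\dvar(I_k)}{\kappa}\basis{\dvar(I_k)}{\kappa}\tran)$ forces $v_k = \basis{\dvar_k^\star}{\kappa}$ for a unique point $\dvar_k^\star \in \Real{\abs{I_k}}$, so every entry of $M_k^\star$ is literally a monomial evaluated at $\dvar_k^\star$. Because all moment matrices are read off from the single sequence $\tms^\star$, variables indexed by $I_{k-1} \cap I_k$ receive identical first-order moments from both $M_{k-1}^\star$ and $M_k^\star$; under rank one these first-order moments coincide with the point values, so $\dvar_{k-1}^\star$ and $\dvar_k^\star$ agree on $I_{k-1} \cap I_k$. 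The chain property~\eqref{eq:strom:popsdp:index-chainrule} then lets us assemble a single global $\dvar^\star \in \Real{d}$ with $\dvar^\star(I_k) = \dvar_k^\star$. Substituting $M_k^\star = v_k v_k\tran$ into~\eqref{eq:strom:popsdp:sparse-momentrelax-inequality} and~\eqref{eq:strom:popsdp:sparse-momentrelax-equality} collapses them to $g_{k,i}(\dvar^\star(I_k)) \ge 0$ and $h_{k,j}(\dvar^\star(I_k)) = 0$, so $\dvar^\star$ is POP-feasible with value $\sum_k f_k(\dvar^\star(I_k)) = p_\kappa^\star \ge p^\star$; combined with the weak duality from \emph{(ii)} this forces $p_\kappa^\star = p^\star$.
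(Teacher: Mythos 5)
The paper does not actually prove Theorem~\ref{thm:strom:popsdp:convergence-sparsemoment}; it is quoted from Lasserre's correlative-sparsity paper, so the only fair comparison is with the standard argument there --- and your outline is essentially that standard argument: lifting feasible points to rank-one moment/localizing matrices for $p^\star_\kappa \le p^\star$, the sparse Putinar Positivstellensatz under the running intersection property (which is exactly what the chain condition~\eqref{eq:strom:popsdp:index-chainrule} gives) for $p^\star_\kappa \to p^\star$, and the rank-one/Dirac extraction with consistency on clique overlaps for item (iii). Your item (iii) is correct and complete in outline: rank one plus the moment structure forces each $M_k^\star$ to be the lifting of a point $\dvar_k^\star$, the shared low-order moments force agreement on $I_{k-1}\cap I_k$, the chain condition lets you glue a global $\dvar^\star$, and PSD-ness of the localizing matrices plus the vanishing localizing vectors give feasibility, closing the sandwich with (ii). (Minor terminology: the inequality $p^\star_\kappa \le p^\star$ is not weak duality but simply feasibility of lifted points in the relaxation.)

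The one point you should be more careful about is the Archimedean issue in items (i) and (ii). You propose to ``append the redundant localizing constraints $R_k^2 - \dvar_i^2 \ge 0$ without changing $p^\star$'', and then argue compactness of the feasible moment sequences and invoke the sparse Positivstellensatz with clique-wise Archimedeanness ``supplied by the redundant ball constraints.'' But appending constraints adds new localizing matrices~\eqref{eq:strom:popsdp:sparse-momentrelax-inequality} and therefore changes the SDP~\eqref{eq:strom:popsdp:sparse-momentrelax} itself: what you prove is solvability and convergence of the \emph{augmented} hierarchy, not of the literal relaxation built only from the original $g_{k,i}, h_{k,j}$. Knowing merely that POP-feasible points satisfy $\norm{\dvar(I_k)}_\infty \le R_k$ does not make the quadratic modules generated by the original constraints Archimedean, does not bound the feasible set of the unaugmented SDP (so attainment in (i) can fail), and Putinar-type representations can fail for compact sets whose describing constraints are not Archimedean. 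This is precisely why Lasserre's sparse theorem is stated with the ball (or box) constraints included clique-wise in the description --- and why the paper's applications always carry explicit bound constraints such as $u_{\max}^2-u_k^2\ge 0$ or the unit-sphere constraints. So either state that the ball constraints are assumed to be among the $g_{k,i}$ (the reading intended by the paper), or present the theorem for the augmented relaxation; as written, your proof does not establish the statement for the literal SDP under the literal hypothesis.
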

\vspace{-1mm}
Theorem~\ref{thm:strom:popsdp:convergence-sparsemoment} provides a way to check the tightness of the relaxation and extract optimal solutions $\dvar^\star$ of the nonconvex POP~\eqref{eq:strom:popsdp:chain-sparse-pop}. In fact, condition~\ref{item:rank} guarantees the optimal TMS $\tms^\star$ admits a representing Dirac measure supported on the unique optimal solution $\dvar^\star$. Therefore, one can extract $\dvar^\star$ from the entries of $\tms^\star$ corresponding to degree-one monomials. 
Note that condition~\ref{item:rank} is a special case of the \emph{flat extension condition}~\cite{magron23book-sparse,nie2023siopt-moment-momentpolynomialopt,curto2005truncated}, which states that even when the rank of $M_k^\star$ is larger than one, say $\rank{M_k^\star}=r$, as long as $r$ is consistent with the rank of a submatrix of $M_k^\star$ and the restriction of moment matrices to each intersection of cliques is rank-one, then global optimality can be certified and exactly $r$ minimizers of the POP can be extracted. We do not detail this setup because in our applications $\rank{M_k^\star}$ is always one (up to numerical precision).

{\bf Extracting Near-Optimal Solutions}. When condition~\ref{item:rank} does not hold, we can use a template three-step heuristic to extract a near-optimal solution~\cite{yang2023mp-stride}. In step one, we perform spectral decomposition of $M_k^\star$ and let $v_k$ be the eigenvector of the largest eigenvalue. In step two, we normalize $v_k$ by $v_k \leftarrow \frac{v_k}{v_k(1)}$ (so its leading entry is $1$, just like a TMS) and take the entries corresponding to the degree-one monomials of $v_k$, denoted as $\bar{z}$. In step three, we perform local optimization of the POP~\eqref{eq:strom:popsdp:chain-sparse-pop} starting from the (potentially infeasible) initialization $\bar{z}$ (\eg using Matlab \fmincon) and obtain a local minimizer $\hat{z}$.\footnote{We note that finding a feasible solution of a nonconvex POP is in general NP-hard. However, in all our experiments, we found that when the convex optimization~\eqref{eq:strom:popsdp:sparse-momentrelax} is solved to sufficient accuracy, finding a local minimizer is always successful.} Denote as $\hat{p}$ the POP objective~\eqref{eq:strom:popsdp:chain-sparse-pop-obj} evaluated at $\hat{z}$, we compute the \emph{certificate of suboptimality} 
\bea\label{eq:suboptimality}
\subopt_\kappa = \frac{\hat{p} - p^\star_\kappa}{1 + |\hat{p}| + |p^\star_\kappa|} \geq 0.
\eea 
An $\subopt_\kappa$ that is close to zero certifies the global optimality of $\hat{z}$. For example, if $\subopt_\kappa=1\%$, then it certifies $\hat{z}$ is a feasible POP solution that attains a cost $\hat{p}$ \emph{at most} $1\%$ larger than the unknown global minimum $p^\star$. 

We remark that associated with each moment relaxation~\eqref{eq:strom:popsdp:sparse-momentrelax} is its \emph{dual sums-of-squres} (SOS) relaxation (and strong duality holds), hence the name moment-SOS hierarchy. We omit the dual SOS relaxation because the moment relaxation enables extraction of near-optimal solutions and certificate of tightness.



\subsection{Faster Conversion to Standard SDP}
\label{sec:relax:SDP}

Solving the moment relaxation~\eqref{eq:strom:popsdp:sparse-momentrelax} is typically done by converting~\eqref{eq:strom:popsdp:sparse-momentrelax} into a standard linear conic optimization problem in the format of \sedumi~\cite{sturm1999oms-sedumi-sdpsolver} or \mosek~\cite{aps2019ugrm-mosek-sdpsolver}. The standard way to convert~\eqref{eq:strom:popsdp:sparse-momentrelax} into a conic program follows~\cite[\S5.7.1]{Yang2024book-sdp} (\eg as in \tssos, \sostools, \yalmip), which will generate a conic program with not only PSD variables but also free (unconstrained) variables. 

We advocate for a different conversion standard~\cite[\S5.7.2]{Yang2024book-sdp} that generates \emph{only} PSD variables and the resulting semidefinite program admits the same chain-like structure as the POP (\cf Fig.~\ref{fig:strom:popsdp:pop-sdp-conversion}). 


Focusing on the sparse moment relaxation~\eqref{eq:strom:popsdp:sparse-momentrelax}, 
denote 
\bea\label{eq:def-X}
X:=( \{ M_k \}_{k \in \seqordering{N}}, \{ L_{k,i} \}_{k \in \seqordering{N}, i \in \calG_k} )
\eea
as the tuple of all moment and localizing matrices,
which lives in a vector space $\cartsymm$ that is the Cartesian product of $\{ \symm{s(|I_k|,\kappa)} \}_{k \in \seqordering{N}}$ and $\{ \symm{s(|I_k|,\kappa-\degg{k,i})} \}_{k \in \seqordering{N}, i \in \calG_k}$. Let $\cartsymmp \subset \cartsymm$ be the cone containing tuples $X$ whose elements are PSD, we claim the moment relaxation~\eqref{eq:strom:popsdp:sparse-momentrelax} can be written as a standard multi-block SDP~\cite{tutuncu2003mp-sdpt3-sdpsolver}
\begin{equation}
    \label{eq:strom:popsdp:standardsdp}
    p^\star_\kappa = \min_X \left\{ 
        \inprod{C}{X} \mymid \calA(X) = b, \ X \in \cartsymmp
        \right\} 
\end{equation}
for some $b \in \Real{m}$, $C \in \cartsymm$ and linear map $\calA(X):=( \inprod{A_i}{X} )_{i \in \seqordering{m}}$ with $A_i \in \cartsymm$. The inner product in the space $\cartsymm$ is defined element-wise. 

Without detailing the conversion from~\eqref{eq:strom:popsdp:sparse-momentrelax} to \eqref{eq:strom:popsdp:standardsdp} in full generality (which we implement in C++), we present the high-level idea using Example~\eqref{eq:strom:popsdp:momentrelax-example-trajopt}.


{\bf Linear Objective $\inprod{C}{X}$}. Recall the $k$-th clique contains variables $\dvar(I_k)=(x_{k-1},u_{k-1},x_k)$. Set $k=2$, with relaxation order $\kappa=2$, the moment matrix (recall it is symmetric and hence we only write the upper triangular part)
\begingroup
\fontsize{6pt}{0pt}\selectfont
\bea \label{eq:toy-M2}
M_2 = \ell_\tms\left( 
            \begin{bmatrix}
                \red{1} & x_1 & u_1 & \red{x_2} & x_1^2 & x_1 u_1 & x_1 x_2 & u_1^2 & u_1 x_2 & \red{x_2^2} \\
                * & x_1^2 & x_1 u_1 & x_1 x_2 & x_1^3 & x_1^2 u_1 & x_1^2 x_2 & x_1 u_1^2 & \blue{x_1 u_1 x_2} & x_1 x_2^2 \\
                * & * & u_1^2 & u_1 x_2 & x_1^2 u_1 & x_1 u_1^2 & \blue{x_1 u_1 x_2} & u_1^3 & u_1^2 x_2 & u_1 x_2^2 \\
                * & * & * & \red{x_2^2} & x_1^2 x_2 & \blue{x_1 u_1 x_2} & x_1 x_2^2 & u_1^2 x_2 & u_1 x_2^2 & \red{x_2^3} \\
                * & * & * & * & x_1^4 & x_1^3 u_1 & x_1^3 x_2 & x_1^2 u_1^2 & x_1^2 u_1 x_2 & x_1^2 x_2^2 \\
                * & * & * & * & * & x_1^2 u_1^2 & x_1^2 u_1 x_2 & x_1 u_1^3 & x_1 u_1^2 x_2 & x_1 u_1 x_2^2 \\
                * & * & * & * & * & * & x_1^2 x_2^2 & x_1 u_1^2 x_2 & x_1 u_1^2 x_2 & x_1 x_2^3 \\
                * & * & * & * & * & * & * & u_1^4 & u_1^3 x_2 & u_1^2 x_2^2 \\
                * & * & * & * & * & * & * & * & u_1^2 x_2^2 & u_1 x_2^3 \\
                * & * & * & * & * & * & * & * & * & \red{x_2^4}
            \end{bmatrix}
\right) 
\eea
\endgroup
has rows (columns) indexed by $[\dvar(I_2)]_2$, \ie the vector of monomials in $\dvar(I_2)$ of degree up to $2$ (the first row in~\eqref{eq:toy-M2}). 
Clearly, $M_2$ contains the monomials in $\dvar(I_2)$ of degree up to $4$, and hence the objective $f_2(\dvar(I_2))$ in~\eqref{eq:toy-as-sparse-pop}, a polynomial of degree $2$, can be written as $\inprod{C_2}{M_2}$ for some matrix $C_2$. This procedure can be done for every $k \in [N]$, thus, the objective of \eqref{eq:strom:popsdp:momentrelax-example-trajopt} can be written as $\sum_{k=1}^N \inprod{C_k}{M_k}$, or compactly $\inprod{C}{X}$ (recall $X$ contains all moment matrices~\eqref{eq:def-X}).

{\bf Linear Constraints $\calA(X)=b$}. We decompose $\calA(X)=b$ into four types.
\begin{enumerate}
    \item Moment constraints $\calA_{\text{mom}}$. Due to the definition of the moment matrix $M_k$~\eqref{eq:strom:popsdp:sparse-momentrelax-moment}, a single monomial can appear multiple times. For example in~\eqref{eq:toy-M2}, we have $M_2(2, 9) = M_2(3, 7) = M_2(4, 6)$ (highlighted in blue).
    \item Inequality constraints $\calA_{\text{ineq}}$. Each element in $L_{k,i}$ can be expressed as a linear combination of elements in $M_k$. For example~\eqref{eq:strom:popsdp:momentrelax-example-trajopt}, we can write down the localizing matrix generated by the inequality constraint $1-u_1^2\geq 0$: 
            \begin{align}
                L_{2,1} & = \ell_\tms \left( 
                    (1 - u_1^2) \cdot \basis{\dvar(I_2)}{1} \basis{\dvar(I_2)}{1}\tran
                \right) \nonumber\\
                & = \ell_\tms \left( 
                    \begin{bmatrix}
                        1 - u_1^2 & x_1 - x_1 u_1^2 & u_1 - u_1^3 & x_2 - u_1^2 x_2 \\
                        * & x_1^2 - x_1^2 u_1^2 & x_1 u_1 - x_1 u_1^3 & x_1 x_2 - x_1 u_1^2 x_2 \\
                        * & * & u_1^2 - u_1^4 & u_1 x_2 - u_1^3 x_2 \\
                        * & * & * & x_2^2 - u_1^2 x_2^2 
                    \end{bmatrix}
                 \right)
            \end{align}
    and observe the linear constraints
    \begin{equation}
        L_{2, 1}(1, 1) = M_2(1, 1) - M_2(3, 3), \quad  L_{2, 2}(1, 2) = M_2(1, 2) - M_2(3, 6),
    \end{equation}
    which can be repeated for every entry of $L_{2,1}$.
    \item Equality constraints $\calA_{\text{eq}}$. Each linear constraint in~\eqref{eq:strom:popsdp:sparse-momentrelax-equality} can be expressed by a linear combination of elements in $M_k$. For example, the linear constraints generated from $h_{2,1}(\dvar(I_2)) = 0$ are:
    \begin{align}
        \ell_\tms \left( 
            \left( x_2 + (\dt - 1) \cdot x_1 + \dt \cdot x_1 u_1 \right) \cdot \basis{\dvar(I_2)}{2} 
         \right) = 0
    \end{align} 
    which leads to $10$ linear constraints in $\calA_{\text{eq}}$. The first one of them is
    \begin{align}
        \ell_\tms \left( 
                x_2 + (\dt - 1) \cdot x_1 + \dt \cdot x_1 u_1
         \right) = 0
    \end{align} 
    which leads to the constraint
    \begin{equation}
            M_2(1, 4) + (\dt - 1) \cdot M_2(1, 2) + \dt \cdot M_2(2, 3) = 0.
    \end{equation}
    \item Consensus constraints $\calA_{\text{sen}}$. Since $I_k$ and $I_{k+1}$ have overlapping elements, two adjacent moment matrices $M_k$ and $M_{k+1}$ also have overlapping elements. For example~\eqref{eq:strom:popsdp:momentrelax-example-trajopt}, we can write down the third moment matrix
    \begingroup
    \fontsize{6pt}{0pt}\selectfont
    \begin{equation}
        M_3 = \ell_\tms\left( 
            \begin{bmatrix}
                \red{1} & \red{x_2} & u_2 & x_3 & \red{x_2^2} & x_2 u_2 & x_2 x_3 & u_2^2 & u_2 x_3 & x_3^2 \\
                * & \red{x_2^2} & x_2 u_2 & x_2 x_3 & \red{x_2^3} & x_2^2 u_2 & x_2^2 x_3 & x_2 u_2^2 & x_2 u_2 x_3 & x_2 x_3^2 \\
                * & * & u_2^2 & u_2 x_3 & x_2^2 u_2 & x_2 u_2^2 & x_2 u_2 x_3 & u_2^3 & u_2^2 x_3 & u_2 x_3^2 \\
                * & * & * & x_3^2 & x_2^2 x_3 & x_2 u_2 x_3 & x_2 x_3^2 & u_2^2 x_3 & u_2 x_3^2 & x_3^3 \\
                * & * & * & * & \red{x_2^4} & x_2^3 u_2 & x_2^3 x_3 & x_2^2 u_2^2 & x_2^2 u_2 x_3 & \blue{x_2^2 x_3^2} \\
                * & * & * & * & * & x_2^2 u_2^2 & x_2^2 u_2 x_3 & x_2 u_2^3 & x_2 u_2^2 x_3 & x_2 u_2 x_3^2 \\
                * & * & * & * & * & * & \blue{x_2^2 x_3^2} & x_2 u_2^2 x_3 & x_2 u_2^2 x_3 & x_2 x_3^3 \\
                * & * & * & * & * & * & * & u_2^4 & u_2^3 x_3 & u_2^2 x_3^2 \\
                * & * & * & * & * & * & * & * & u_2^2 x_3^2 & u_2 x_3^3 \\
                * & * & * & * & * & * & * & * & * & x_3^4
            \end{bmatrix}
        \right)
\end{equation}
\endgroup
and observe that $M_2$ and $M_3$ share the same monomials highlighted in red:
\begin{subequations}
    \begin{align}
        & M_2(1, 1) = M_3(1, 1), M_2(1, 4) = M_3(1, 2), M_2(4, 4) = M_3(2, 2) \\
        & M_2(4, 10) = M_3(2, 5), M_2(10, 10) = M_3(5, 5).
    \end{align}
\end{subequations}
\end{enumerate} 

In summary, our conversion generates an SDP whose structure is illustrated on the right-hand side of Fig.~\ref{fig:strom:popsdp:pop-sdp-conversion}, where the solid lines connect two PSD variables if and only if there exist linear constraints between them.
{In \S\ref{app:sec:conversion}, we show our conversion package is the only one capable of real-time conversion in Matlab, compared to \sostools and \yalmip.}

\section{Algorithmic Backbone: \sgsadmm}
\label{sec:strom:sgsadmm}

Solving the SDP~\eqref{eq:strom:popsdp:standardsdp} using interior point solvers does not scale to practical trajectory optimization problems. We introduce the scalable \sgsadmm algorithm.

Consider~\eqref{eq:strom:popsdp:standardsdp} as the primal SDP. Let $y \in \Real{m}$ and denote $\calA^*$ as the adjoint of $\calA$ defined as $\calA^* y := \sum_{l \in \seqordering{m}} y_l A_l$. The Lagrangian dual of~\eqref{eq:strom:popsdp:standardsdp} reads:
\begin{align}
    \label{eq:strom:sgsadmm:standardsdp-dual}
    \max_{y \in \Real{m}, S \in \Omega} \left\{ 
        \inprod{b}{y} \mymid \calA^* y + S = C, \ S \in \cartsymmp
     \right\}.
\end{align}
\sgsadmm can be seen as a semi-proximal ADMM method applied to the Augmented Lagrangian of \eqref{eq:strom:sgsadmm:standardsdp-dual}. Without going deep into the theory, and denoting $\Pi_{\cartsymmp}(\cdot)$ as the projection onto $\cartsymmp$, we present \sgsadmm in Algorithm~\ref{alg:strom:sgsadmm:sgsadmm}. Global convergence of \sgsadmm for the SDP pair~\eqref{eq:strom:popsdp:standardsdp}-\eqref{eq:strom:sgsadmm:standardsdp-dual} is established in~\cite{li2018mp-sgs-ccqp,chen2017mp-sgsadmm}. From Algorithm~\ref{alg:strom:sgsadmm:sgsadmm}, we see two main computational tasks are solving linear systems with a fixed coefficient matrix $\calA\calA^*$ and computing the projection of a sequence of symmetric matrices onto the PSD cone. As we shall see in \S\ref{sec:strom:gpu}, these two tasks can be implemented efficiently in GPUs, resulting in significant speedups.



\setlength{\textfloatsep}{0pt}
\begin{algorithm}[t]
    \label{alg:strom:sgsadmm:sgsadmm}
    \caption{\sgsadmm for solving~\eqref{eq:strom:sgsadmm:standardsdp-dual}.}
    \KwIn{Initial points $X^0 \in \cartsymm$ and $S^0 \in \cartsymm$, $\tau \in (0, 2)$, $\sigma > 0$.}
    
    \textbf{For} $k = 0, 1, 2, \dots$:
    \textbf{\quad Step 1.} Compute
    \begin{align}
        \label{eq:strom:sgsadmm:solve-y1}
        r_s^{k+\frac{1}{2}} := \frac{1}{\sigma} b - \calA \left( \frac{1}{\sigma} X^k + S^k - C \right), \ 
        y^{k + \frac{1}{2}} = \left( \calA \calA^* \right)^{-1} r_s^{k+\frac{1}{2}}
    \end{align}

    \textbf{\quad Step 2.} Compute 
    \begin{align}
        \label{eq:strom:sgsadmm:solve-S}
        X_b^{k+1} := X^k + \sigma (\calA^* y^{k+\frac{1}{2}} - C), \
        S^{k+1} = \frac{1}{\sigma}\left( 
            \Pi_{\cartsymmp}\left( 
                X_b^{k+1}
                \right) - X_b^{k+1}
            \right)
    \end{align}

    \textbf{\quad Step 3.} Compute 
    \begin{align}
        \label{eq:strom:sgsadmm:solve-y2}
        r_s^{k+1} := \frac{1}{\sigma} b - \calA \left( \frac{1}{\sigma} X^k + S^{k+1} - C \right), \
        y^{k + 1} = \left( \calA \calA^* \right)^{-1} r_s^{k+1}
    \end{align}

    \textbf{\quad Step 4.} Compute 
    \begin{align}
        \label{eq:strom:sgsadmm:solve-X}
        X^{k+1} = X^k + \tau \sigma \left( 
            S^{k+1} + \calA^* y^{k+1} - C
            \right)
    \end{align}
    
    \textbf{Until} terminal conditions hold.
    
    \KwOut{$\left( X^{k+1}, y^{k+1}, S^{k+1} \right)$.} 
\end{algorithm}


\textbf{Terminal Conditions}. We terminate Algorithm~\ref{alg:strom:sgsadmm:sgsadmm}  if either of the following two terminal conditions are met: (1) The maximum iteration number \texttt{maxiter} is reached. (2) The standard max KKT residual $\eta := \max\left\{ \eta_p, \eta_d, \eta_g \right\}$ is below a certain threshold \texttt{tol}, where $\eta_p, \eta_d, \eta_g$ are defined as:
\begin{align}
    \label{eq:strom:sgsadmm:kkt-residual}
    \eta_p = \frac{
        \norm{\calA(X) - b}_2
    }{1 + \norm{b}_2}, \ \eta_d = \frac{
        \norm{\calA^* y + S - C}_2
    }{1 + \norm{C}_2}, \ \eta_g = \frac{
        | \inprod{C}{X} - \inprod{b}{y} |
    }{
        1 + | \inprod{C}{X} | + | \inprod{b}{y} | 
    }.
\end{align}

\textbf{Refined Certificate of Suboptimality}. Unlike \mosek, which can solve the KKT residual $\eta$ to machine precision and compute $p^\star_\kappa$ exactly for estimating the certificate of suboptimality~\eqref{eq:suboptimality}, first-order methods can only achieve moderate accuracy ($\eta$ around $10^{-3}$ to $10^{-5}$) in our setting. Therefore, given an output triplet $(X, y, S)$ from Algorithm~\ref{alg:strom:sgsadmm:sgsadmm}, we need to generate a valid lower bound of $p^\star$. 
Given any $\dvar \in \Real{d}$, define the rank-1 lifting operator at relaxation order $\kappa$:
\begin{subequations}
    \label{eq:strom:sgsadmm:rank1-lifting}
    \begin{align}
        M_k(\dvar) & = \basis{\dvar(I_k)}{\kappa} \basis{\dvar(I_k)}{\kappa}\tran, \ \forall k \in \seqordering{N} \\
        L_{k,i}(\dvar) & = g_{k,i}(\dvar(I_k)) \cdot \basis{\dvar(I_k)}{\kappa - \degg{k,i}} \basis{\dvar(I_k)}{\kappa - \degg{k,i}}\tran, \ \forall k \in \seqordering{N}, i \in \calG_k
    \end{align}
\end{subequations}
Let $X(\dvar)$ be the aggregation of $M_k(\dvar)$ and $L_{k,i}(\dvar)$.
Denote $\hat{\dvar}$ as any feasible solution for the POP~\eqref{eq:strom:popsdp:chain-sparse-pop} such that $\calA(X(\hat{\dvar})) = b$ naturally holds. Let $X_\beta$ be the $\beta$-th symmetric matrix in $X$. The following inequality holds for any feasible $\hat{z}$
\begin{subequations}
    \begin{align}
        & \sum_{k=1}^N f_k(\hat{\dvar}(I_k)) = \inprod{C}{X(\hat{\dvar})} = \inprod{C - \calA^* y}{X(\hat{\dvar})} + \inprod{\calA^* y}{X(\hat{\dvar})} \\ 
        = & \inprod{C - \calA^* y}{X(\hat{\dvar})} + \inprod{y}{\calA(X(\hat{\dvar}))} = \inprod{C - \calA^* y}{X(\hat{\dvar})} + \inprod{y}{b} \\
        \ge & \inprod{b}{y} + \sum_{\beta} R_\beta \cdot \min \left\{ 
            0, \lambda_{\min}\left( (C - \calA^* y)_\beta \right)
         \right\} \label{eq:inexact-lower-bound}
    \end{align}
\end{subequations}
if $R_\beta \ge \trace{X(\hat{\dvar})_\beta}$. Since $\hat{\dvar}$ can be arbitrarily picked, we have
\begin{align}
    \label{eq:strom:sgsadmm:valid-lowerbound}
    \inprod{C}{X(\hat{\dvar})} \ge p^\star \ge \inprod{b}{y} + \sum_{\beta} R_\beta \cdot \min \left\{ 
        0, \lambda_{\min}\left( (C - \calA^* y)_\beta \right)
     \right\}.
\end{align}
The suboptimality gap $\subopt$ is refined as:
\begin{align}
    \label{eq:strom:sgsadmm:suboptimality-gap}
    \subopt := \frac{
        \inprod{C}{X(\hat{\dvar})} - \inprod{b}{y} - \sum_{\beta} R_\beta \cdot \min \left\{ 
            0, \lambda_{\min}\left( (C - \calA^* y)_\beta \right)
         \right\}
    }{
        1 + | \inprod{C}{X(\hat{\dvar})} | + | \inprod{b}{y} + \sum_{\beta} R_\beta \cdot \min \left\{ 
            0, \lambda_{\min}\left( (C - \calA^* y)_\beta \right)
         \right\} |
    },
\end{align}
where the only difference from~\eqref{eq:suboptimality} is the lower bound in~\eqref{eq:inexact-lower-bound} is used instead of $p^\star_\kappa$. The feasible $\hat{z}$ is obtained with the extraction method presented in \S\ref{sec:relax:hierarchy}.

The certificate~\eqref{eq:strom:sgsadmm:suboptimality-gap} holds if an upper bound $R_\beta$ exists for the trace of each $X(\hat{z})_\beta$. We prove the existence of $R_\beta$ in \S\ref{app:sec:existenceRbeta}.




\section{GPU Speedup: \cuadmm}
\label{sec:strom:gpu}

\shucheng{We now implement Algorithm~\ref{alg:strom:sgsadmm:sgsadmm} in C++ and CUDA}.
We will assume (\emph{i}) relaxation order $\kappa = 2$; (\emph{ii}) each clique has the same size, \ie $|I| = |I_1| = \cdots =|I_N|$; (\emph{iii}) each polynomial inequality constraint is of degree $1$ or $2$, \ie $\degg{k,i} = 1, \forall k \in \seqordering{N}, i \in \calG_k$. These assumptions hold for all applications considered in this paper.



\sgsadmm in Algorithm~\ref{alg:strom:sgsadmm:sgsadmm} involves three primary operations: (a) performing sparse matrix and dense vector multiplications (\cf~\eqref{eq:strom:sgsadmm:solve-X}), (b) solving sparse linear systems (\cf~\eqref{eq:strom:sgsadmm:solve-y1} and~\eqref{eq:strom:sgsadmm:solve-y2}), and (c) projecting onto the Cartesian product of multiple PSD cones (\cf~\eqref{eq:strom:sgsadmm:solve-S}). For (a), we sort matrix variables and linear constraints by clique indices. 
$X$ is stored by concatenating the symmetric vectorizations (\texttt{svec})~\cite{tutuncu2003mp-sdpt3-sdpsolver} of $M_k$ and $L_{k,i}$. These arrangements ensure $\calA$ and $\calA^*$ exhibit low bandwidth and good locality during sparse matrix-dense vector multiplications. On GPUs, we use \texttt{cuSPARSE}~\cite{naumov2010gputc-cusparse} for these operations.

\vspace{-6mm}

\begin{figure}[htbp]
    \begin{minipage}{\textwidth}
        \centering
        \includegraphics[width=\textwidth]{./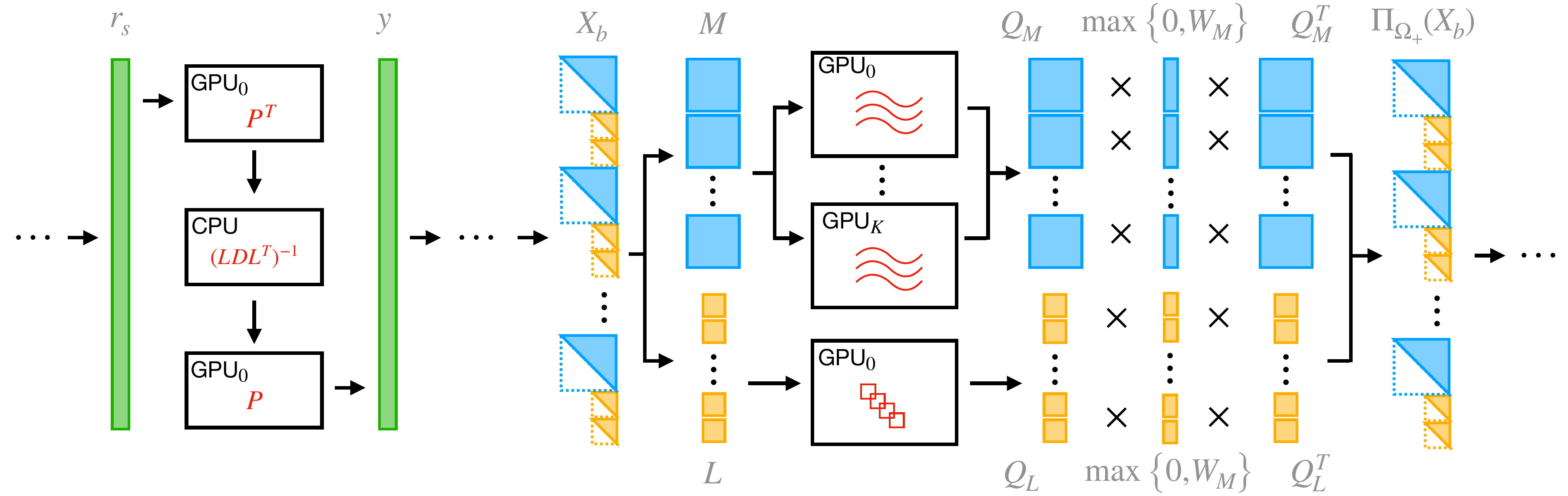}
    \end{minipage}
    \caption{Illustration of two operations in \cuadmm. Left: solving the sparse linear system. Right: parallel PSD cone projection.
    \label{fig:strom:sdp:solver-pipeline}}
    \vspace{2mm}
\end{figure}
\vspace{-8mm}

\textbf{Hybrid Sparse Linear System Solver}. 
For (b), we run sparse Cholesky decomposition with pre-permutation~\cite{chen2008toms-cholmod} on $\calA \calA^*$ at the beginning and only once. Since $\calA$ may be ill-conditioned, we add $\calA \calA^*$ by a scaled identity matrix $\epsilon I$ (with $\epsilon$ a small positive number) to enforce positive definiteness:
\begin{align}
    \label{eq:strom:gpu:cholesky}
    \epsilon I + \calA \calA^* = P LDL\tran P\tran.
\end{align}
Then, we can efficiently solve sparse triangular linear systems in~\eqref{eq:strom:sgsadmm:solve-y1} and~\eqref{eq:strom:sgsadmm:solve-y2}:
\begin{subequations}
    \begin{align}
        & y = \left( PLDL\tran P\tran \right)^{-1} r_s = P (LDL\tran)^{-1} P\tran r_s \\
        \Longrightarrow & w \leftarrow P\tran r_s, \ w \leftarrow (LDL\tran)^{-1} w, \ y \leftarrow P w .
    \end{align}
\end{subequations}
$P$ is a permutation matrix whose multiplication with dense vectors can be done in parallel. Thus, we implement $w \leftarrow P\tran r_s$ and $y \leftarrow P w$ on GPUs as 1-D reorderings. $w \leftarrow \left( LDL\tran \right)^{-1} w$ is not parallelizable, so we call \texttt{CHOLMOD}'s $LDL\tran$ solver~\cite{chen2008toms-cholmod} in CPU. This hybrid approach is approximately $4$ times faster than solving two sparse triangular systems on a GPU with \texttt{cuSPARSE}~\cite{naumov2010gputc-cusparse}, and $10\%$ faster than performing two permutations on a CPU, even with the GPU-CPU transfer overhead. The hybrid linear system solver is depicted in Fig.~\ref{fig:strom:sdp:solver-pipeline} left side.

\textbf{Parallel PSD Cone Projection.} 
For (c), we perform the projection onto multiple PSD cones in parallel. Let $X_{\beta}$ be the $\beta$-th matrix in $X$, projecting
$X_\beta$ to be PSD involves two steps~\cite{higham88-psdprojection}: (1) eigenvalue decomposition (\texttt{eig}): $X_\beta = Q_\beta W_\beta Q_\beta\tran$; (2) removal of negative eigenvalues: $\Pi_{\mathbb{S}_+}(X_\beta) := Q_\beta \max\{0,  W_\beta \} Q_\beta\tran$.
For the moment relaxation~\eqref{eq:strom:popsdp:sparse-momentrelax}, the number of moment matrices is $N$ and localizing matrices is $\sum_{k \in \seqordering{N}} |\calG_k|$. 
The size of the moment matrices is fixed at $s(|I|, 2)$, and the size of the localizing matrices is fixed at $s(|I|, 1)$. Based on two observations: (1) $N \ll \sum_{k \in \seqordering{N}} |\calG_k|$ and (2) $s(|I|, 2) \gg s(|I|, 1)$, we choose different eigenvalue decomposition routines for moment and localizing matrices. For localizing matrices, we use the batched-matrix \texttt{eig} interface with the Jacobi method~\cite{sleijpen2000siopt-eig-jacobi} in \texttt{cuSOLVER}, which is faster when the number of matrices is large and the matrix size is small. For moment matrices, we employ the single-matrix \texttt{eig} in \texttt{cuSOLVER} with the direct QR method~\cite{watkins1982siopt-eig-qr} and parallelize operations using multiple CUDA streams. Empirically in a single GPU, this approach is $10\%$ to $20\%$ faster than wrapping all moment matrices into the batched \texttt{eig}. 
If multiple GPUs are available, we distribute moment matrices for \texttt{eig} since this step dominates the runtime in Algorithm~\ref{alg:strom:sgsadmm:sgsadmm}.
After eigenvalue decomposition, we employ a mixture of custom kernel functions and batched matrix multiplication operators in \texttt{cuBLAS}~\cite{fatica2008hcs-cuda-toolkit} to perform batched projection. Moreover, 
we frequently split $X$ in \texttt{svec} form to two batched matrix sequences $\{ M_k \}_{k \in \seqordering{N}}$ and $\{ L_{k,i} \}_{k \in \seqordering{N}, i\in \calG_k}$, and vice versa.
Fast parallel mappings are implemented to minimize the cost. The parallel PSD cone projection is depicted in the right part of Fig.~\ref{fig:strom:sdp:solver-pipeline}.



\section{Experiments}
\label{sec:exp}


{\bf General Setup}. We consider five trajectory optimization problems: inverted pendulum, cart-pole, vehicle landing, flying robot, and car back-in, as illustrated in Fig.~\ref{fig:exp:gen:sys-illustration}. We rescale all polynomial variables and coefficients
to $[-1, 1]$ before moment relaxation for numerical stability. We unify the loss function design as the LQR-style loss, \ie denote the final state as $\xf$, the loss function~\eqref{eq:intro:trajopt-generalform} is
\begin{equation}
    \label{eq:exp:gen:lqr-loss}
    P_f \cdot (x_N - \xf)\tran Q_x (x_N - \xf) + \sum_{k=0}^{N-1} \left\{ 
        (x_k - \xf)\tran Q_x (x_k - \xf) + u_k\tran Q_u u_k 
     \right\},
\end{equation}
where $Q_x$ and $Q_u$ are designed to be identity matrices after rescaling. 


\begin{figure}[t]
    \begin{minipage}{\textwidth}
        \centering
        \begin{tabular}{ccc}
            \begin{minipage}{0.14\textwidth}
                \centering
                \includegraphics[width=\columnwidth]{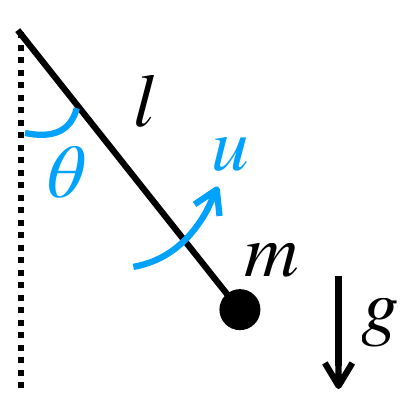}
                {\smaller (a) Pendulum}
            \end{minipage}

            \begin{minipage}{0.28\textwidth}
                \centering
                \includegraphics[width=\columnwidth]{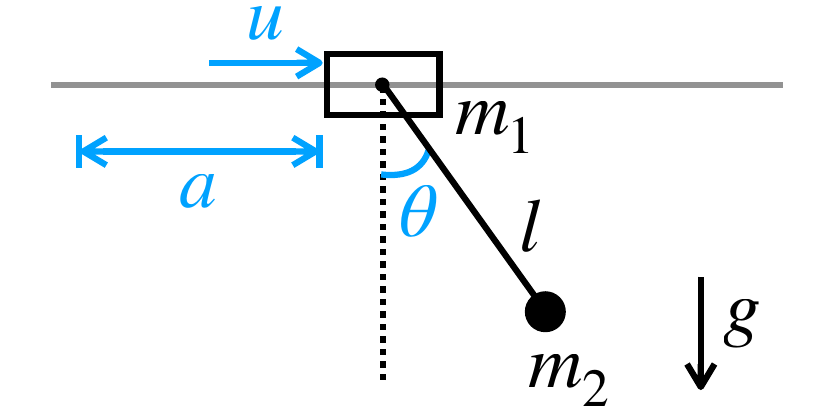}
                {\smaller (b) Cart-pole}
            \end{minipage}

            \begin{minipage}{0.14\textwidth}
                \centering
                \includegraphics[width=\columnwidth]{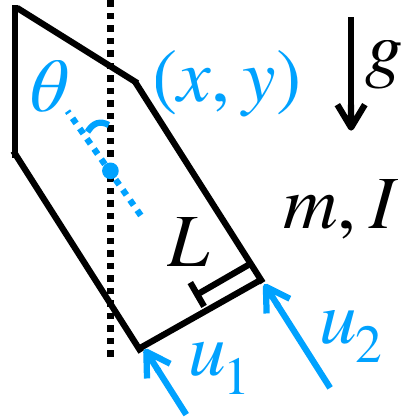}
                {\smaller (c) Landing}
            \end{minipage}

            \begin{minipage}{0.18\textwidth}
                \centering
                \includegraphics[width=\columnwidth]{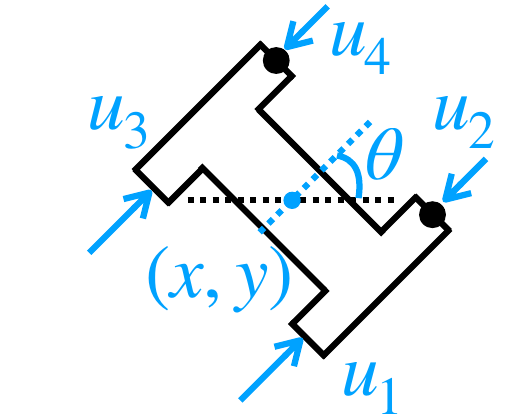}
                {\smaller (d) Flying robot}
            \end{minipage}

            \begin{minipage}{0.28\textwidth}
                \centering
                \includegraphics[width=\columnwidth]{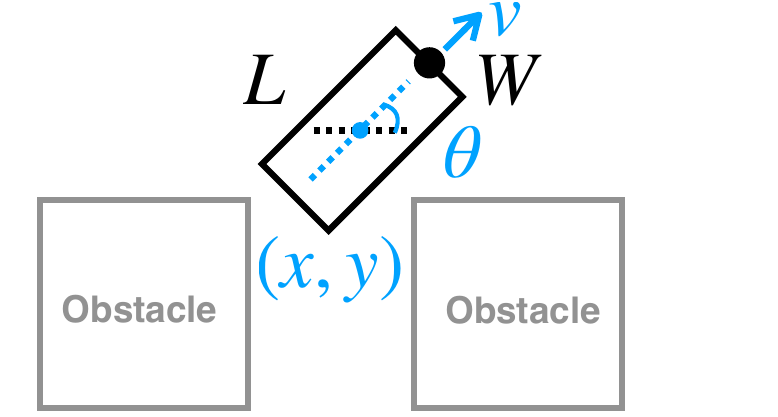}
                {\smaller (e) Car back-in}
            \end{minipage}
        \end{tabular}
    \end{minipage}

    \caption{Five trajectory optimization problems of dynamical systems. \label{fig:exp:gen:sys-illustration}
    }
    \vspace{2mm}
\end{figure}


\textbf{Baselines.} For interior point methods, we choose the commercial solver \mosek~\cite{aps2019ugrm-mosek-sdpsolver} as our baseline. For first-order methods, we choose the state-of-the-art general SDP solvers \cdcs~\cite{zheng2017ifac-cdcs-sdpsolver} and \sdpnal~\cite{yang2015mp-sdpnalplus-sdpsolver}.\footnote{We have tested \scs~\cite{odonoghue2023-scs-sdpsolver} but found its Matlab version to always generate wrong results for multiple-block SDP problems.} We only use the ADMM solver in \sdpnal because the semismooth Newton solver is not scalable. For first-order methods, \texttt{tol} is set as $10^{-4}$ in Algorithm~\ref{alg:strom:sgsadmm:sgsadmm}. 

Experiments were conducted on a high-performance workstation equipped with a 2.7 GHz AMD 64-Core sWRX8 Processor, two 2.5 GHz NVIDIA RTX 6000 Ada Generation GPUs, and 1 TB of RAM, to enable \mosek~to solve large-scale problems (with 64 CPU threads). 
In \cuadmm, we set up 15 CUDA streams per GPU for eigenvalue decomposition of moment matrices. The suboptimality gap $\subopt$ is defined in~\eqref{eq:strom:sgsadmm:suboptimality-gap}, where $\hat{\dvar}$ is obtained from the extraction method presented in \S\ref{sec:relax:hierarchy} with Matlab \fmincon~as the local solver. We only report runtime of SDP solvers because (1) the runtime of local solvers is negligible compared to the convex SDP solver, and (2) better CPU~\cite{howell2019iros-altro} and GPU~\cite{malyuta2022ieee-convexopt-trajectorygeneration} local solvers are available, whose design is beyond the scope of this paper.  

\textbf{Summary Results}. 
For the pendulum problem, we tested the performance of four SDP solvers on 100 initial states generated by a dense grid.
For the other four experiments, we assessed \cuadmm's performance on 10 randomly generated initial states. Due to the significantly longer solving times of the other three solvers, we evaluated their performance on only one random initial state that \cuadmm could solve, with a maximum running time set at 5 hours. Table~\ref{tab:exp:gen:onlyone} presents the problem scale, suboptimality gaps, and total running times. It shows that (1) the mean and median suboptimality gaps from \cuadmm are either well below $10^{-2}$ (in all cases except for the car back-in) or near $10^{-2}$ (for the car back-in); (2) \mosek can solve the pendulum and cart-pole problems very well, but cannot scale to other problems; (3) Compared with other ADMM-based solvers, \cuadmm achieves up to $10\times$ speedup in large-scale SDP problems (with $m > 5 \times 10^5$); In fact, the other ADMM solvers cannot produce $<1\%$ suboptimality certificates with the time presented in Table~\ref{tab:exp:gen:onlyone}; (4) With warmstart techniques, \cuadmm can solve the pendulum problem to global optimality with an average cost of 0.66 seconds. We only list \cuadmm and \sdpnal's warm start results since \cdcs does not support explicit initial guesses.

\begin{table}[t]
    \centering
    \resizebox{\columnwidth}{!} {
    \begin{tabular}{|c|c|c|c|c|c|c|c|c|c|c|c|}
        \hline
        \multirow{2}{*}{} & \multirow{2}{*}{$d_x$} &\multirow{2}{*}{$d_u$} & \multirow{2}{*}{$\text{size}(M)$} & \multirow{2}{*}{$\#M$} & \multirow{2}{*}{$\text{size}(L)$} & \multirow{2}{*}{$m$} & \multirow{2}{*}{$\log_{10}\xi$} & \multicolumn{4}{c|}{Total Time} \\
        \cline{9-12}
        & & & & & & & & \mosek & \cdcs & \sdpnal & \cuadmm \\
        \hline 
        \multirow{2}{*}{Pendulum} & \multirow{2}{*}{$4$} & \multirow{2}{*}{$1$} & \multirow{2}{*}{$55$} & \multirow{2}{*}{$30$} & \multirow{2}{*}{$10$} & \multirow{2}{*}{$47351$} & $\bm{-2.58}$ ($\bm{-2.93}$) &  \multirow{2}{*}{$9.5$s} & \multirow{2}{*}{$110.3$s} & cold: $264.4$s & cold: $31.2$s ($14.6$s) \\
        \cline{8-8} \cline{11-12}
        & & & & & & & $\bm{-2.71}$ ($\bm{-2.97}$) & & & warm: $2.95$s & warm: \textbf{$\bm{0.66}$s} (\textbf{$\bm{0.62}$s}) \\
        \hline 
        Cart-Pole & $5$ & $1$ & $105$ & $30$ & $14$ & $168961$ & $\bm{-2.89}$ ($\bm{-2.98}$) & \textbf{$\bm{112.0}$s} & $1471$s & $3671$s & $183.0$s ($173.1$s) \\
        \hline 
        Car Back-in & $7$ & $4$ & $190$ & $30$ & $19$ & $509141$ & $\bm{-1.87}$ ($\bm{-1.99}$) & $>5$h & $6123$s & $>5$h & \textbf{$\bm{431.5}$s} (\textbf{$\bm{536.8}$s})  \\
        \hline 
        Vehicle Landing & $8$ & $2$ & $190$ & $50$ & $19$ & $946326$ & $\bm{-2.47}$ ($\bm{-2.42}$)  & $>5$h & $2.9$h & $>5$h &  \textbf{$\bm{919.2}$s} (\textbf{$\bm{1165}$s}) \\
        \hline 
        Flying Robot & $8$ & $4$ & $231$ & $60$ & $21$ & $1595001$ & $\bm{-2.57}$ ($\bm{-2.49}$) & $**$ & $>5$h & $>5$h & \textbf{$\bm{1648}$s} (\textbf{$\bm{2027}$s}) \\
        \hline 
    \end{tabular}
    }
    \vspace{0.5mm}
    \caption{Summary of numerical experiments. $(\text{size}(M), \#M)$: (dimension, number) of the moment matrices; $\text{size}(L)$: dimension of the localizing matrix. $m$: number of equality constraints in SDP.  Means (medians) are reported for $\log_{10}$ of suboptimality gaps $\xi$, and total running time of \cuadmm. 
    ``$**$'' indicates \mosek runs out of 1TB memory.
    \label{tab:exp:gen:onlyone}}
    \vspace{-2mm}
\end{table}

Detailed experimental results and analyses are presented in \S\ref{app:sec:exp}. \emph{We encourage the reader to check out videos of the certifiably optimal trajectories computed by \cuadmm on our project website}. 
Due to limited space, in the next we briefly analyze three examples out of five: pendulum, car back-in, and flying robot. 



{\bf Inverted Pendulum}. Denote pendulum's state as $(\theta, \dot{\theta})$, we first use the variational integrator to discretize the continuous-time dynamics on $\SOtwo$. See \S\ref{app:subsec:exp:p} for a detailed derivation of discretized dynamics and constraints. 

\emph{Warmstarts with kNN}. To enable data-driven methods for warmstarts, we collect $60 \times 120$ \mosek solutions in the state space $(\theta, \dot{\theta}) \in [0, \pi] \times [-5, 5]$ off-line. Given a new initial state $(\theta_0, \dot{\theta}_0)$, we use Delaunay Triangulation~\cite{lee1980springer-delaunay} to search for three nearest neighbors among the \mosek's solutions. A convex combination of these three solutions is treated as the initial solution for \cuadmm and \sdpnal. 

\emph{Comparison with baselines}. 
In the state space $(\theta, \dot{\theta}) \in [0, \pi] \times [-5, 5]$, we sample $10 \times 10$ initial states with a dense grid. Fig.~\ref{fig:exp:p} bottom shows the performance of different solvers. In almost all experiments, \mosek achieved suboptimality gaps as low as $10^{-7}$. Among the first-order solvers, \cdcs struggled to get low dual infeasibility $\eta_d$, resulting in significantly higher suboptimality gaps. While \sdpnal shows reliable performance, its per-iteration cost is $7$ times higher than \cuadmm. With warmstarts, \cuadmm solves the pendulum problem in less than $0.7$s on average, achieving $10\times$ speedup compared to \mosek. 
$10\%$ initial states are hard to solve for all solvers, which form a mysterious spiral line as discussed in~\cite{han2023arxiv-nonsmooth}. See \S\ref{app:subsec:exp:p} for a detailed discussion.

\emph{Model predictive control}. 
Fig.~\ref{fig:exp:p} top shows a simulated MPC case starting from $\theta_0 = 0.1, \dot{\theta} = 0.0$. We set the control frequency as $10$Hz. In almost all time steps, the suboptimality gap is below $10^{-2}$, and the average solving time is $0.72$s. 
\vspace{-10mm}

\begin{figure}[htbp]
    \centering
    \begin{minipage}{\textwidth}
        \centering
        \begin{tabular}{ccc}
            \begin{minipage}{0.33\textwidth}
                \centering
                \includegraphics[width=\columnwidth]{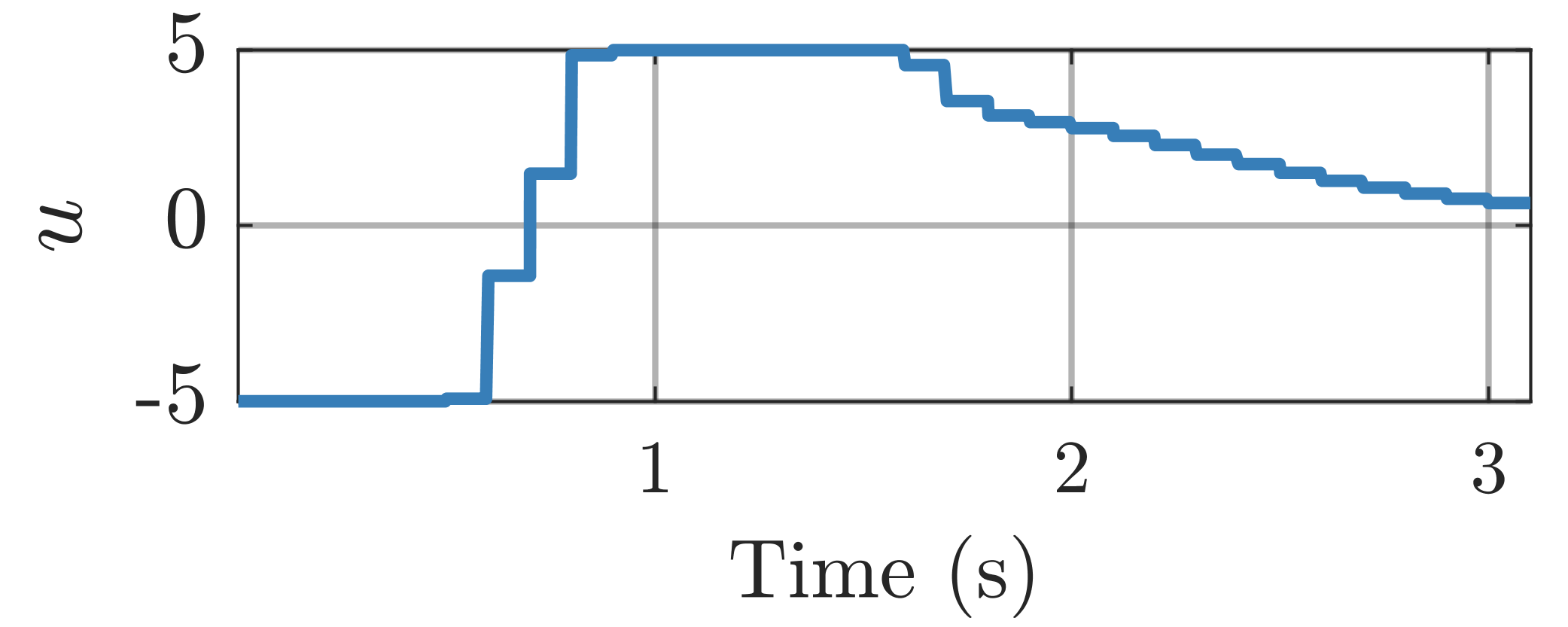}
            \end{minipage}

            \begin{minipage}{0.33\textwidth}
                \centering
                \includegraphics[width=\columnwidth]{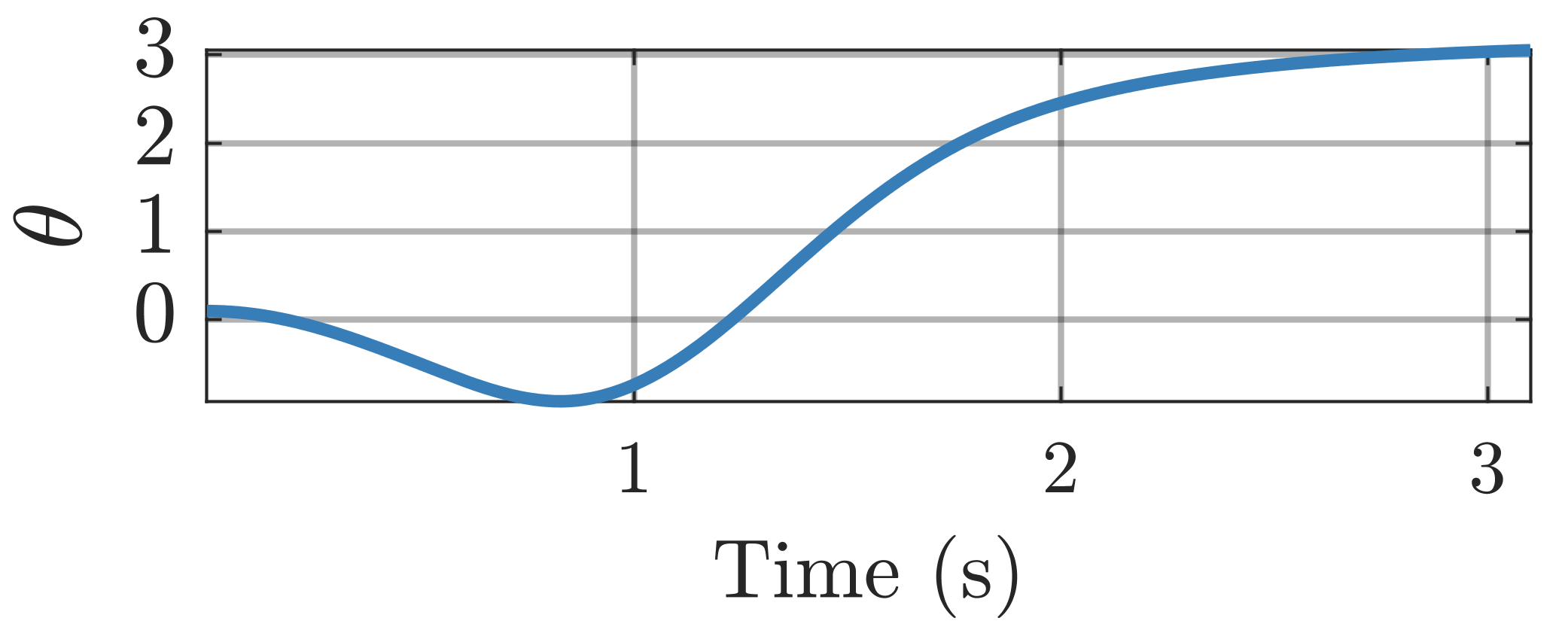}
            \end{minipage}

            \begin{minipage}{0.33\textwidth}
                \centering
                \includegraphics[width=\columnwidth]{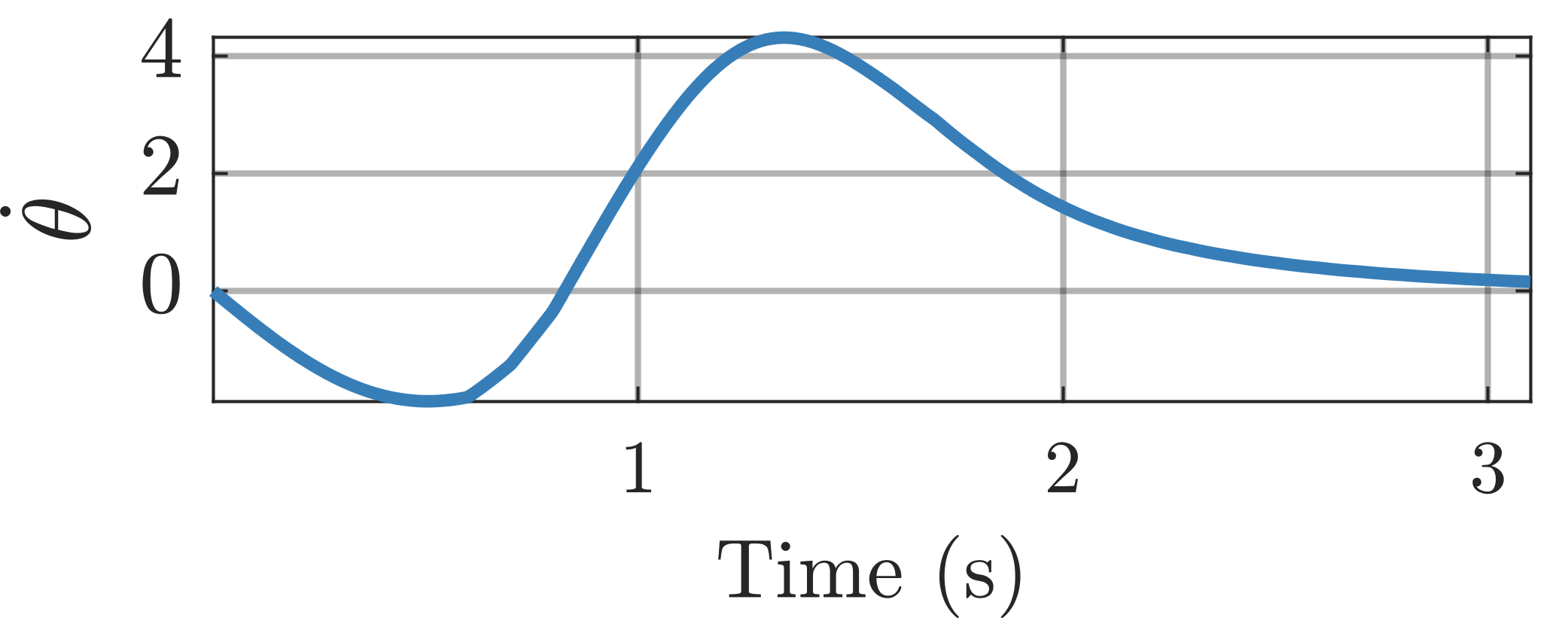}
            \end{minipage}
        \end{tabular}
    \end{minipage}

    \begin{minipage}{\textwidth}
        \centering
        \begin{tabular}{ccc}
            \begin{minipage}{0.35\textwidth}
                \centering
                \includegraphics[width=\columnwidth]{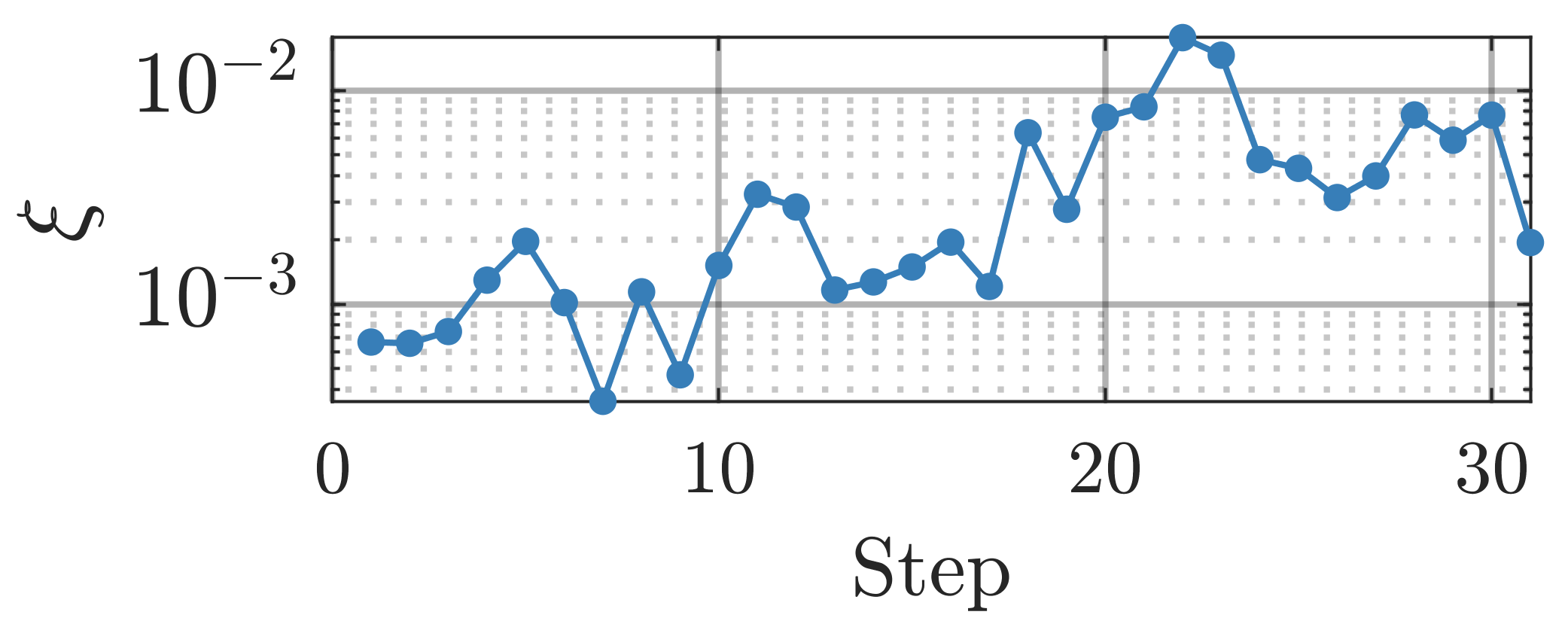}
            \end{minipage}

            \begin{minipage}{0.32\textwidth}
                \centering
                \includegraphics[width=\columnwidth]{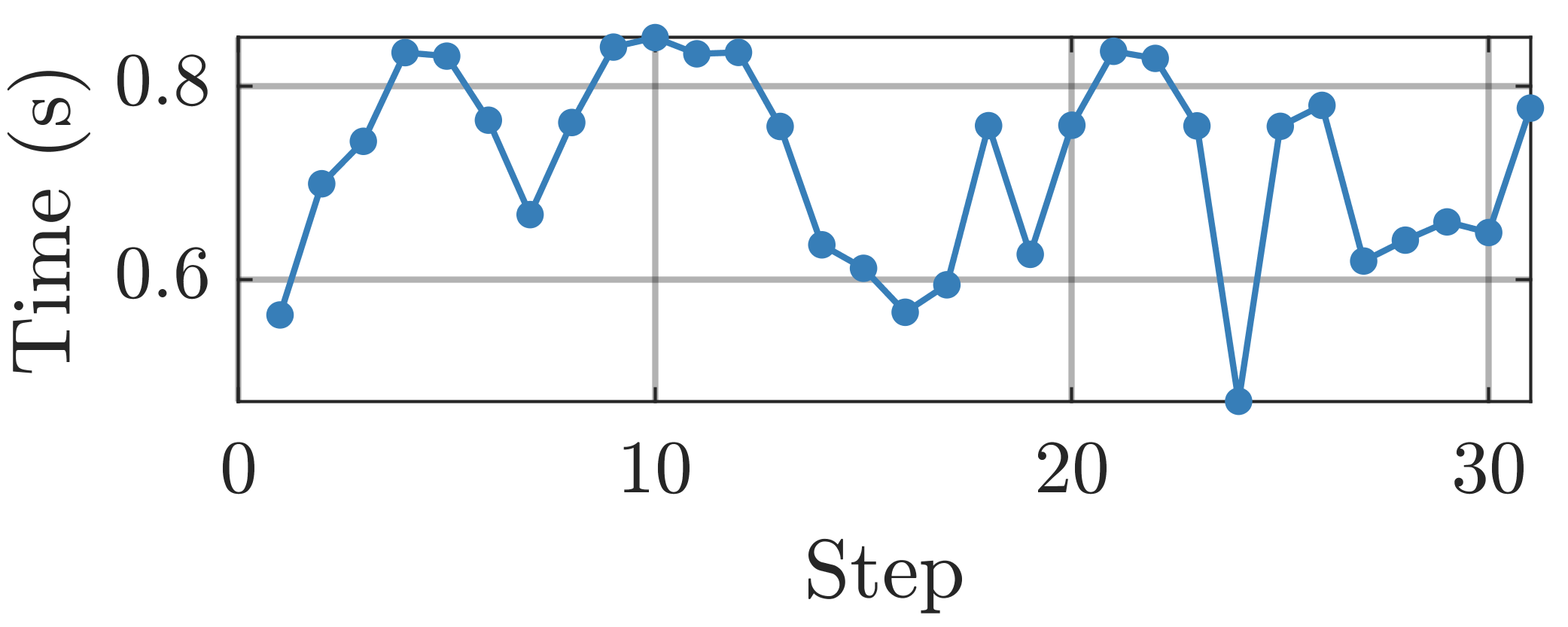}
            \end{minipage}

            \begin{minipage}{0.32\textwidth}
                \centering
                \includegraphics[width=\columnwidth]{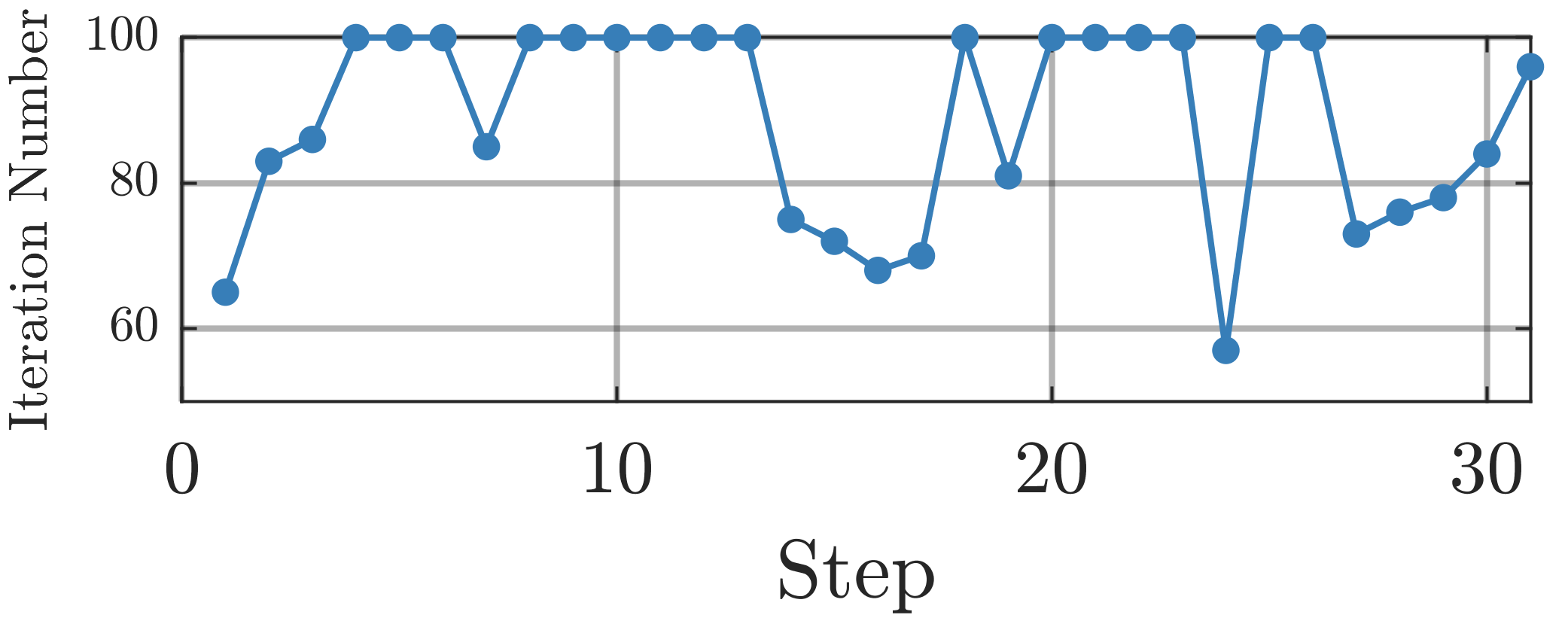}
            \end{minipage}
        \end{tabular}
    \end{minipage}

    \begin{minipage}{\textwidth}
        \centering
        \includegraphics[width=0.9\columnwidth]{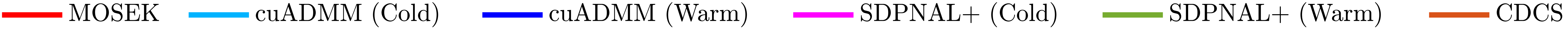}
    \end{minipage}

    \begin{minipage}{\textwidth}
        \centering
        \begin{tabular}{cccc}
            \begin{minipage}{0.25\textwidth}
                \centering
                \includegraphics[width=\columnwidth]{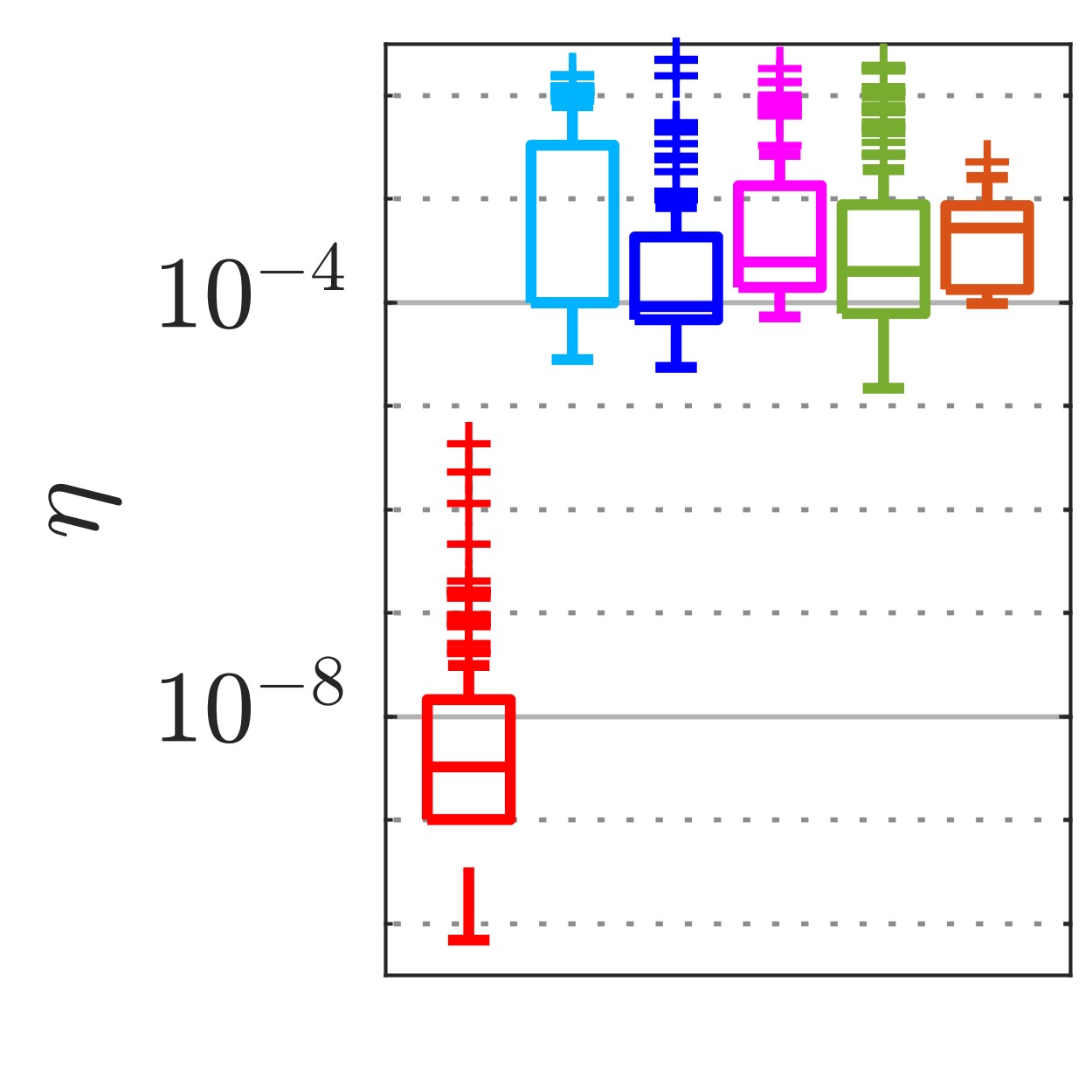}
            \end{minipage}

            \begin{minipage}{0.25\textwidth}
                \centering
                \includegraphics[width=\columnwidth]{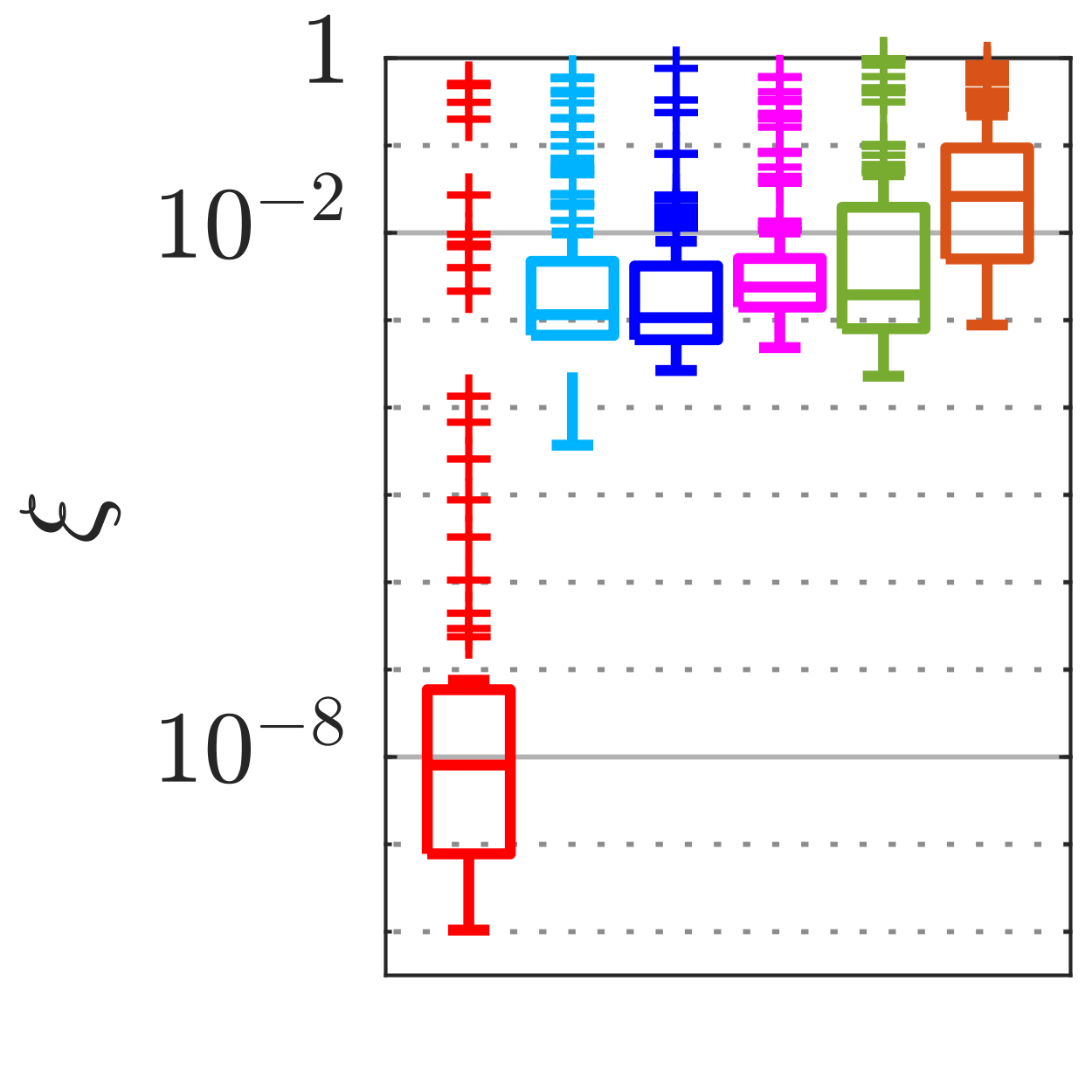}
            \end{minipage}

            \begin{minipage}{0.25\textwidth}
                \centering
                \includegraphics[width=\columnwidth]{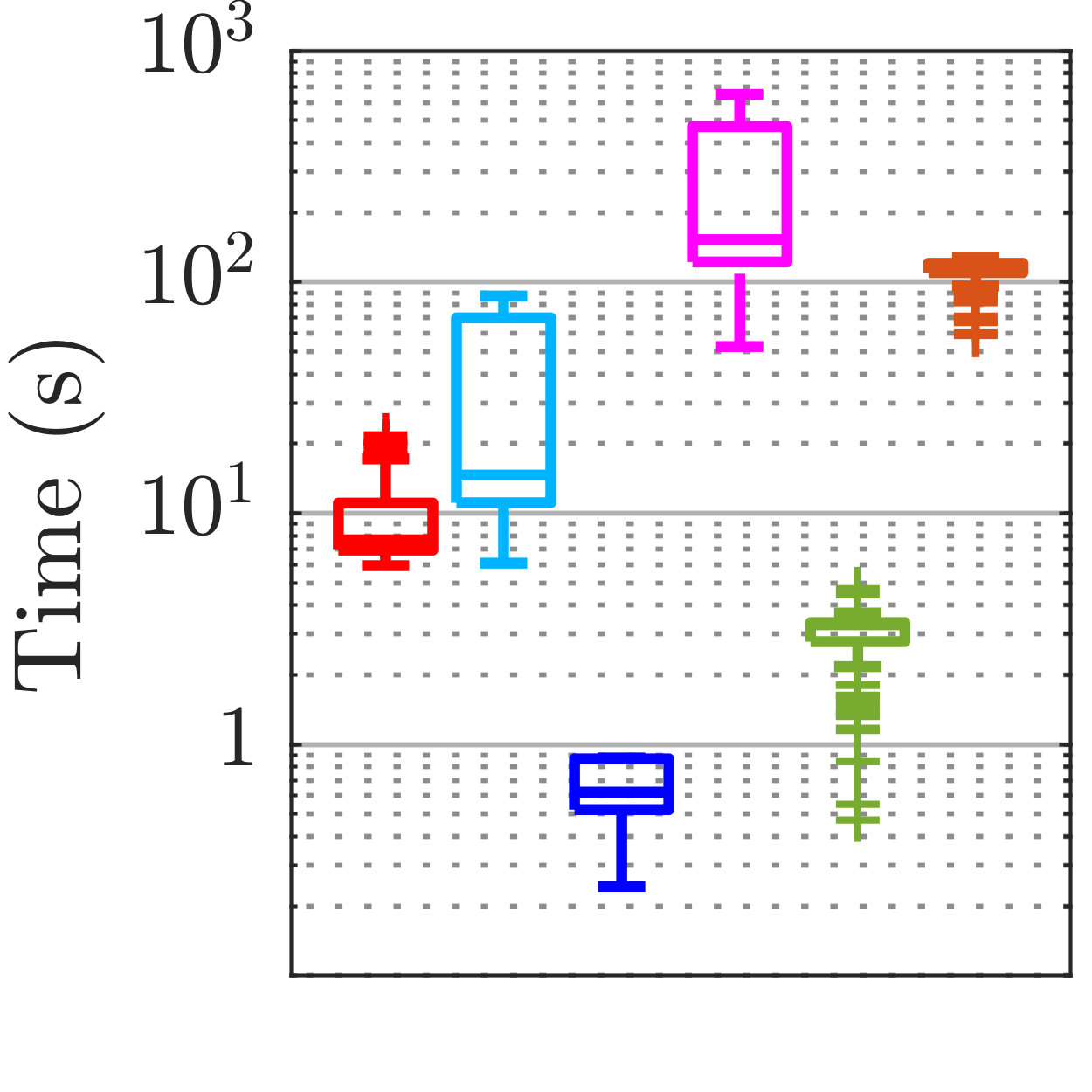}
            \end{minipage}

            \begin{minipage}{0.22\textwidth}
                \centering
                \includegraphics[width=\columnwidth]{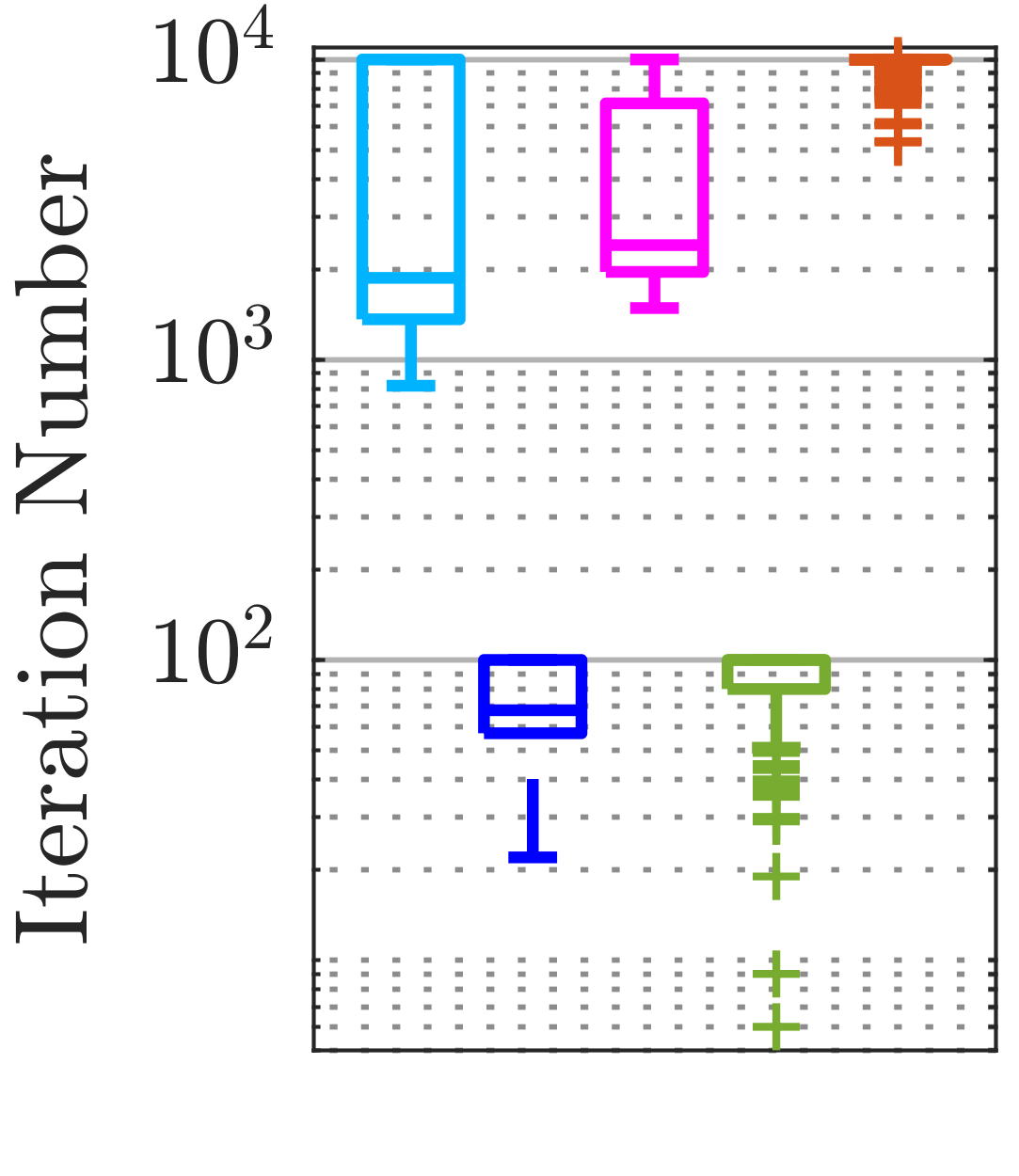}
            \end{minipage}
        \end{tabular}
    \end{minipage}
    \vspace{-6mm}
    \caption{Top: MPC of inverted pendulum with kNN warmstarts. Bottom: performance of six SDP solvers in inverted pendulum.
    \label{fig:exp:p}}
\end{figure}
\vspace{-8mm}


{\bf Car Back-in}. In car back-in, we try to back-up a car (represented as a rectangle) between two square obstacles. To enforce obstacle avoidance using polynomial constraints, we adopt the separating hyperplane method in~\cite{amice2022wafr-collisionfree-robot-manipulation}.
Since the separating hyperplane is in general not unique, the moment matrices will not be rank-one. However, heuristic methods still exist to extract globally optimal trajectories, see all the details in \S\ref{app:subsec:exp:cr}. We randomly select $10$ initial states for four solvers to test their performance. We set the maximal running time to $1$ hour. Results are recorded in the top panel of Fig.~\ref{fig:exp:cr}. \cuadmm is the only solver to finish all experiments in one hour and generate certifiable results ($\xi$ near $10^{-2}$). The bottom panel of Fig.~\ref{fig:exp:cr} shows three globally optimal trajectories. 


\begin{figure}[t]
    \vspace{-3mm}
    \begin{minipage}{\textwidth}
        \centering
        \includegraphics[width=0.7\columnwidth]{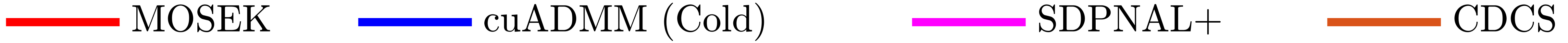}
    \end{minipage}

    \begin{minipage}{\textwidth}
        \centering
        \begin{tabular}{cccc}
            \begin{minipage}{0.25\textwidth}
                \centering
                \includegraphics[width=\columnwidth]{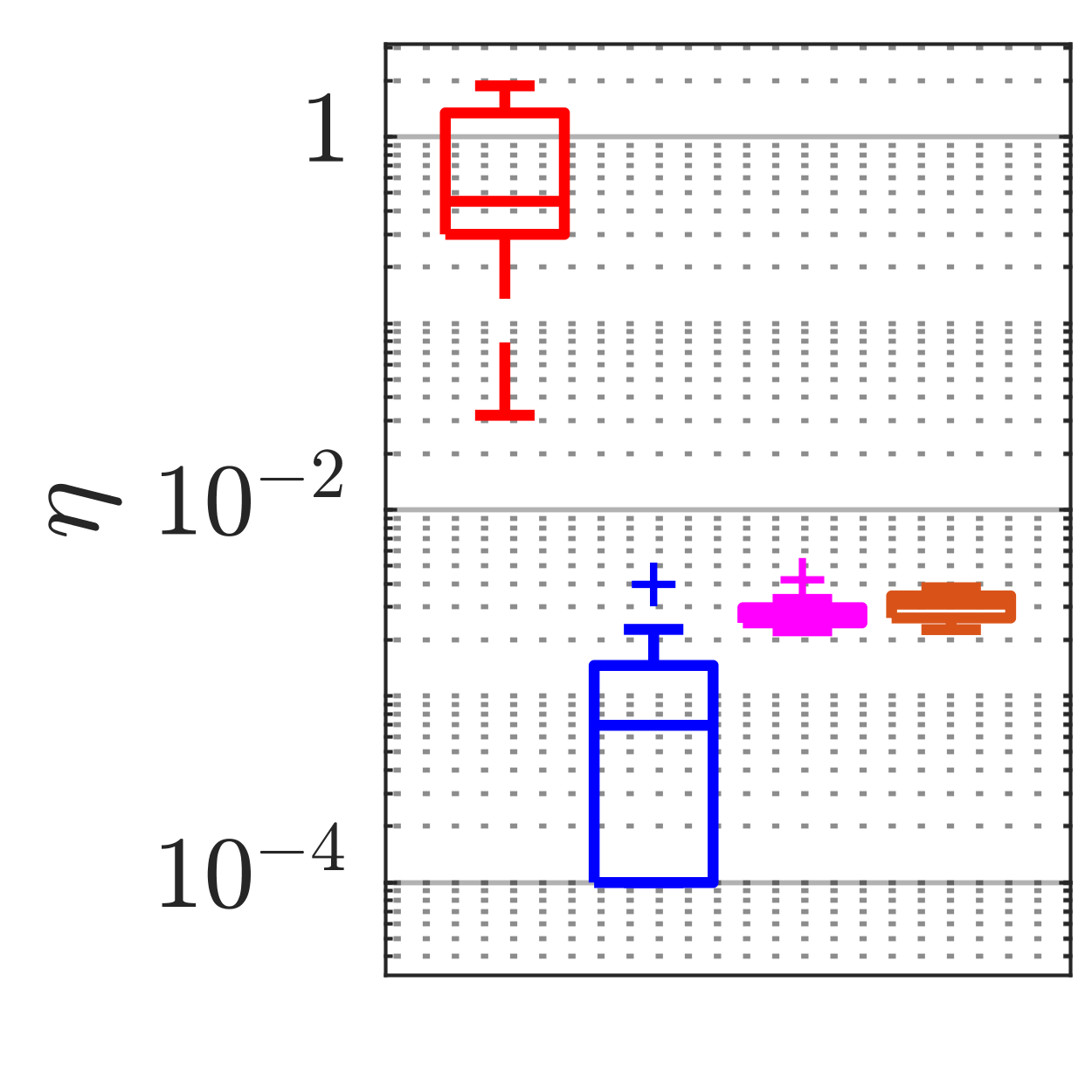}
            \end{minipage}

            \begin{minipage}{0.25\textwidth}
                \centering
                \includegraphics[width=\columnwidth]{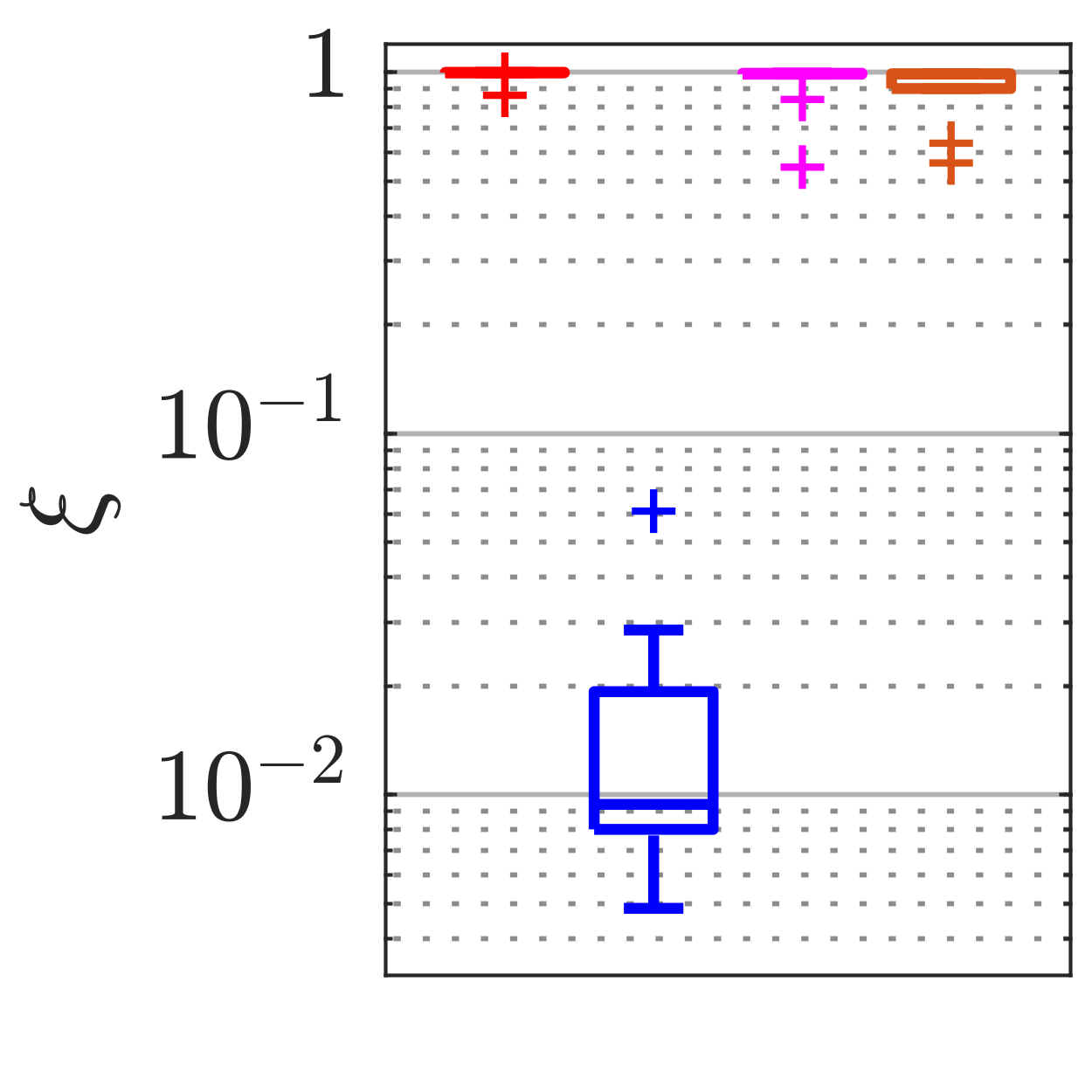}
            \end{minipage}

            \begin{minipage}{0.25\textwidth}
                \centering
                \includegraphics[width=\columnwidth]{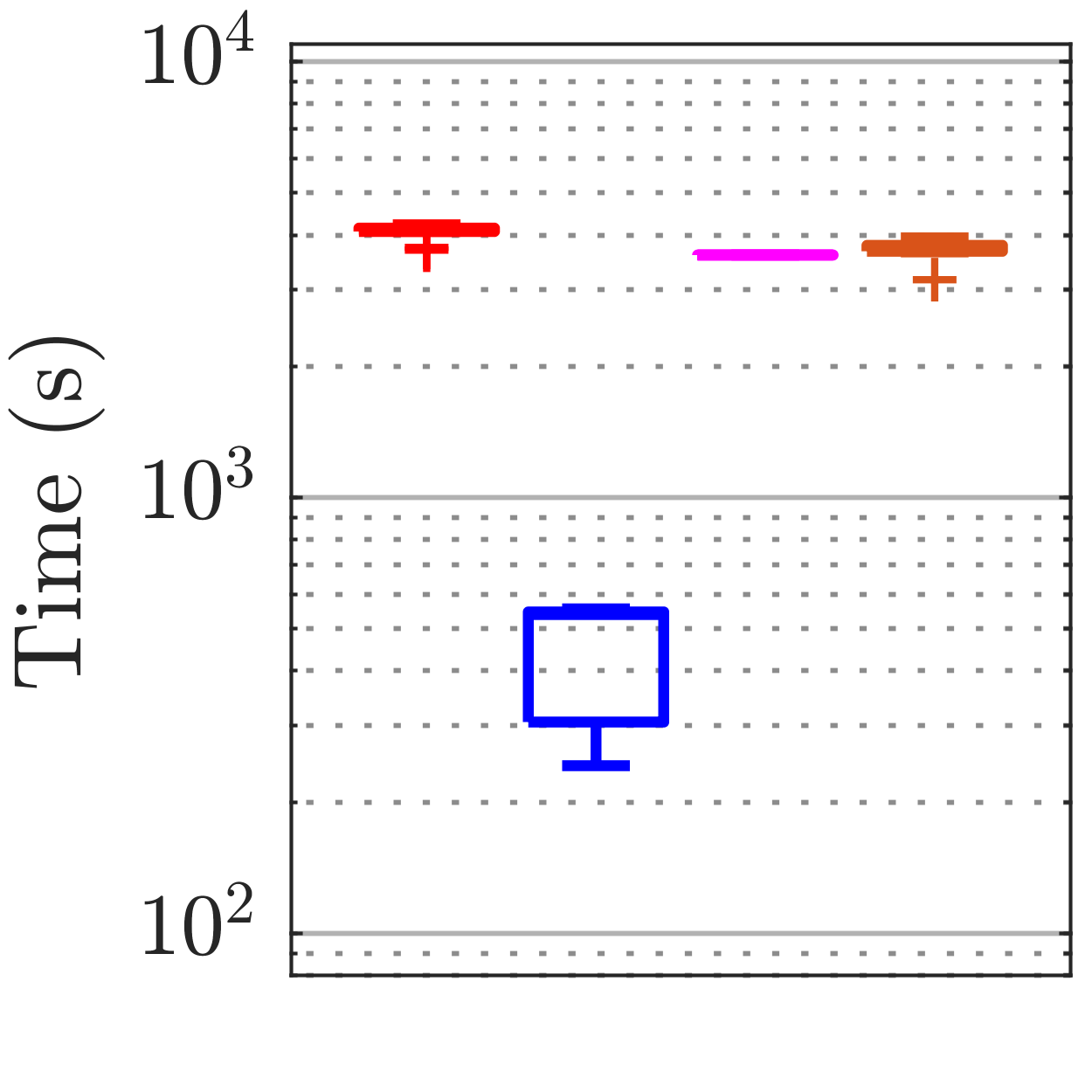}
            \end{minipage}

            \begin{minipage}{0.22\textwidth}
                \centering
                \includegraphics[width=\columnwidth]{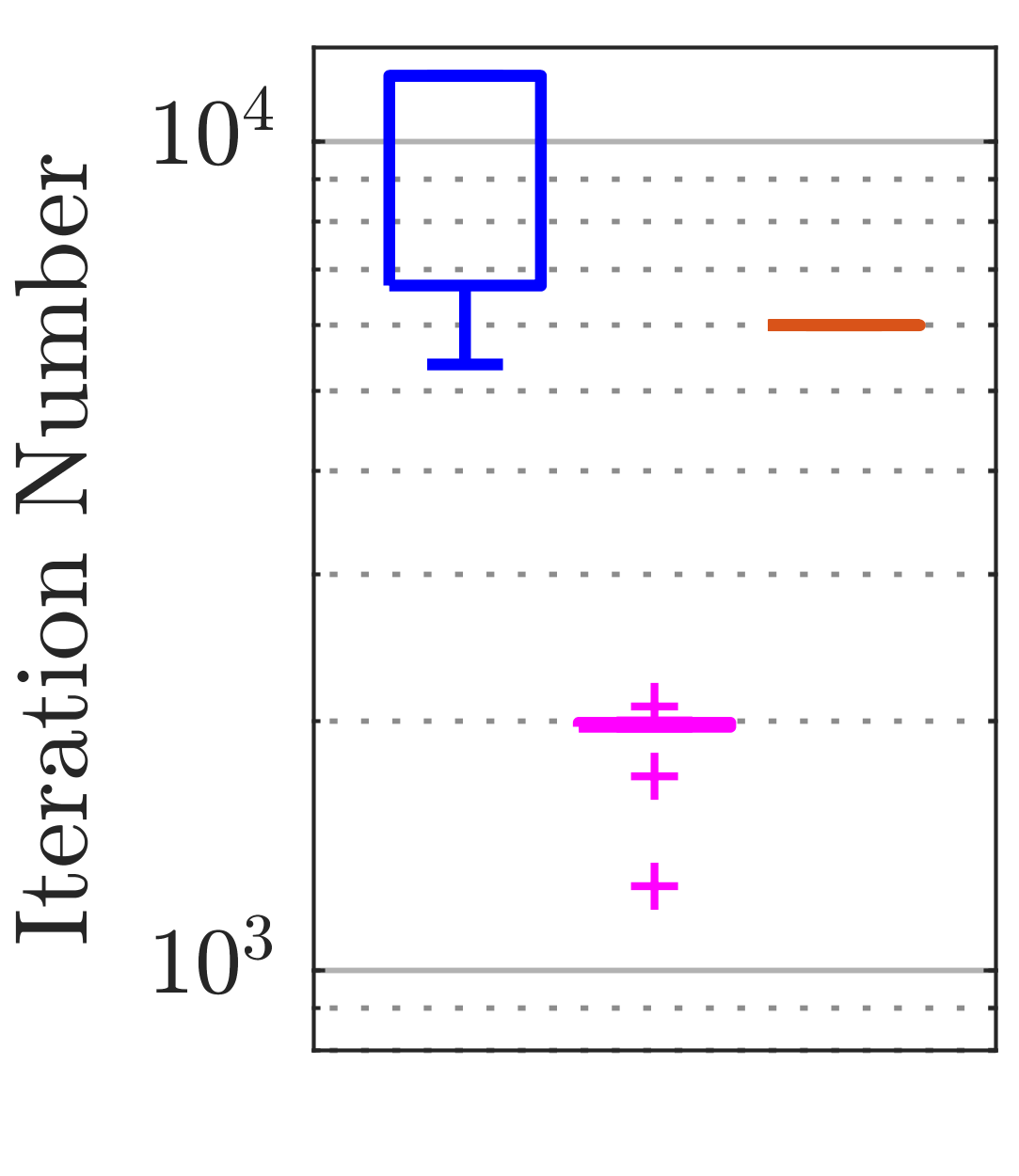}
            \end{minipage}
        \end{tabular}
        \vspace{-3mm}
    \end{minipage}
    \begin{minipage}{\textwidth}
        \centering
        \begin{tabular}{ccc}
            \begin{minipage}{0.33\textwidth}
                \centering
                \vspace{0.5mm}
                \includegraphics[width=\columnwidth]{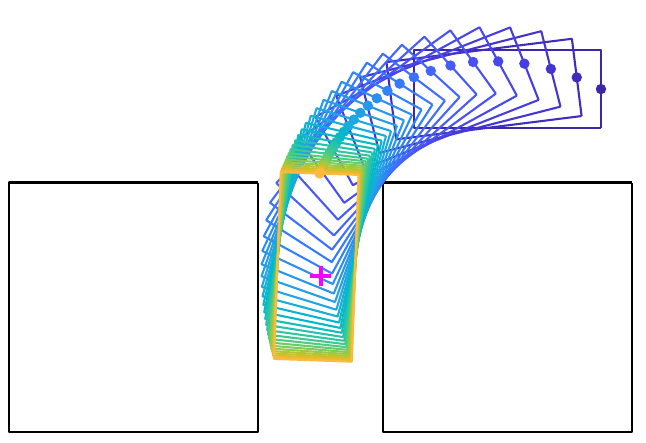}
            \end{minipage}

            \begin{minipage}{0.33\textwidth}
                \centering
                \includegraphics[width=\columnwidth]{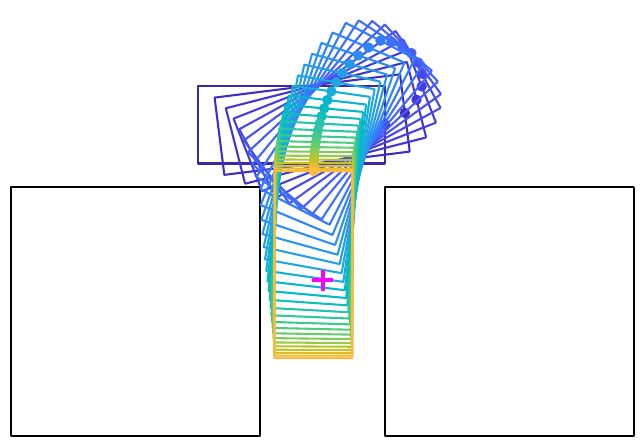}
            \end{minipage}

            \begin{minipage}{0.33\textwidth}
                \centering
                \vspace{0.5mm}
                \includegraphics[width=\columnwidth]{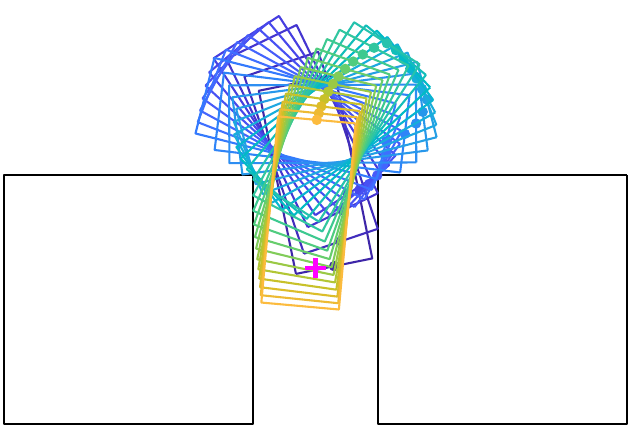}
            \end{minipage}
        \end{tabular}
    \end{minipage}
    \vspace{-5mm}
    \caption{Top: Performance of four SDP solvers in the car back-in problem. Bottom: Three globally optimal trajectories, where dot indicates front of vehicle, and from blue to green indicates from start to end of trajectory. 
    \label{fig:exp:cr}}
    \vspace{1mm}
\end{figure}


{\bf Flying Robot}.
In this experiment, we want to send a flying robot with four boosters to the origin. The vehicle has six continuous-time states: 2-D position $(x, y)$, 2-D velocity $(\dot{x}, \dot{y})$, angular position $\theta$ and velocity $\dot{\theta}$. The discretization procedure is similar to the pendulum case. Fig.~\ref{fig:exp:fr:demos-plots} shows two globally optimal trajectories. Note that \cuadmm is the only solver capable of computing optimal trajectories in this problem (\cf Table~\ref{tab:exp:gen:onlyone}).

\vspace{-6mm}
\begin{figure}[h]
    \label{fig:exp:fr:demos-plots}
    \hspace{-10mm}
    \begin{minipage}{\textwidth}
        \centering
        \begin{tabular}{cccc}
            \begin{minipage}{0.25\textwidth}
                \centering
                \includegraphics[width=\columnwidth]{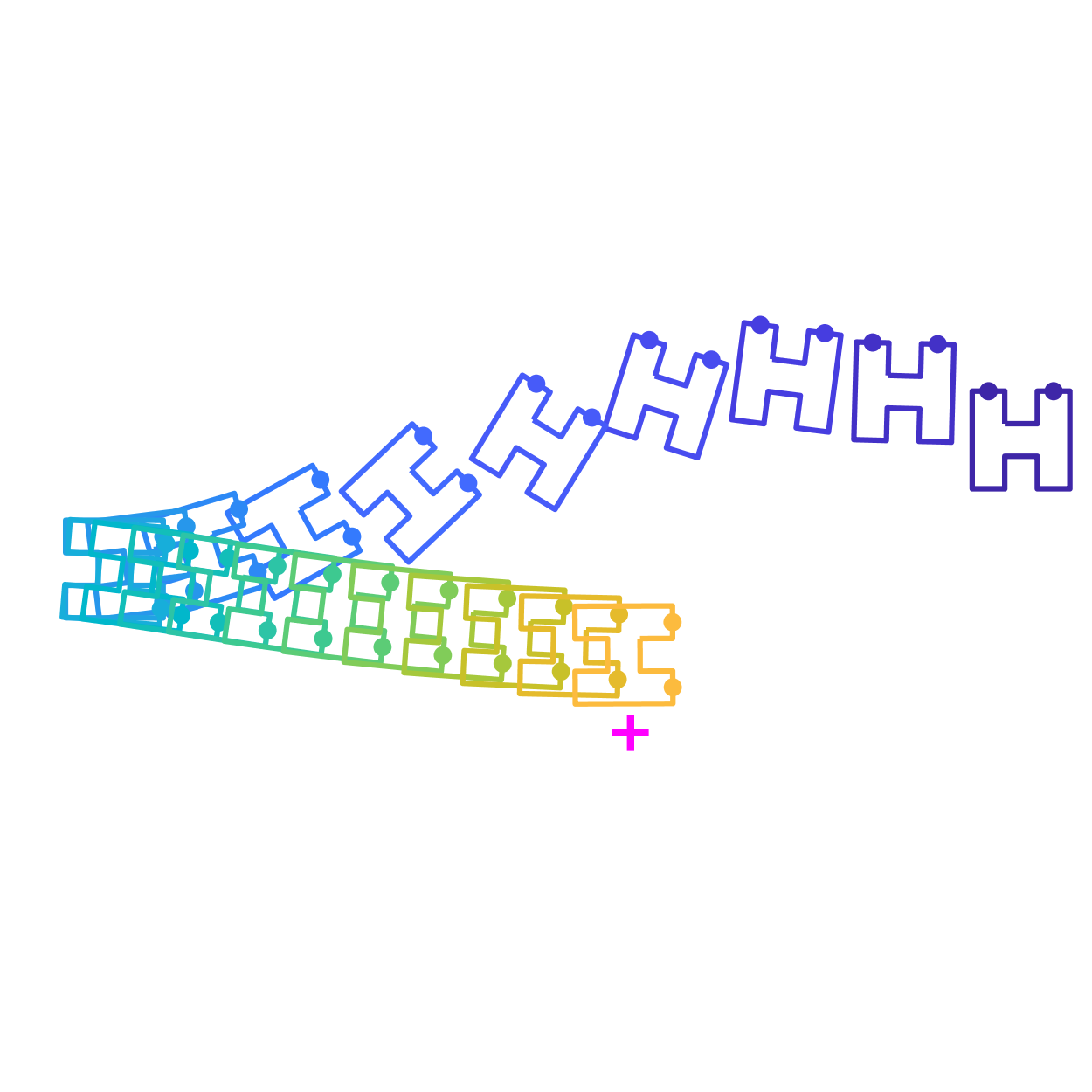}
            \end{minipage}

            \begin{minipage}{0.3\textwidth}
                \includegraphics[width=\columnwidth]{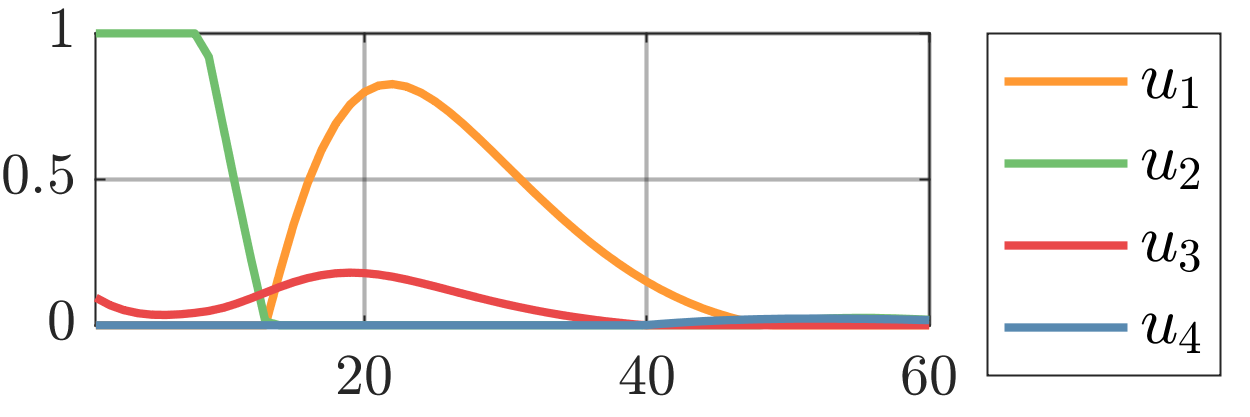}
                \includegraphics[width=\columnwidth]{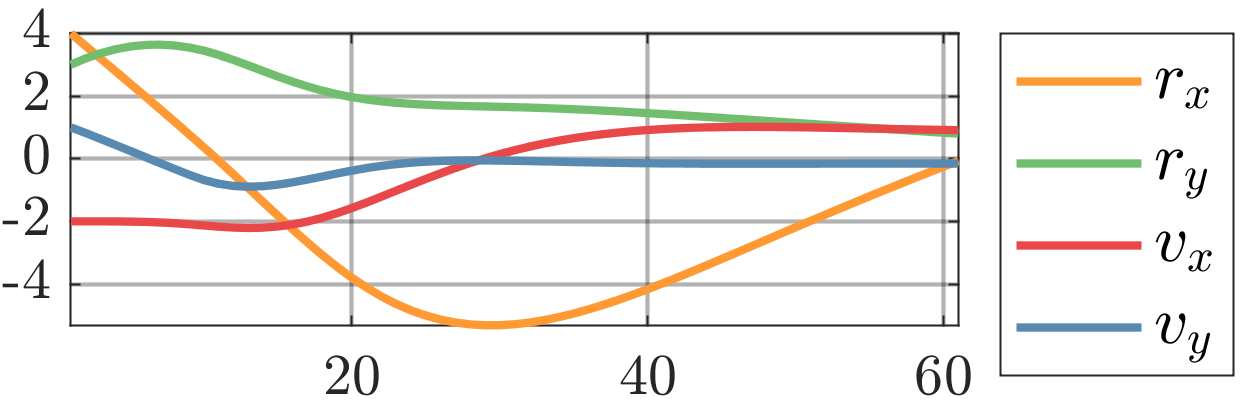}
                \includegraphics[width=\columnwidth]{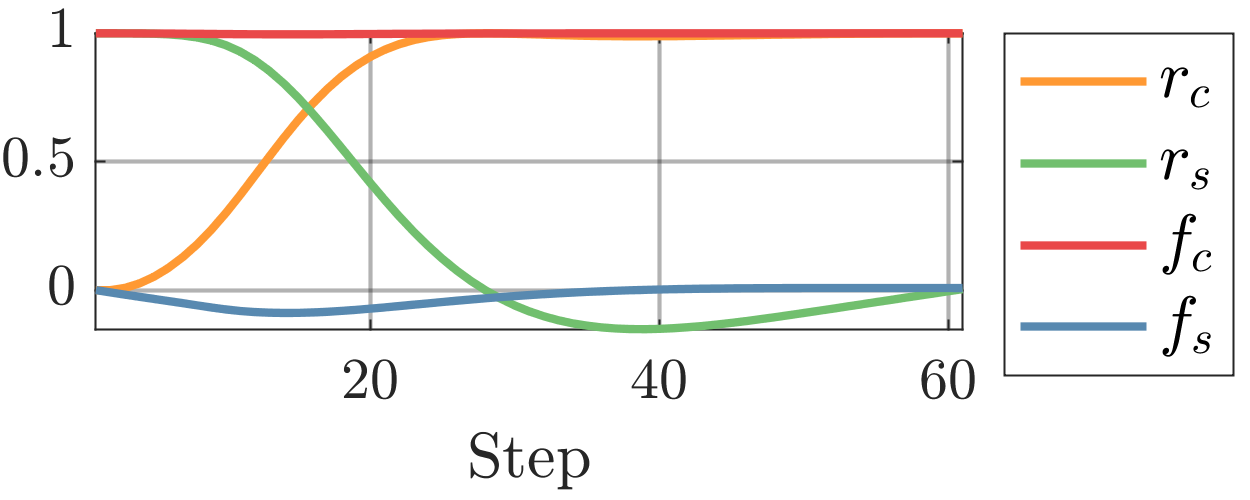}
            \end{minipage}

            \begin{minipage}{0.25\textwidth}
                \centering
                \includegraphics[width=\columnwidth]{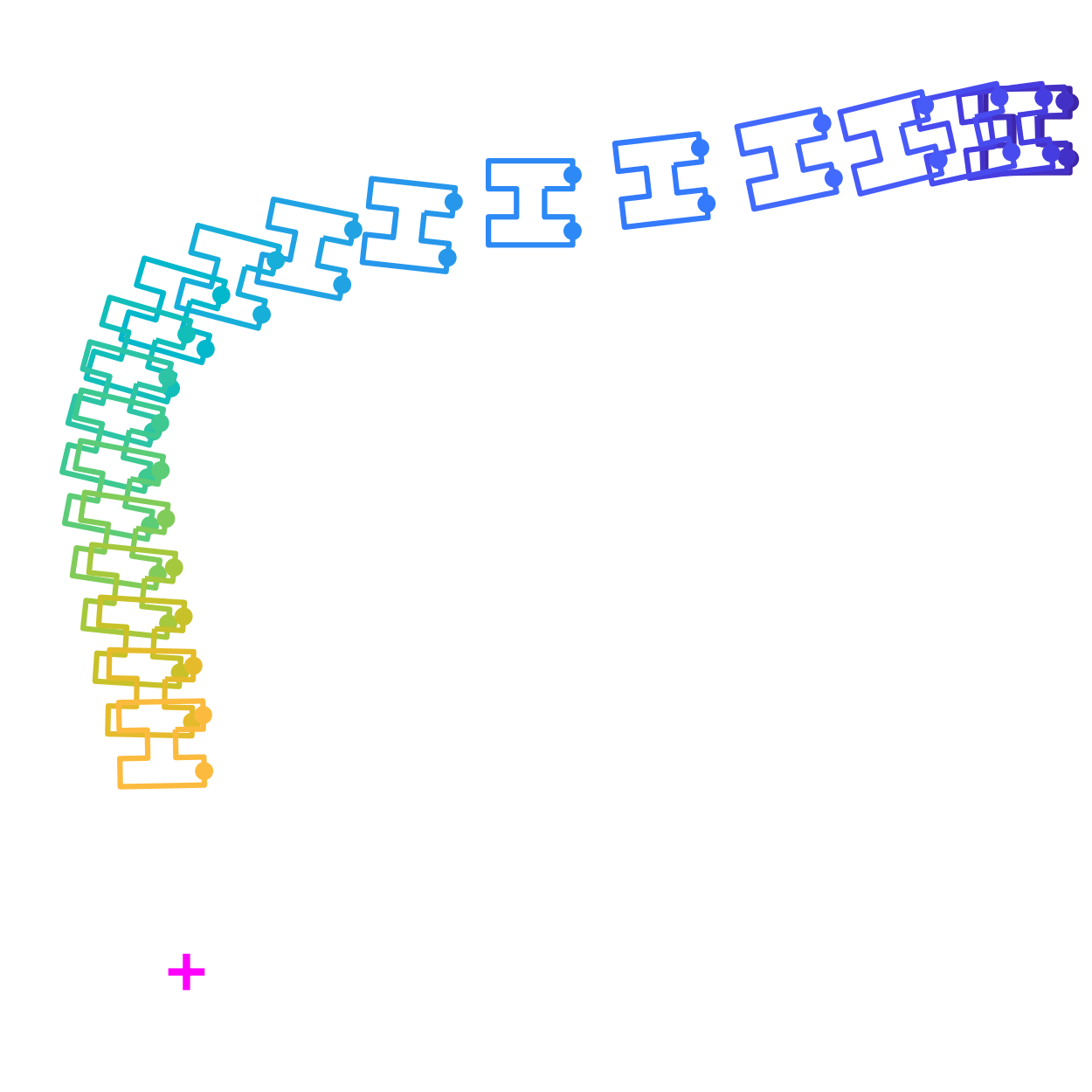}
            \end{minipage}

            \begin{minipage}{0.3\textwidth}
                \includegraphics[width=\columnwidth]{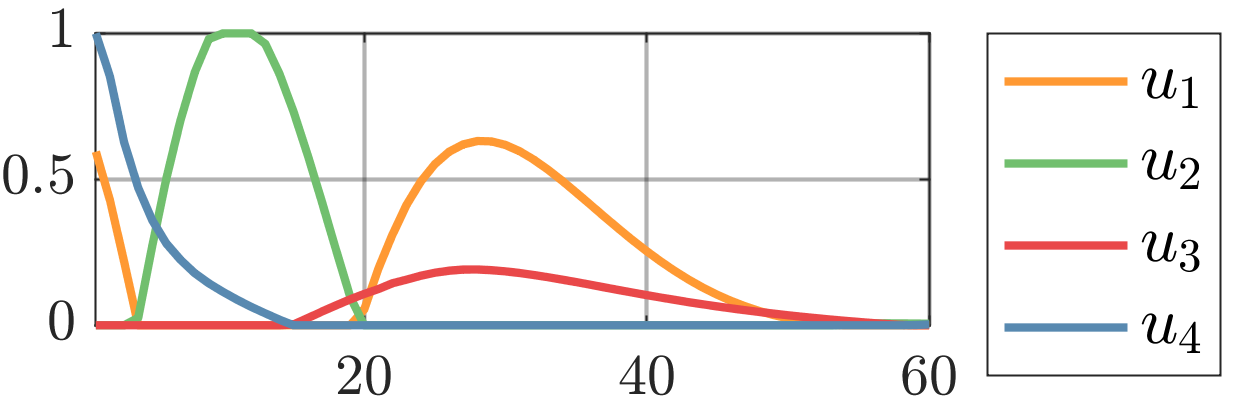}
                \includegraphics[width=\columnwidth]{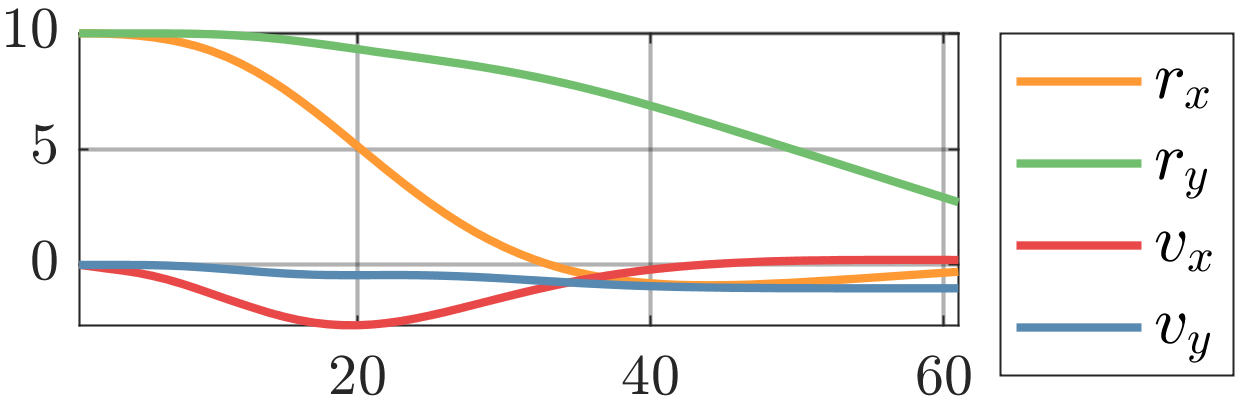}
                \includegraphics[width=\columnwidth]{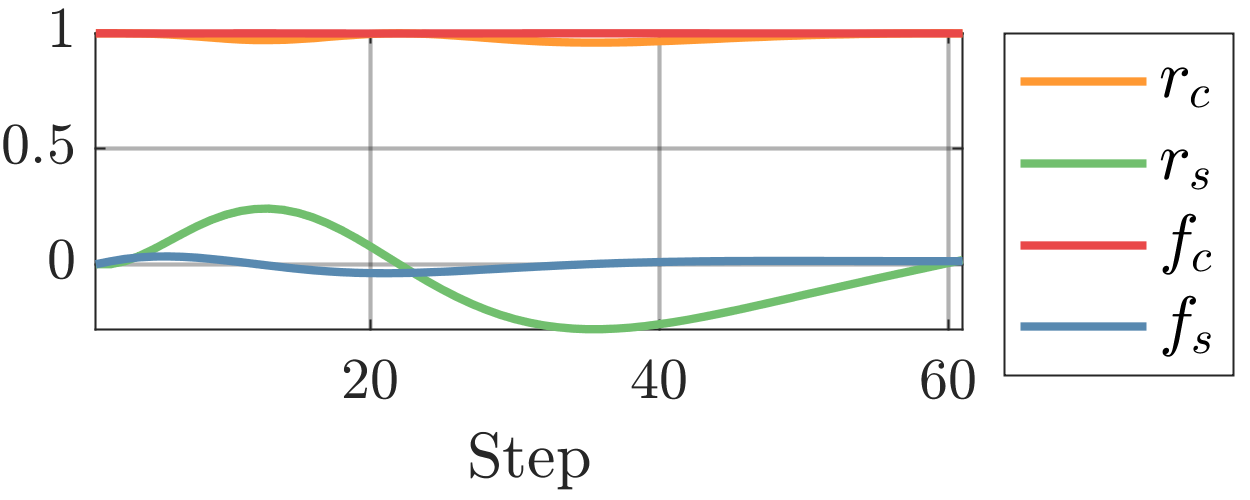}
            \end{minipage}
        \end{tabular}
    \end{minipage}

    \vspace{-3mm}
    \caption{Two globally optimal trajectories for the flying robot problem.}
    \vspace{-8mm}
\end{figure}


\section{Conclusion}
\label{sec:conclusion}
We presented \nameshort, a new framework for fast and certifiable trajectory optimization. \nameshort contains two modules: a C++ package that generates sparse moment relaxations, and a first-order ADMM-based SDP solver \cuadmm directly implemented in CUDA. Our C++ package is two orders of magnitude faster than existing Matlab packages, and our \cuadmm solves large-scale SDPs far beyond the reach of existing solvers. Moreover, we demonstrated the potential of real-time certifiable trajectory optimization in inverted pendulum using data-driven warmstarts. Several directions are worth exploring in future works, such as moment-cone-specific conic solvers and differentiable ADMM for integration with deep learning. We believe the time is now to make Lasserre's moment-SOS hierarchy a practical computational tool for robotics.


\clearpage
{\bf Acknowledgments}. We thank Tianyun Tang, Kim-Chuan Toh, Jie Wang and Jean B. Lasserre for discussion about trajectory optimization and moment-SOS hierarchy. We thank Xin Jiang for discussion about ADMM. We thank Brian Plancher and Emre Adabag for offering advice for CUDA programming.

%
%
%
\bibliographystyle{splncs04}
\bibliography{refs.bib}
%

\appendix

\section{Related Works}
\label{sec:relatedworks}

\subsection{Trajectory optimization: global solvers}
To achieve global optimality in nonconvex trajectory optimization, semidefinite relaxation techniques are widely adopted~\cite{khadir2021icra-piecewiselinear-motionplanning,teng2023arxiv-geometricmotionplanning-liegroup,huang2024arxiv-sparsehomogenization,graesdal2024arxiv-tightconvexrelax-contactrich}. \cite{khadir2021icra-piecewiselinear-motionplanning} uses a piecewise linear kinematics model and Moment-SOS Hierarchy to search for a collision-free path asymptotically, but finite convergence is not observed. \cite{teng2023arxiv-geometricmotionplanning-liegroup} first reports finite convergence when applying sparse second-order Moment-SOS Hierarchy to geometric motion planning. \cite{huang2024arxiv-sparsehomogenization} generalizes the results from \cite{teng2023arxiv-geometricmotionplanning-liegroup} to unbounded domains using sparse homogenization techniques. \cite{graesdal2024arxiv-tightconvexrelax-contactrich} reformulates the contact-rich manipulation task as a nonconvex quadratic constrained quadratic programming (QCQP) problem, then applies Shor's relaxation with redundant constraints, though the suboptimality gap remains around $10\%$. This line of works mainly focus on problem reformulation and using existing solvers for computing solutions, leaving the design of problem-specific SDP solvers to facilitate computation as an untapped area. Our work precisely aims at improving the computational solvers.
Another line of research focuses on mixed integer programming~\cite{marcucci2020arxiv-warmstart-mixedinteger-mpc,deits2015icra-mixedintegerprogramming-uav,ding2020iros-motionplanning-multilegged-mixedinteger}. Due to the combinatorial nature of the problem formulation, these methods do not scale well for long time horizon problems. Notably, some sampling-based planning methods~\cite{karaman2011ijrr-samplingbased-optimalmotionplanning,lavalle2006book-planningalgorithms}, such as RRT$^\star$, can also be viewed as global solvers, being probabilistically complete.

\subsection{Trajectory optimization: other methods}
In trajectory optimization, a common approach is to use or develop gradient-based local solvers to find local solutions~\cite{posa2014ijrr-traopt-directmethod-contact,kelly2017siam-intro-directcollocation,howell2019iros-altro,howell2022ral-trajopt-optimizationbased-dynamics,adabag2024arxiv-mpcgpu}\shucheng{\cite{yunt2007combined,yunt2006-opttraj-planning-structure-variant}}. Various nonlinear programming algorithms are empolyed, such as sequential quadratic programming (SQP)~\cite{posa2014ijrr-traopt-directmethod-contact}, interior point method~\cite{howell2022ral-trajopt-optimizationbased-dynamics}, augmented Lagrangian method (ALM)~\cite{howell2019iros-altro}, and preconditioned conjugate gradient (CG)~\cite{adabag2024arxiv-mpcgpu}. Zeroth-order methods, like evolutionary algorithms~\cite{hyatt2020ral-realtime-nmpc-gpu}, stochastic optimization methods~\cite{kalakrishnan2011irca-stomp}, and rule-based planning~\cite{heinrich2015iros-traopt-motionuncertainty-gpu}, also complement gradient-based methods. Despite the rise of global solvers, local solvers remain essential for certifying global optimality by providing upper bounds of the globally optimal objective values. Indeed, our \nameshort framework does not aim to replace local solvers, but instead uses local solvers to help compute a suboptimality certificate.

\subsection{Polynomial optimization and Moment-SOS hierarchy}
As a general modeling and optimization tool, polynomial optimization problems (POPs) are prevalent in various fields such as control~\cite{majumdar2013icra-controldedsign-sos}, perception~\cite{yang2022pami-outlierrobust-geometricperception}, and optimal transport~\cite{mula2022arxiv-momentsos-optimal-transport}. 
The celebrated Moment-SOS Hierarchy~\cite{lasserre2001siopt-global} offers a powerful tool to solve POP to global optimality asymptotically. By leveraging the duality between moment relaxation and sum-of-squares (SOS) relaxation, POP is solved by a series of monotonically growing SDPs. Under certain constraint qualification conditions~\cite{nie2023siopt-moment-momentpolynomialopt}, finite convergence can be observed~\cite{yang2022pami-outlierrobust-geometricperception,teng2023arxiv-geometricmotionplanning-liegroup}. To enhance scalability, researchers exploit various sparsity patterns, including correlative sparsity~\cite{lasserre2006msc-correlativesparse}, term sparsity~\cite{wang2021siam-tssos}, and ideal sparsity~\cite{korda2023mp-ideal-sparsity}, which significantly improves computation. \shucheng{Notably,~\cite{fantuzzi2024siam-global-pop-integral-functionals} utilizes correlative sparsity in POP to efficiently solve PDEs.}

\subsection{Semidefinite programming (SDP) solvers}
For small to medium-scale general SDPs, the interior point method~\cite{helmberg1996siam-interiorpoint-sdp} is the preferred choice due to its robustness and high accuracy, with several academic and commercial solvers available~\cite{tutuncu2003mp-sdpt3-sdpsolver,sturm1999oms-sedumi-sdpsolver,aps2019ugrm-mosek-sdpsolver}. As SDP size increases, first-order methods, particularly ADMM-style algorithms~\cite{wen2010mp-admmsdp,chen2017mp-sgsadmm,li2018mp-sgs-ccqp,chen2021mp-alm-admm-equivalence}, become more popular due to their affordable per-iteration time and memory cost. Among these, sGS-ADMM~\cite{chen2017mp-sgsadmm,li2018mp-sgs-ccqp} demonstrates superior empirical performance. Several academic first-order SDP solvers are available~\cite{odonoghue2023-scs-sdpsolver,zheng2017ifac-cdcs-sdpsolver,yang2015mp-sdpnalplus-sdpsolver}. In real-world applications~\cite{yang2022pami-outlierrobust-geometricperception,teng2023arxiv-geometricmotionplanning-liegroup}, many SDPs exhibit a low-rank property, prompting the development of specialized algorithms~\cite{burer2003springer-bm,tang2023arxiv-feasible-lowranksdp} and solvers~\cite{yang2023mp-stride,wang2023arxiv-manisdp}. However, these algorithms often underperform for large-scale multiple-cone SDPs. While GPU-based interior point SDP solvers have been deployed on supercomputers~\cite{fujisawa2012sc-extremely-sdp-interiorpoint}, all existing first-order SDP solvers are currently implemented on CPUs.


\section{Insufficiency of First-Order Relaxation}
\label{app:sec:firstorder}

We randomly selected $10$ initial states in the pendulum problem to compare the tightness of sparse first-order and second-order relaxations. The mean and median of the SDP objective values for sparse first-order relaxation were $13.7$ and $13.8$, respectively. Using first-order relaxation, none of the optimal SDP solutions (\ie the moment matrices) led to successful rounding of a feasible solution in \fmincon. In stark contrast, for sparse second-order relaxation, the mean and median of the optimal SDP objective value were $61.3$ and $59.2$, respectively, much higher than the first-order relaxation. Moreover, all of the second-order relaxation solutions come with a suboptimality gap of less than $10^{-2}$.  

We also note that first-order relaxation cannot handle polynomial optimization problems beyond quadratically constrained quadratic programming (QCQP), \eg Car Back-in problem in \S\ref{sec:exp} and Van der Pol problem in~\cite{huang2024arxiv-sparsehomogenization}.

\section{Existence of $R_\beta$ for Suboptimality~\eqref{eq:strom:sgsadmm:suboptimality-gap}}
\label{app:sec:existenceRbeta}

The certificate~\eqref{eq:strom:sgsadmm:suboptimality-gap} holds if an upper bound $R_\beta$ exists for the trace of each $X(\hat{z})_\beta$. The next theorem guarantees the existence of $R_\beta$. 

\begin{theorem}[Existence of $R_\beta$]
    \label{thm:strom:sgsadmm:existence-rbeta}
    If $\norm{\dvar(I_k)}_\infty \le R_k, \forall k \in \seqordering{N}$. Then, for $X_\beta$ corresponding to the moment matrices $M_k$, one can pick 
    \begin{align}
        R_\beta = s(|I_k|, \kappa) \cdot R_k^2,
    \end{align} 
    and for $X_\beta$ corresponding to the localizing matrices $L_{k,i}$, one can pick 
    \begin{align}
        R_\beta = \max_{\norm{\dvar(I_k)}_\infty \le R_k} \left\{ g_{k,i}(\dvar(I_k)) \right\}  \cdot s(|I_k|, \kappa - \degg{k,i}) \cdot R_k^2
    \end{align}
\end{theorem}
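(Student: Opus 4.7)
The plan is to exploit the rank-one tensor-product structure of the liftings $M_k(\dvar)$ and $L_{k,i}(\dvar)$ defined in~\eqref{eq:strom:sgsadmm:rank1-lifting}, and then apply the hypothesis $\norm{\dvar(I_k)}_\infty\le R_k$ monomial by monomial. The whole argument is essentially a trace computation followed by a uniform bound over a compact box, so no hierarchy-level tools (dualities, Positivstellens\"atze) are needed.

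First, I would handle the moment blocks. Since $M_k(\dvar)=\basis{\dvar(I_k)}{\kappa}\basis{\dvar(I_k)}{\kappa}\tran$ is rank one, its trace collapses to the squared Euclidean norm of the monomial vector:
\[
\trace{M_k(\hat{\dvar})}=\norm{\basis{\hat{\dvar}(I_k)}{\kappa}}_2^{2}=\sum_{\alpha\in\bbN^{|I_k|},\,|\alpha|\le\kappa}\hat{\dvar}(I_k)^{2\alpha}.
\]
For a feasible $\hat{\dvar}$ satisfying $\norm{\hat{\dvar}(I_k)}_\infty\le R_k$, each monomial satisfies $\hat{\dvar}(I_k)^{2\alpha}\le R_k^{2|\alpha|}$, and there are exactly $s(|I_k|,\kappa)$ index tuples $\alpha$ with $|\alpha|\le\kappa$; combined with the rescaling convention $R_k\le 1$ used throughout the experiments (\cf \S\ref{sec:exp}), this gives the stated bound $R_\beta=s(|I_k|,\kappa)\cdot R_k^{2}$ in a single line.

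Second, for the localizing blocks, the key observation is that $\hat{\dvar}$ is \emph{feasible} for the POP~\eqref{eq:strom:popsdp:chain-sparse-pop}, so $g_{k,i}(\hat{\dvar}(I_k))\ge 0$. This is exactly what licenses writing
\[
\trace{L_{k,i}(\hat{\dvar})}=g_{k,i}(\hat{\dvar}(I_k))\,\norm{\basis{\hat{\dvar}(I_k)}{\kappa-\degg{k,i}}}_2^{2}
\]
as a product of two nonnegative quantities that may be bounded independently. The scalar factor $g_{k,i}$ is a polynomial, hence continuous on the compact hypercube $\{\norm{\dvar(I_k)}_\infty\le R_k\}$, so its maximum is attained and finite; the vector factor is bounded by the same monomial count argument as above. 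Multiplying the two yields the announced $R_\beta$.

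There is no genuine mathematical obstacle here; the argument is bookkeeping. The two items I would be careful about are (i) the indexing convention under which $\beta$ iterates over both moment and localizing blocks (and thus which of the two formulas applies), and (ii) making sure the factorization in the localizing case is invoked only at feasible $\hat{\dvar}$, because at an arbitrary point of the hypercube $g_{k,i}$ can be negative and the product bound breaks. A final minor remark worth including is that the constants $R_\beta$ need not be tight: any valid upper bound on $\trace{X(\hat{\dvar})_\beta}$ suffices for the lower bound~\eqref{eq:strom:sgsadmm:valid-lowerbound}, so in practice one can use closed-form estimates of $\max g_{k,i}$ derived directly from the polynomial expression (e.g., box constraints on controls give trivial maxima).
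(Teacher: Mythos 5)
Your proposal is correct and follows essentially the same route as the paper's proof: exploit the rank-one structure of the liftings in~\eqref{eq:strom:sgsadmm:rank1-lifting} so that $\trace{M_k(\hat{\dvar})}=\norm{\basis{\hat{\dvar}(I_k)}{\kappa}}_2^2$ is bounded by the monomial count times $R_k^2$, and for the localizing blocks factor out $g_{k,i}(\hat{\dvar}(I_k))$ and bound it by its maximum over the box $\norm{\dvar(I_k)}_\infty\le R_k$. Your added remarks on the indexing of $\beta$, on invoking the factorization only at feasible points, and on the (non-)tightness of $R_\beta$ are harmless refinements of the same argument, not a different approach.
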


\begin{proof}
    Given arbitrary $\dvar$ satisfies the assumption in~\ref{thm:strom:sgsadmm:existence-rbeta}. For moment matrices:
    \begin{align}
        \trace{M_k(\dvar)} = \trace{
            \basis{\dvar(I_k)}{\kappa} \basis{\dvar(I_k)}{\kappa}\tran
        } = \norm{\basis{\dvar(I_k)}{\kappa}}_2^2 \le s(|I_k|, \kappa) \cdot R_k 
    \end{align}
    For localizing matrices:
    \begin{subequations}
        \begin{align}
            \trace{L_{k,i}(\dvar)} & = \trace{
                g_{k,i}(\dvar(I_k)) \cdot \basis{\dvar(I_k)}{\kappa - \degg{k,i}} \basis{\dvar(I_k)}{\kappa - \degg{k,i}}\tran
            } \\
            & = g_{k,i}(\dvar(I_k)) \cdot \norm{\basis{\dvar(I_k)}{\kappa - \degg{k,i}}}_2^2 \\
            & \le \max_{\norm{\dvar(I_k)}_\infty \le R_k} \left\{ g_{k,i}(\dvar(I_k)) \right\}  \cdot s(|I_k|, \kappa - \degg{k,i}) \cdot R_k^2
        \end{align}
    \end{subequations}
\end{proof}

Essentially, as long as the variables in the original trajectory optimization~\eqref{eq:intro:trajopt} are bounded, we can compute a certificate of suboptimality from the output of \sgsadmm according to~\eqref{eq:strom:sgsadmm:suboptimality-gap}.

\section{POP-SDP Conversion Package Comparison}
\label{app:sec:conversion}
We compare our sparse moment relaxation package (Matlab interface) with three state-of-the-art SOS relaxation packages. \sostools and \yalmip~\cite{lofberg2004cacsd-yalmip} are implemented in Matlab, while \tssos is implemented in Julia. We evaluate the performance of these four conversion packages on the pendulum problem, as shown in Table~\ref{tab:exp:gen:conversion-speed}. Our conversion package is the only one capable of converting the pendulum problem to an SDP in real-time in MATLAB.


\begin{table}[t]
    \centering
    \begin{tabular}{|c|c|c|c|c|}
        \hline
        & \yalmip (Matlab) & \sostools (Matlab) & \tssos (Julia) & Ours (Matlab) \\
        \hline
        $N = 4$ & $8.18$s & $3.39$s & $2.40$s & $\bm{0.09}$\textbf{s} \\
        \hline 
        $N = 30$ & $68.89$s & $>30$min & $2.51$s & $\bm{0.32}$\textbf{s}  \\
        \hline
    \end{tabular}
    \vspace{1mm}
    \caption{POP-SDP conversion time of $4$ conversion packages in the inverted pendulum problem. When $N = 30$, \sostools takes more than $30$ miniutes and takes more than $300$GB memory.
    \label{tab:exp:gen:conversion-speed}}
\end{table}

\section{Experimental Details}
\label{app:sec:exp}

\subsection{Inverted Pendulum}
\label{app:subsec:exp:p}

\textbf{Dynamics and constraints.} For a pendulum with continuous dynamics
\begin{align}
    \label{eq:exp:p:con-dyn}
    ml^2 \ddot{\theta} = u - mgl \sin \theta - b \dot{\theta}
\end{align}
From~\cite{lee2008thesis-computationalgeometricmechanics}, the discretized nonlinear dynamics $F_k$ in~\eqref{eq:intro:trajopt-generalform-dyn} and constraints $\calC_k$ in~\eqref{eq:intro:trajopt-generalform-con} at time step $k$ are:
\begin{subequations}
    \begin{align}
        \label{eq:exp:p:dis-dyn-constraints}
        & m l^2 \cdot \frac{1}{\dt^2}(\fs{k} - \fs{k-1}) = u_{k-1} - m g l \rs{k-1} - b \cdot \frac{1}{\dt} \fs{k-1} \\
        & \rc{k} = \rc{k-1} \fc{k-1} - \rs{k-1} \fs{k-1} \label{eq:exp:p:dis-dyn-constraints-rcupdate} \\
        & \rs{k} = \rs{k-1} \fc{k-1} + \rc{k-1} \fs{k-1} \label{eq:exp:p:dis-dyn-constraints-rsupdate} \\
        & \rc{k}^2 + \rs{k}^2 = 1 \label{eq:exp:p:dis-dyn-constraints-so2-r} \\
        & \fc{k}^2 + \fs{k}^2 = 1 \label{eq:exp:p:dis-dyn-constraints-so2-f} \\
        & \fc{k} \ge f_{c, \min} \label{eq:exp:p:dis-dyn-constraints-fcmin} \\
        & u_{\max}^2 - u_k^2 \ge 0 
    \end{align}
\end{subequations}
\eqref{eq:exp:p:dis-dyn-constraints-fcmin} is to prevent excessively fast angular velocities. Starting from $\theta = \theta_0; \dot{\theta} = \dot{\theta}_0$, we want the pendulum to achieve $\theta = \pi, \dot{\theta} = 0$ after $N$ time steps. Figure~\ref{fig:exp:gen:sys-illustration} (a) illustrates the inverted pendulum.

\textbf{Hyepr-parameters.} Denote $x_k$ as $\left[ \rc{k} \; \rs{k} \; \fs{k} \; \fs{k} \right] \in \Real{4}$.
In all experiments, $m = 1, l = 1, b = 0.1, g = 9.8$ and $N=30, \dt = 0.1, f_{c, \min} = 0.5, u_{\max} = 5$. $P_f$ in~\eqref{eq:exp:gen:lqr-loss} is set to $1$. The number of localizing matrices is $60$. For first-order methods, \texttt{maxiter} is set to $10000$ without kNN warm starts and $100$ with kNN warm starts.

\textbf{Why do we favor global optimality?} 
We conducted two experiments demonstrating the advantages of certifiable solvers over local solvers. In the first experiment, we solved the same trajectory optimization problem using \fmincon with 10 feasible initial guesses, obtained by rolling out random control input sequences. Three out of the ten attempts converged to infeasible points. Among the remaining seven feasible trajectories, we display the three with the lowest suboptimality gaps in Figure~\ref{fig:exp:p:random}. None of these trajectories succeeded in swinging up the pendulum. In the second experiment, starting from the globally optimal trajectory, we added scaled Gaussian noise to each element: \( x(i) \leftarrow x(i) + \epsilon \calN(0, 1) \). The disturbed trajectory was then fed to \fmincon. We selected ten different \(\epsilon\) values ranging from \(10^{-1}\) to \(10\). Seven out of ten attempts converged to infeasible states, with the remaining three shown in Figure~\ref{fig:exp:p:add-noise}. Both experiments illustrate the sensitivity of local solvers to initial guesses.


\begin{figure}[t]
    \centering
    \begin{minipage}{0.7\textwidth}
        \centering
        \includegraphics[width=\columnwidth]{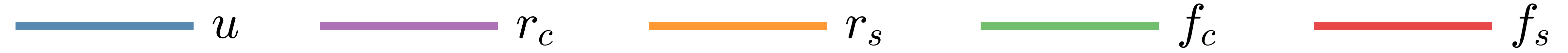}
    \end{minipage}

    \begin{minipage}{\textwidth}
        \centering
        \begin{tabular}{cccc}
            \begin{minipage}{0.24\textwidth}
                \centering
                \includegraphics[width=\columnwidth]{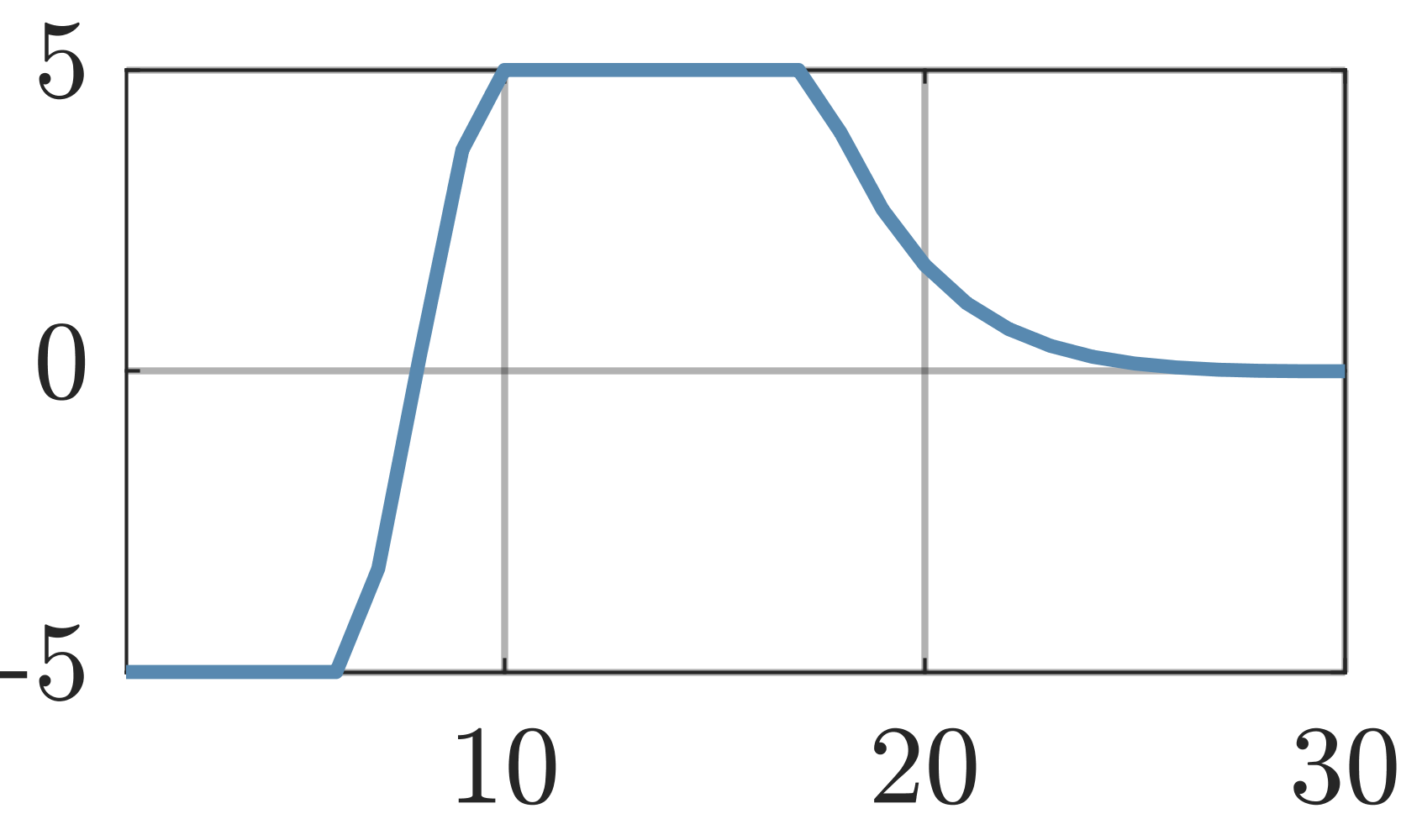}
            \end{minipage}

            \begin{minipage}{0.24\textwidth}
                \centering
                \includegraphics[width=\columnwidth]{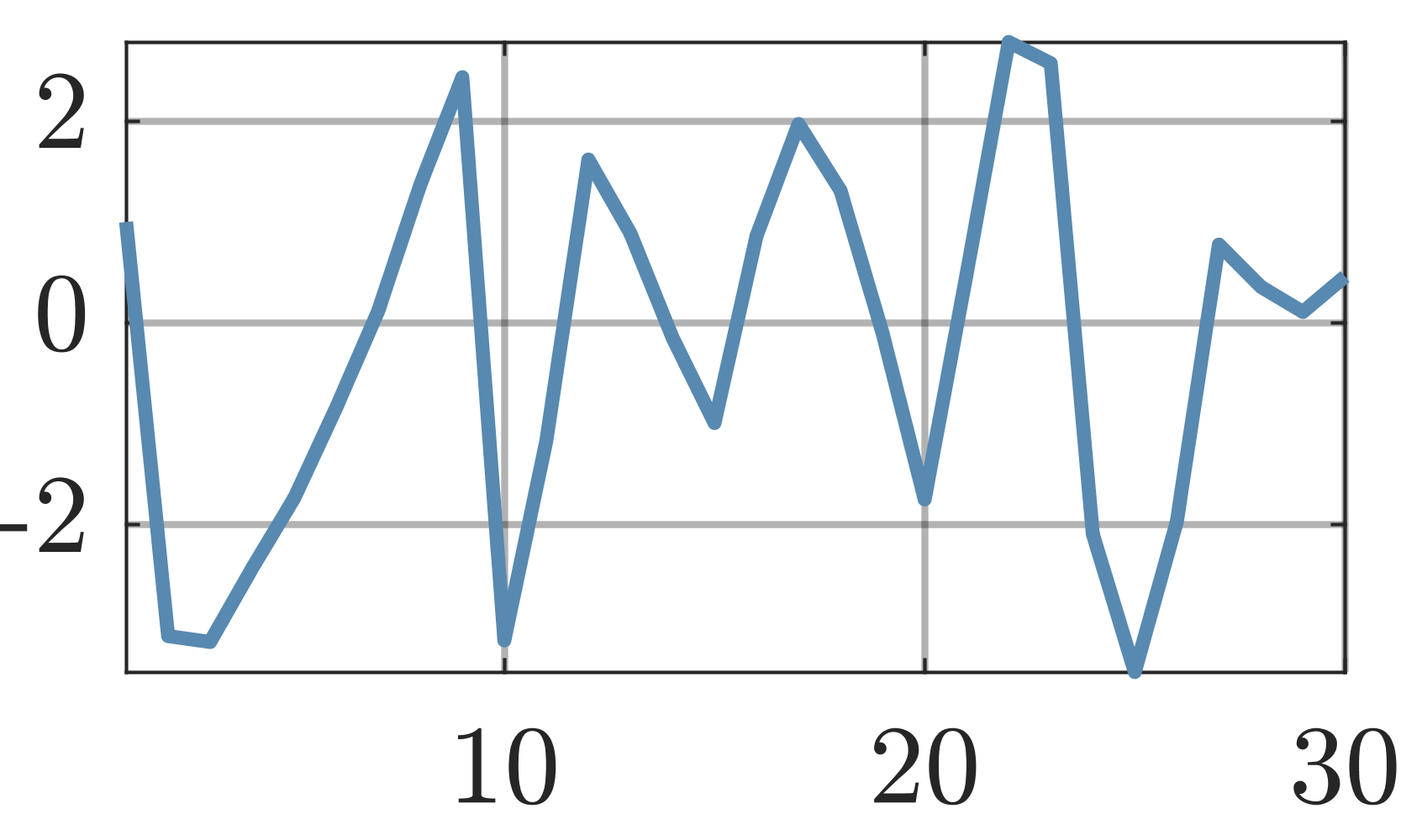}
            \end{minipage}

            \begin{minipage}{0.24\textwidth}
                \centering
                \includegraphics[width=\columnwidth]{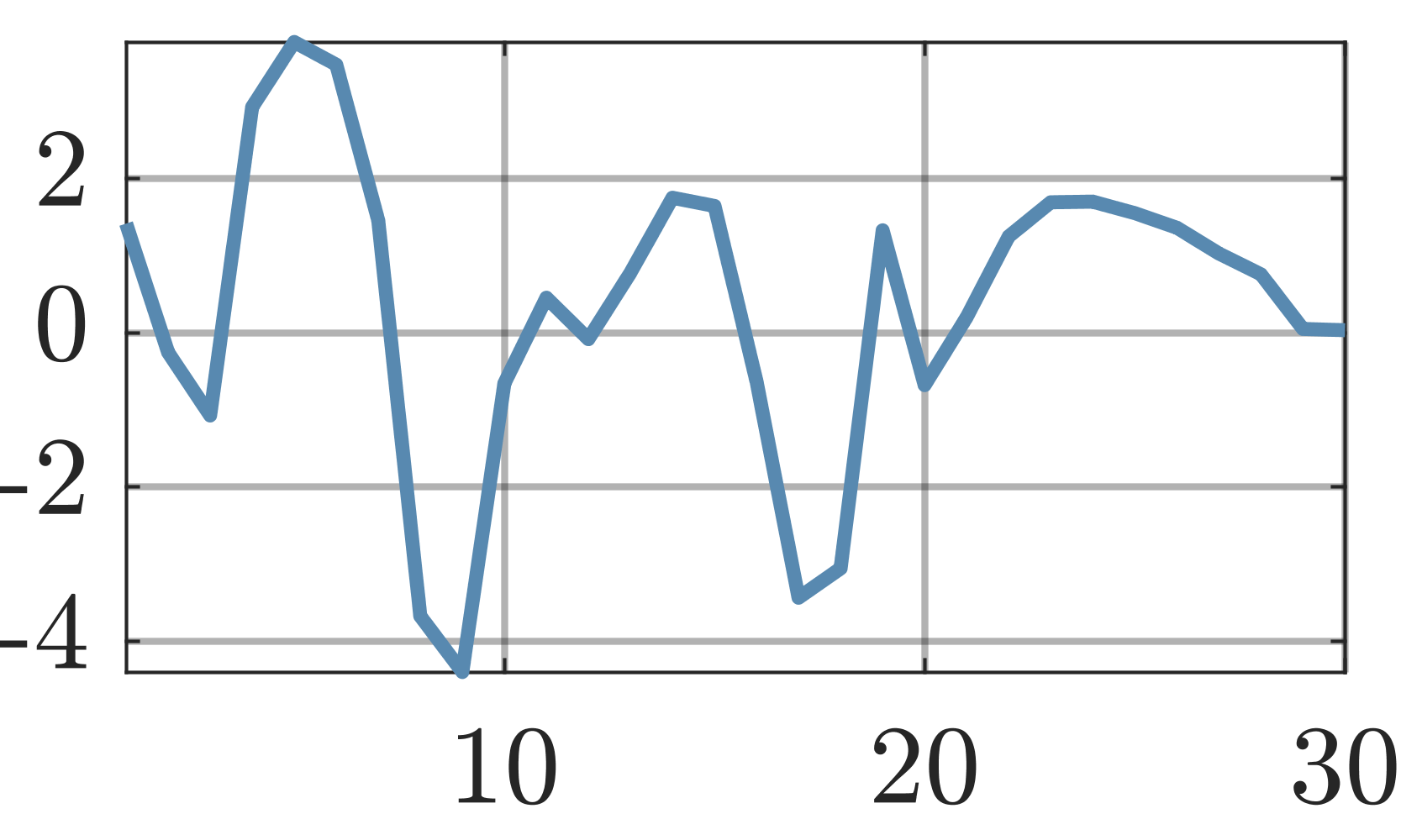}
            \end{minipage}

            \begin{minipage}{0.24\textwidth}
                \centering
                \includegraphics[width=\columnwidth]{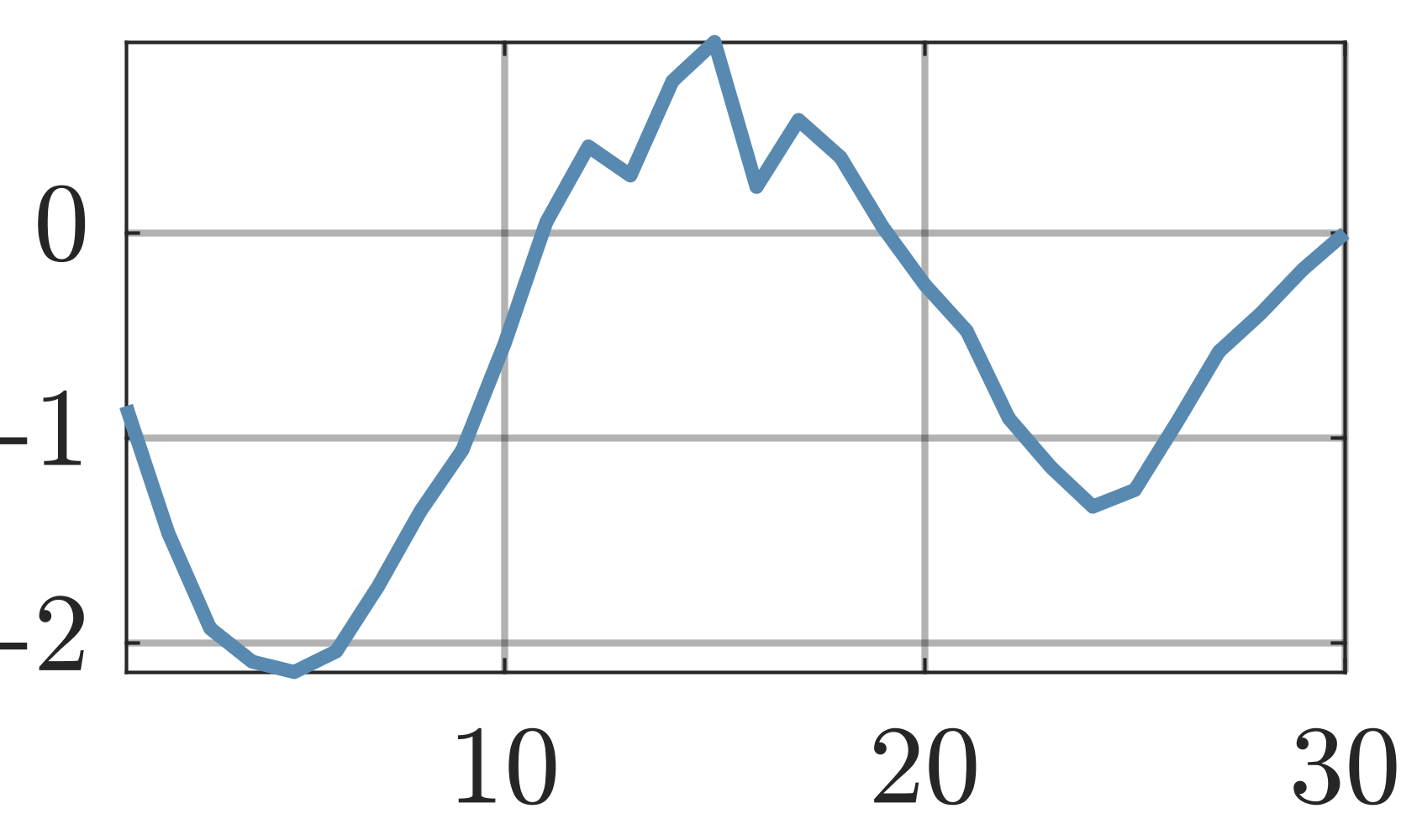}
            \end{minipage}
        \end{tabular}
    \end{minipage}

    \begin{minipage}{\textwidth}
        \centering
        \begin{tabular}{cccc}
            \begin{minipage}{0.24\textwidth}
                \centering
                \includegraphics[width=\columnwidth]{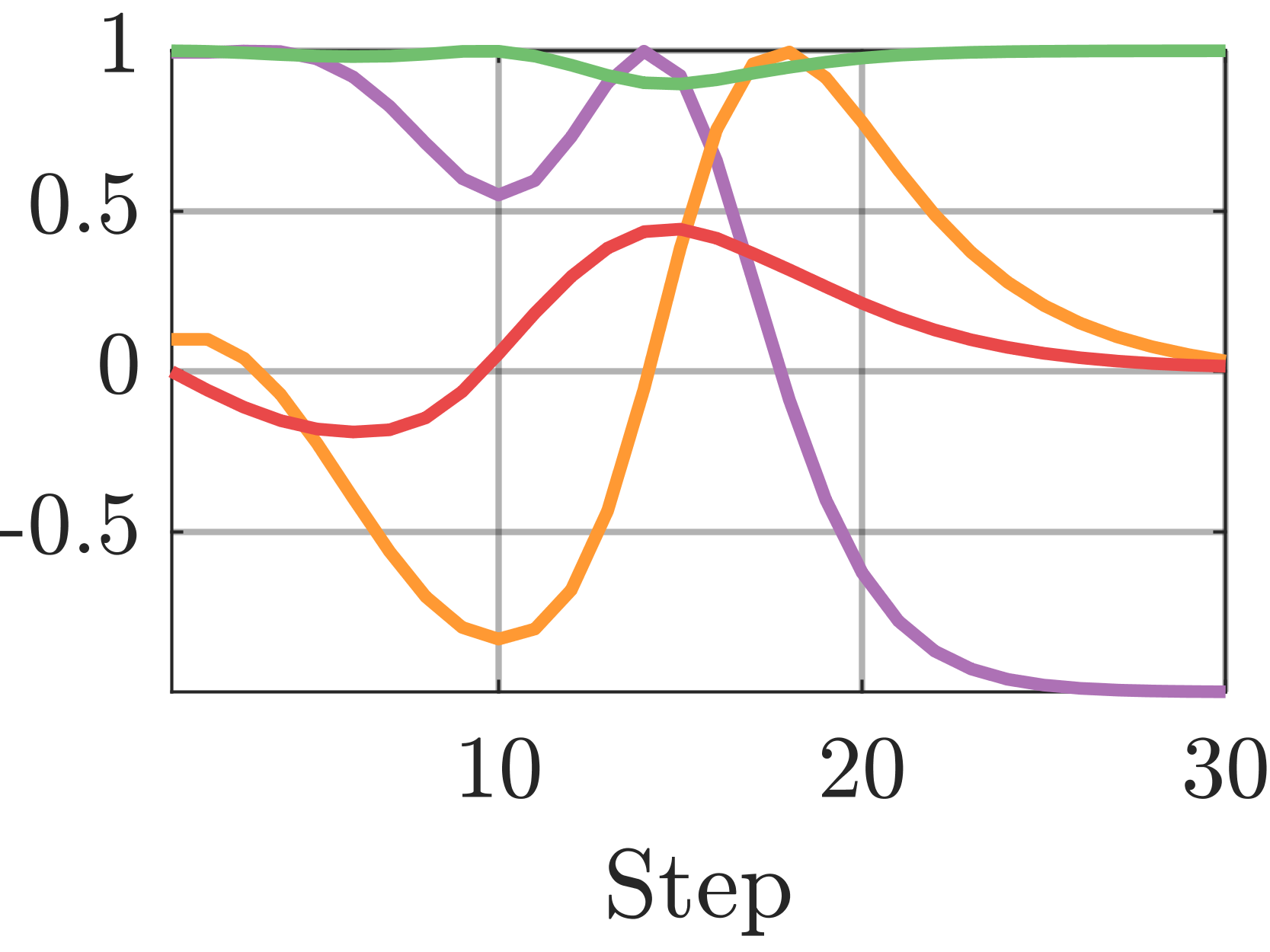}
                Global Optimal: $\xi = 6.6 \times 10^{-4}$
            \end{minipage}

            \begin{minipage}{0.24\textwidth}
                \centering
                \includegraphics[width=\columnwidth]{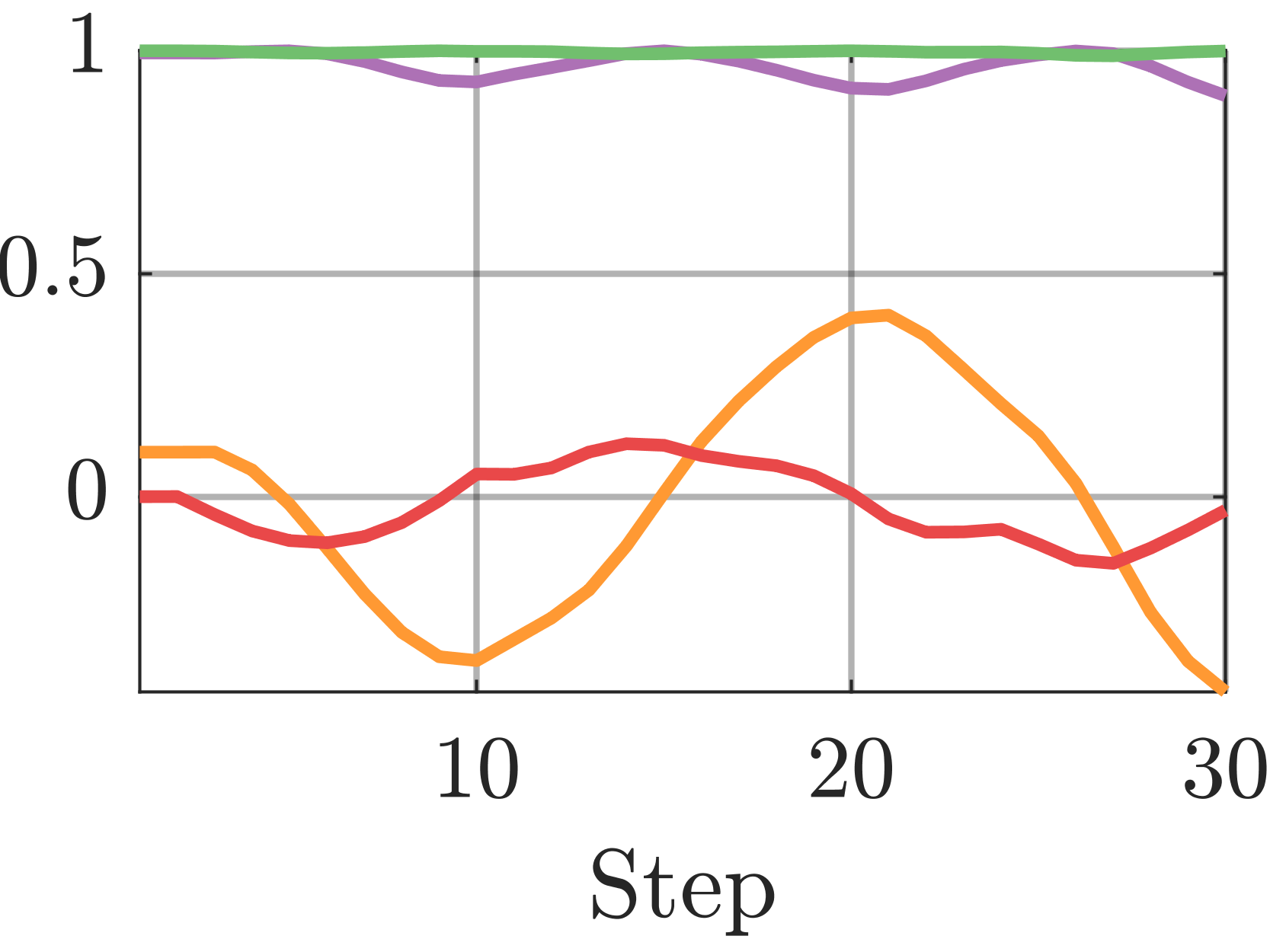}
                Initial Guess 1: $\xi = 2.0 \times 10^{-1}$
            \end{minipage}

            \begin{minipage}{0.24\textwidth}
                \centering
                \includegraphics[width=\columnwidth]{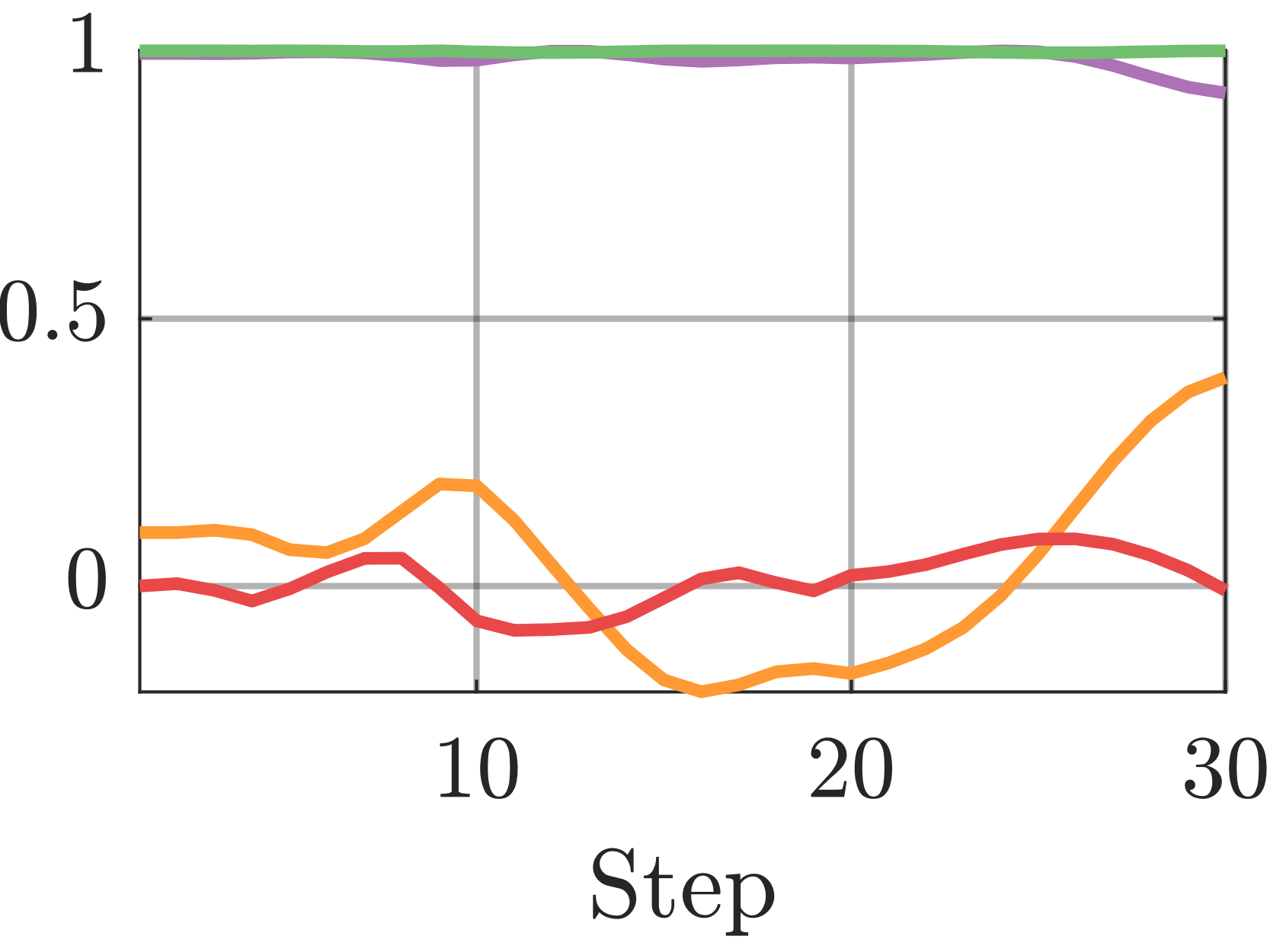}
                Initial Guess 2: $\xi = 2.1 \times 10^{-1}$
            \end{minipage}

            \begin{minipage}{0.24\textwidth}
                \centering
                \includegraphics[width=\columnwidth]{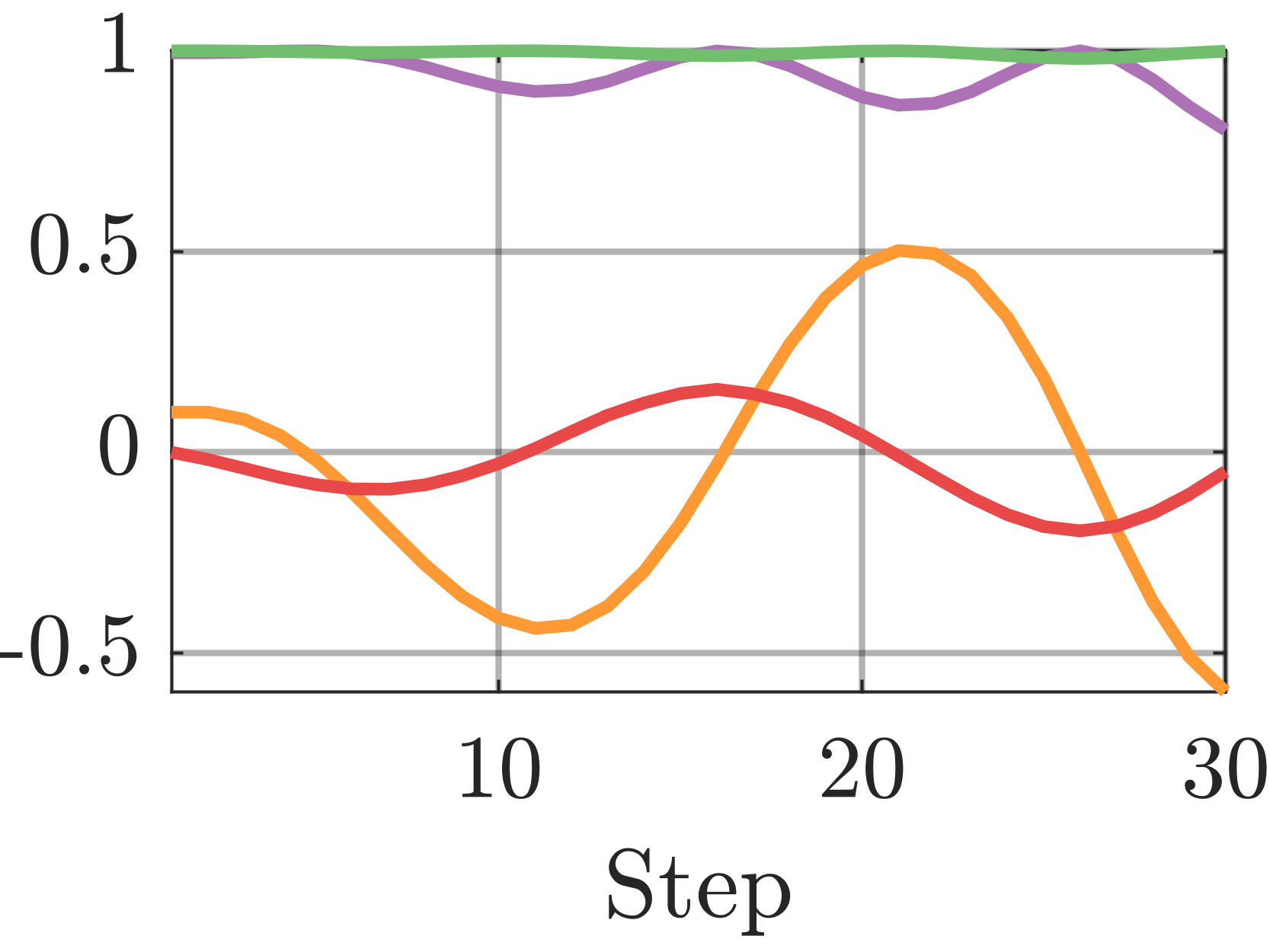}
                Initial Guess 3: $\xi = 1.8 \times 10^{-1}$
            \end{minipage}
        \end{tabular}
    \end{minipage}

    \caption{Random initial guess VS. optimal solution in the pendulum case. \label{fig:exp:p:random}}
    \vspace{3mm}
\end{figure}

\begin{figure}[h]
    \centering
    \begin{minipage}{0.7\textwidth}
        \centering
        \includegraphics[width=\columnwidth]{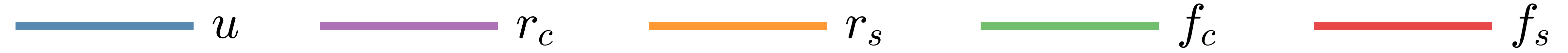}
    \end{minipage}

    \begin{minipage}{\textwidth}
        \centering
        \begin{tabular}{cccc}
            \begin{minipage}{0.24\textwidth}
                \centering
                \includegraphics[width=\columnwidth]{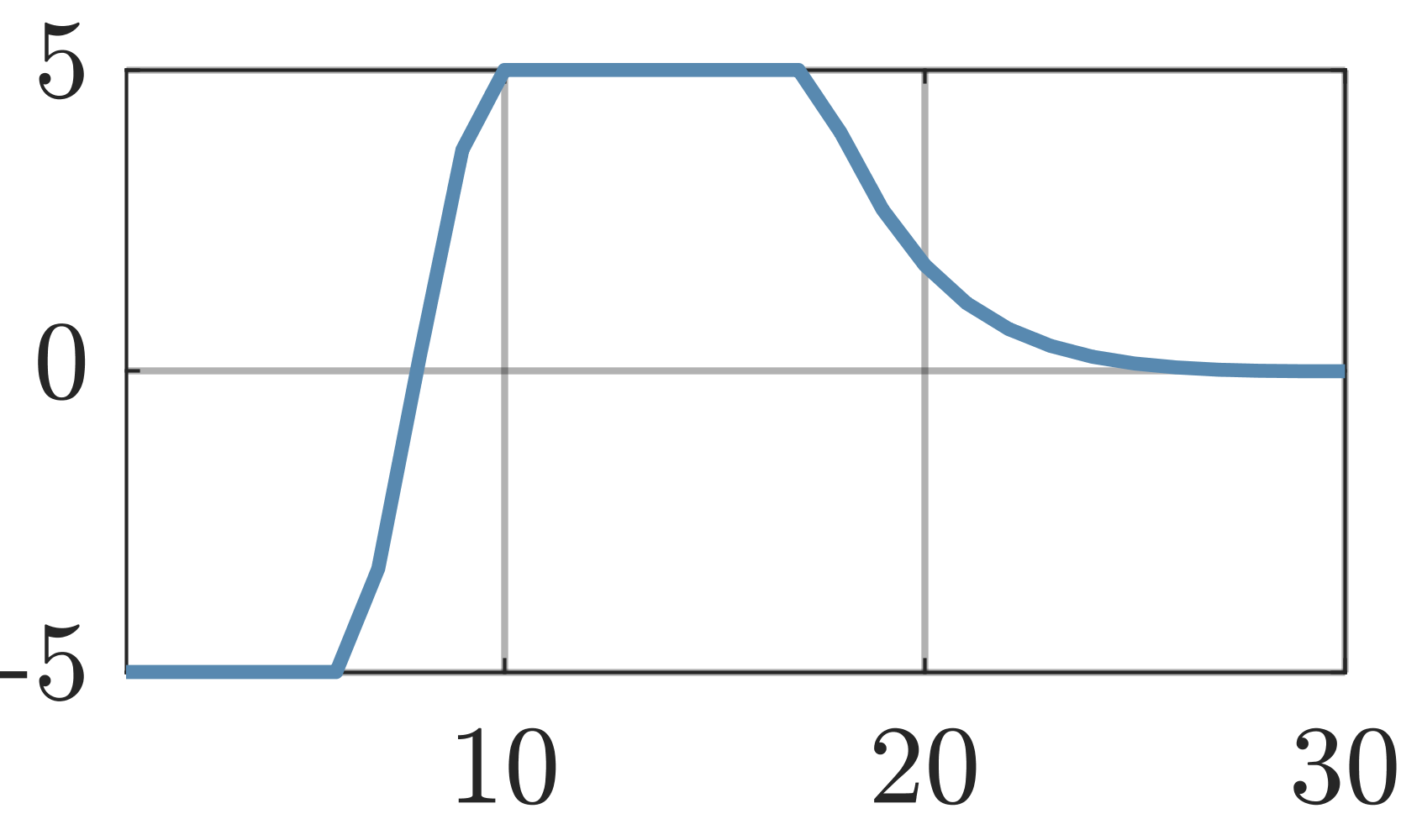}
            \end{minipage}

            \begin{minipage}{0.24\textwidth}
                \centering
                \includegraphics[width=\columnwidth]{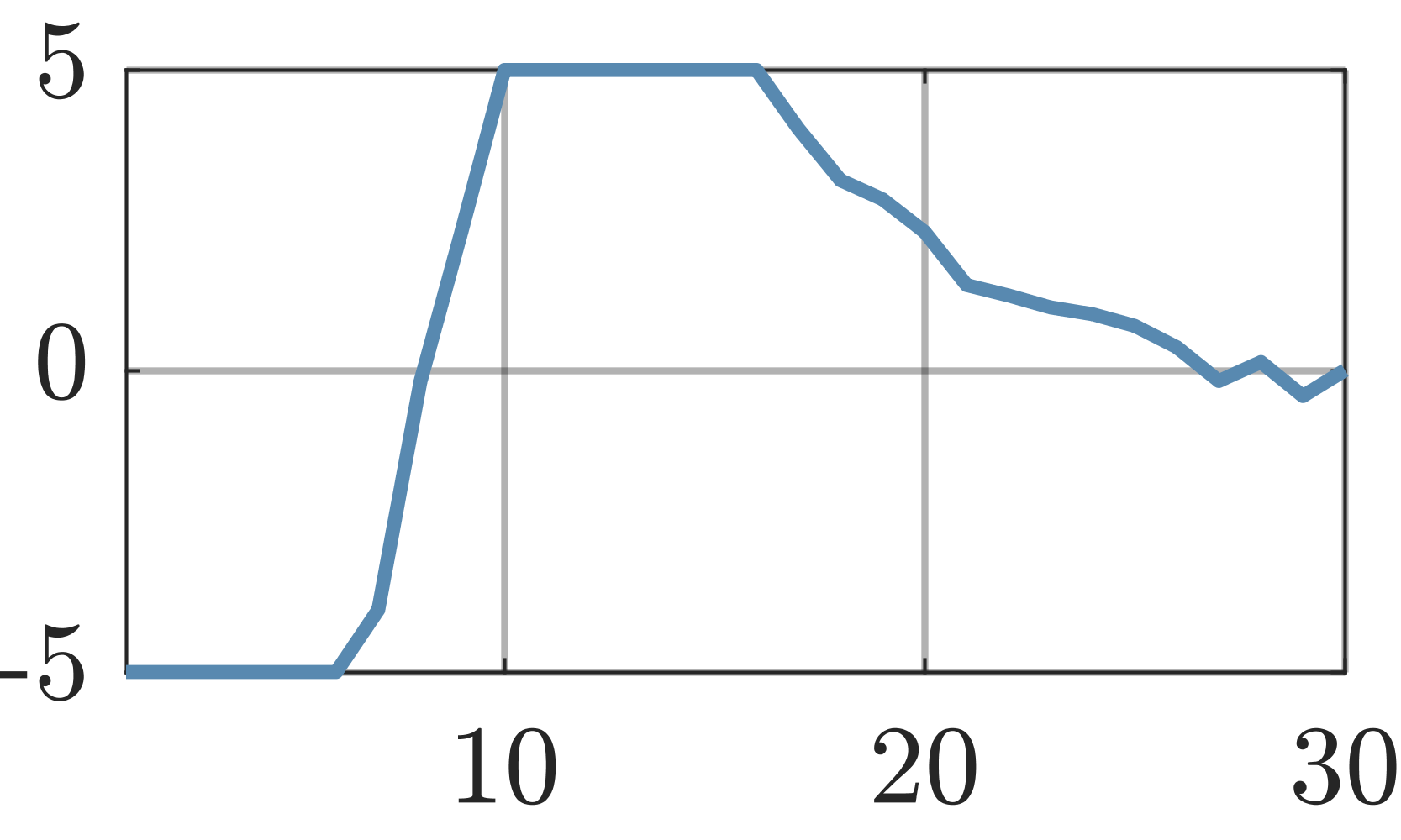}
            \end{minipage}

            \begin{minipage}{0.24\textwidth}
                \centering
                \includegraphics[width=\columnwidth]{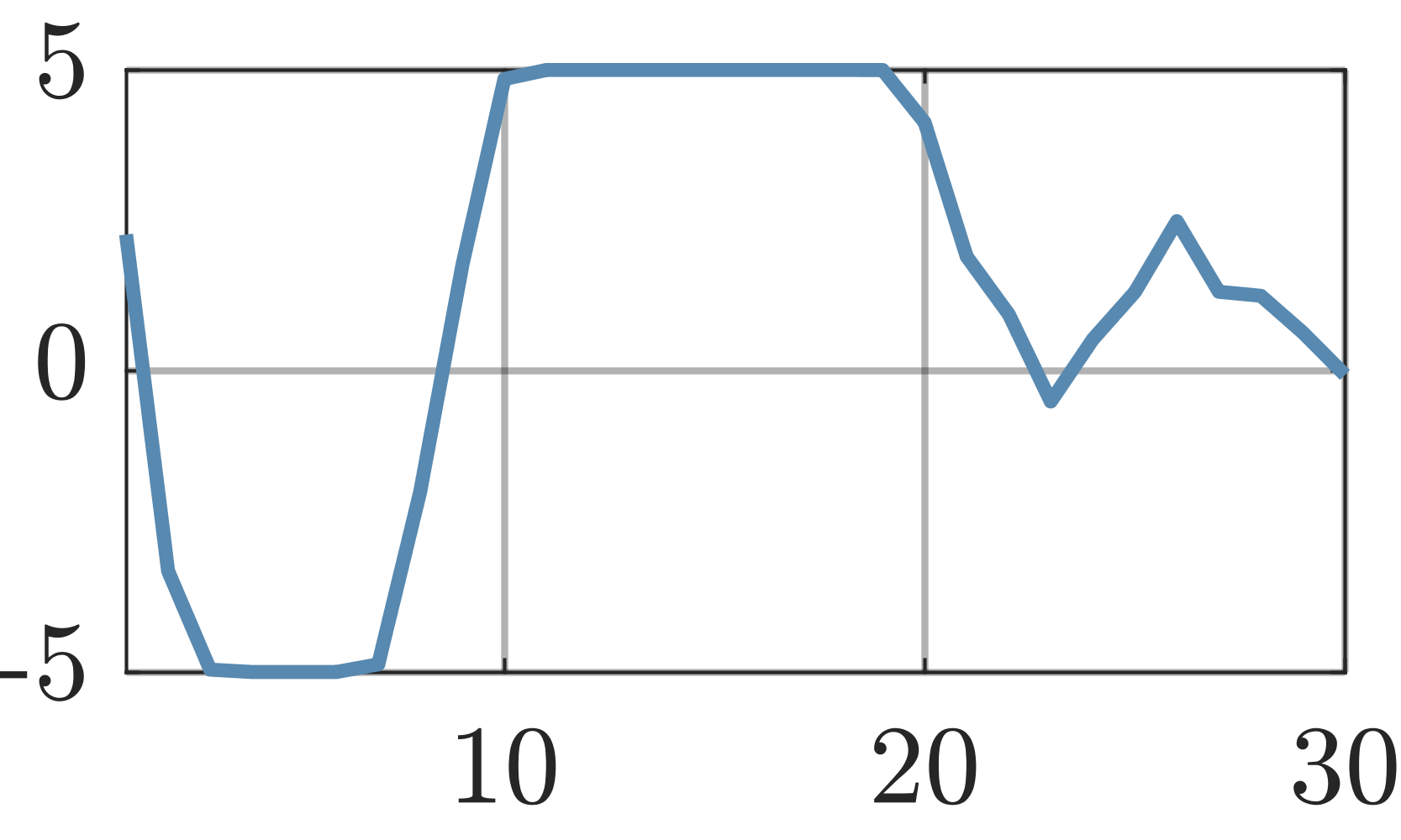}
            \end{minipage}

            \begin{minipage}{0.24\textwidth}
                \centering
                \includegraphics[width=\columnwidth]{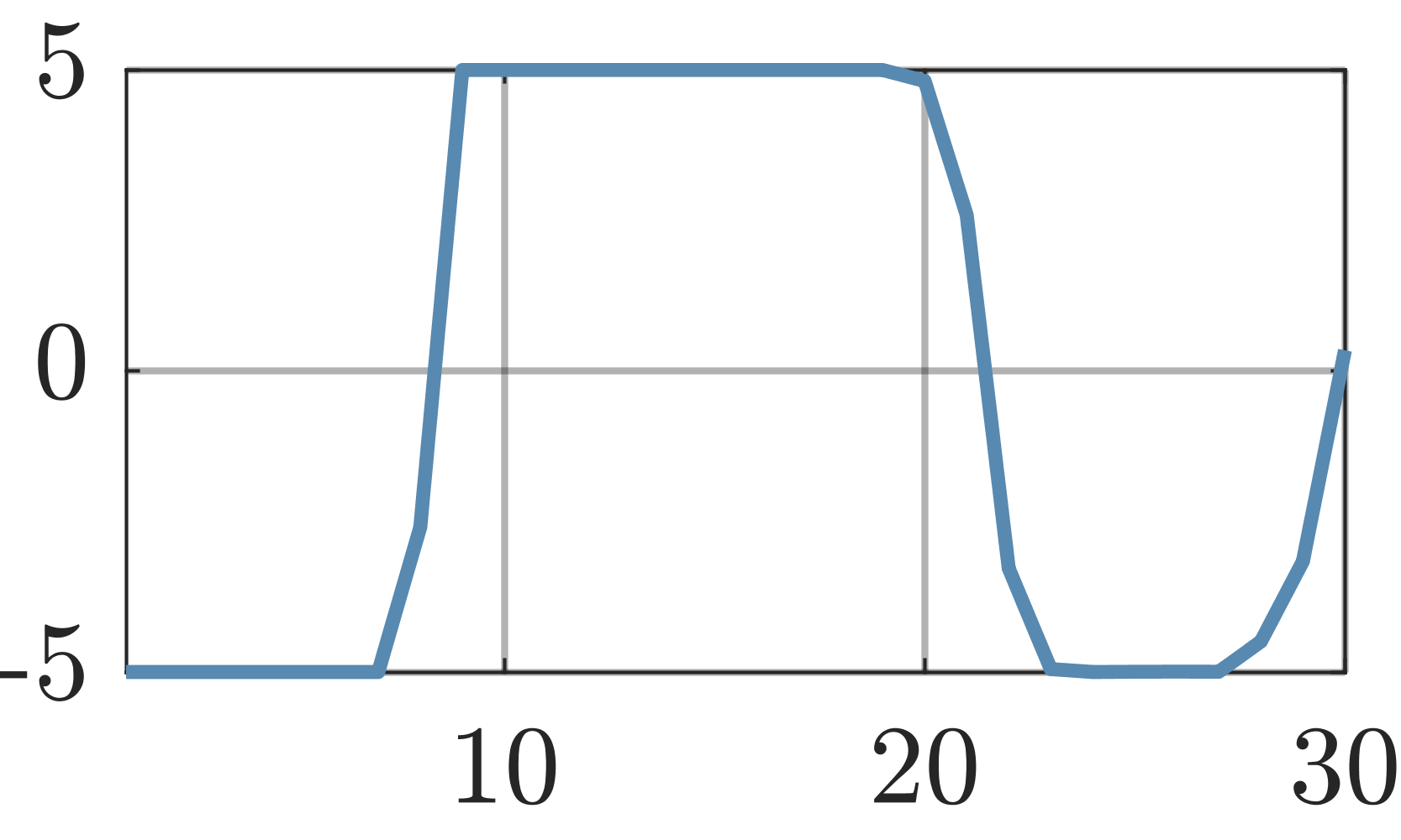}
            \end{minipage}
        \end{tabular}
    \end{minipage}

    \begin{minipage}{\textwidth}
        \centering
        \begin{tabular}{cccc}
            \begin{minipage}{0.24\textwidth}
                \centering
                \includegraphics[width=\columnwidth]{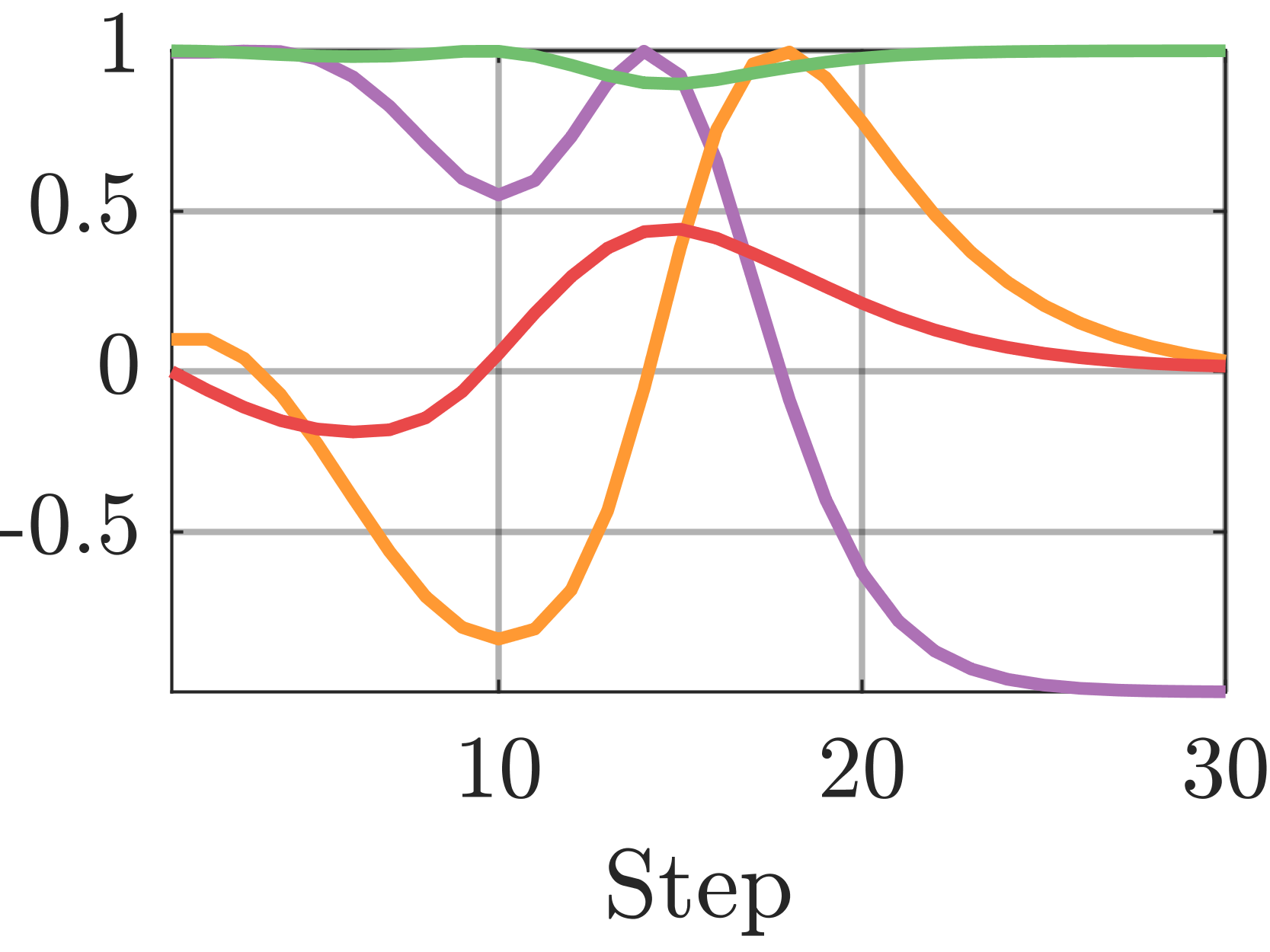}
                Global Optimal: $\xi = 6.6 \times 10^{-4}$
            \end{minipage}

            \begin{minipage}{0.24\textwidth}
                \centering
                \includegraphics[width=\columnwidth]{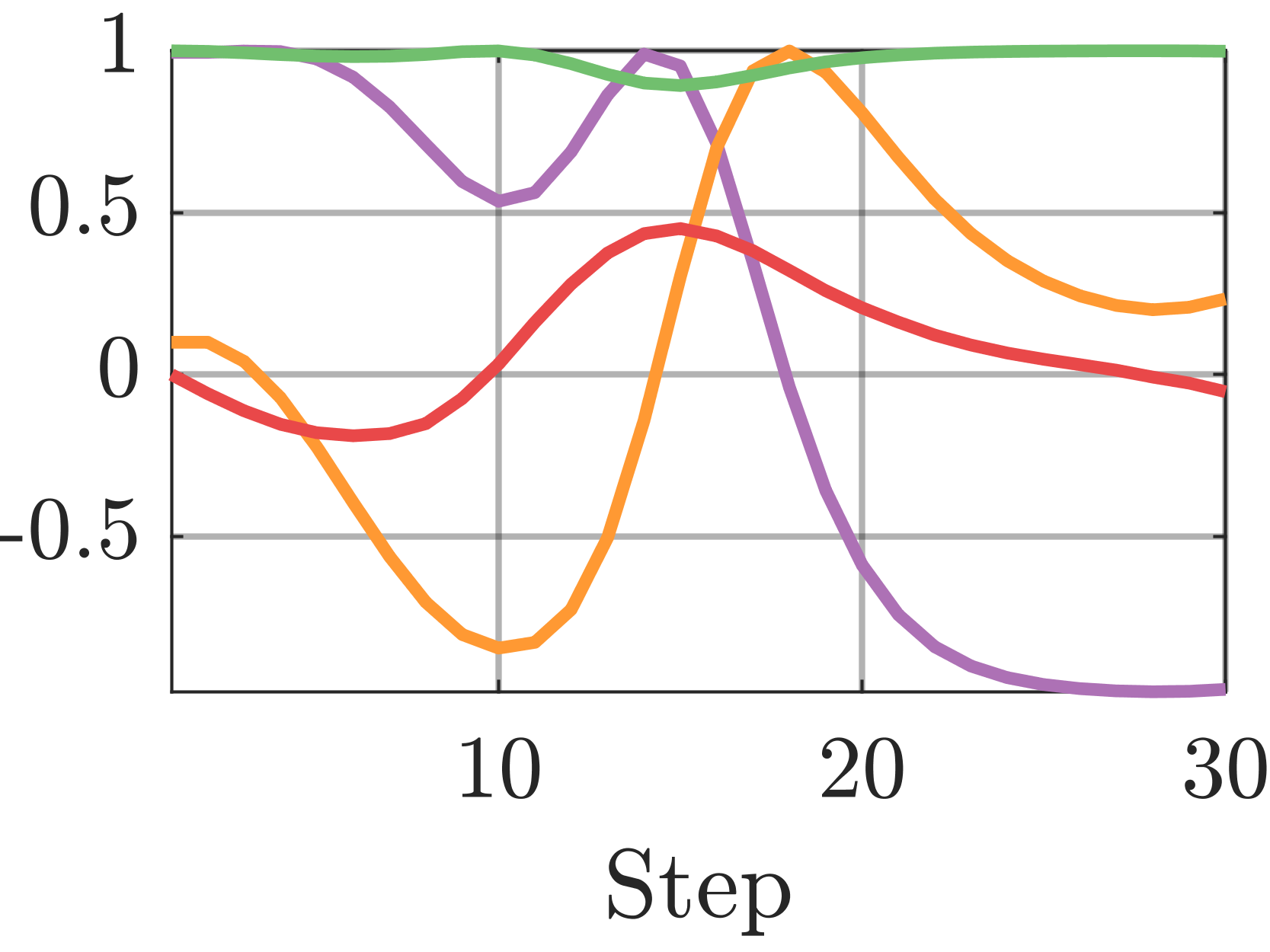}
                $\epsilon = 0.1$: $\xi = 2.8 \times 10^{-3}$
            \end{minipage}

            \begin{minipage}{0.24\textwidth}
                \centering
                \includegraphics[width=\columnwidth]{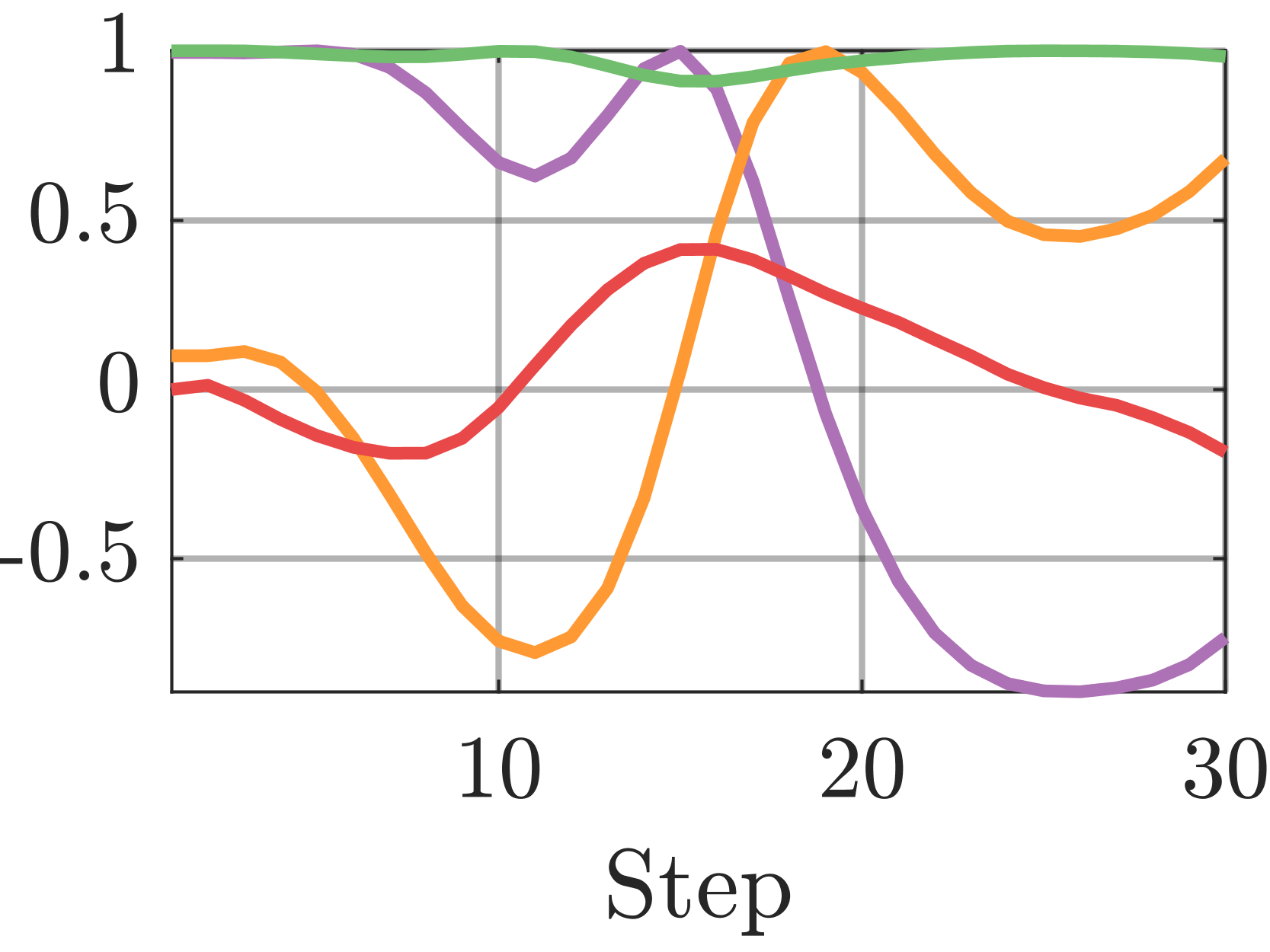}
                $\epsilon = 1.0$: $\xi = 5.1 \times 10^{-2}$
            \end{minipage}

            \begin{minipage}{0.24\textwidth}
                \centering
                \includegraphics[width=\columnwidth]{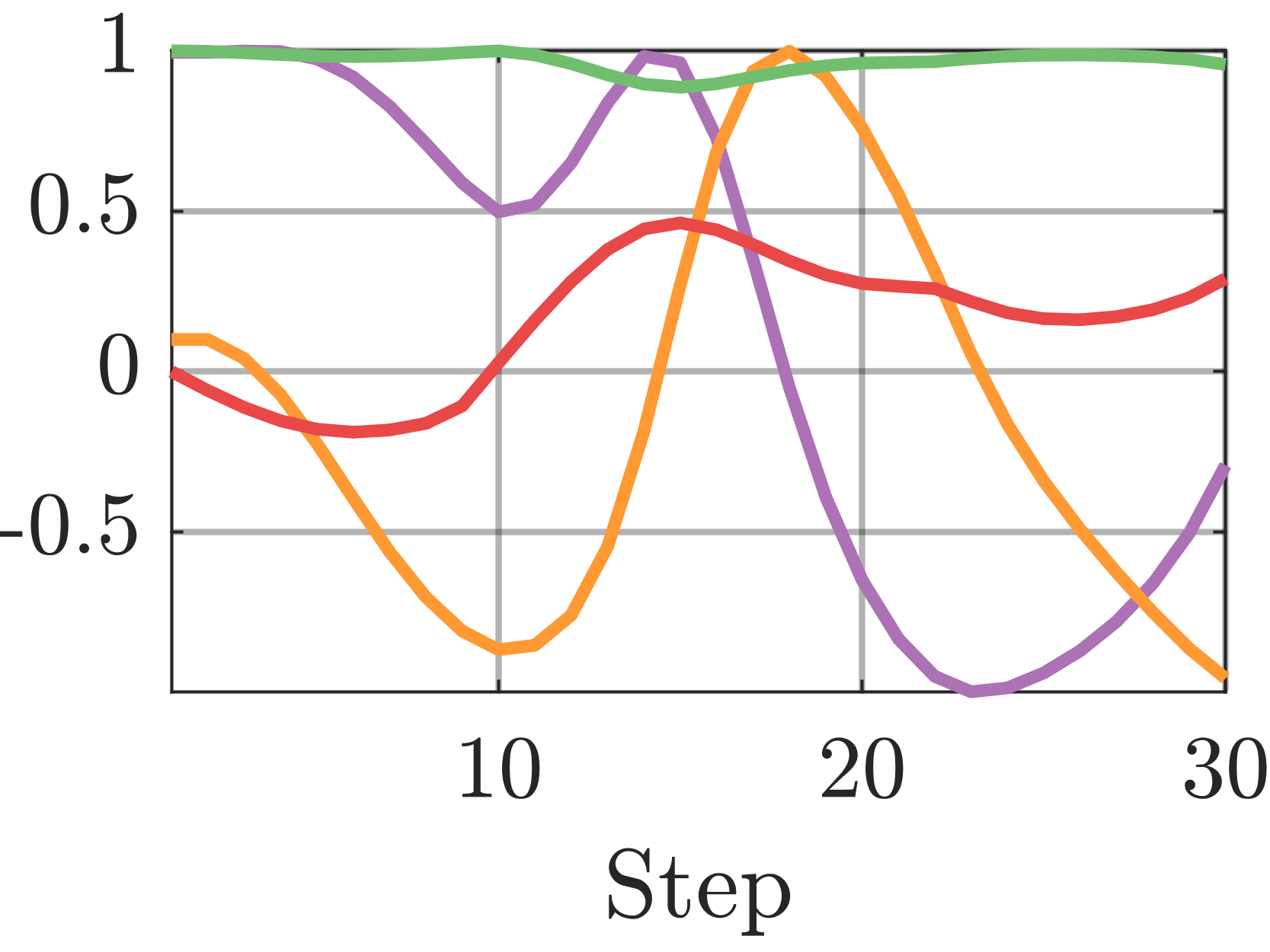}
                $\epsilon = 2.0$: $\xi = 8.6 \times 10^{-2}$
            \end{minipage}
        \end{tabular}
    \end{minipage}

    \caption{Adding noise to the global optimal solution in the pendulum case. \label{fig:exp:p:add-noise}}
    \vspace{3mm}
\end{figure}

\textbf{Spiral-line pattern for hard-to-solve initial states.} 
As shown in Figure~\ref{fig:exp:p}, around $10\%$ initial states are hard to solve for all four SDP solvers. Figure~\ref{fig:exp:p:heatmap} demonstrates the heatmaps of the suboptimality gaps across different solvers. 
We found that the states with relatively high suboptimality gaps roughly form a spiral line, as reported in~\cite{han2023arxiv-nonsmooth}, corresponding to the nonsmooth parts of the pendulum's optimal value function. There are multiple optimal trajectories for these initial states, which likely explains why the Moment-SOS Hierarchy fails to generate a rank-1 solution.


\begin{figure}[t]
    \centering
    \begin{minipage}{\textwidth}
        \centering
        \begin{tabular}{cc}
            \begin{minipage}{0.49\textwidth}
                \centering
                \includegraphics[width=\columnwidth]{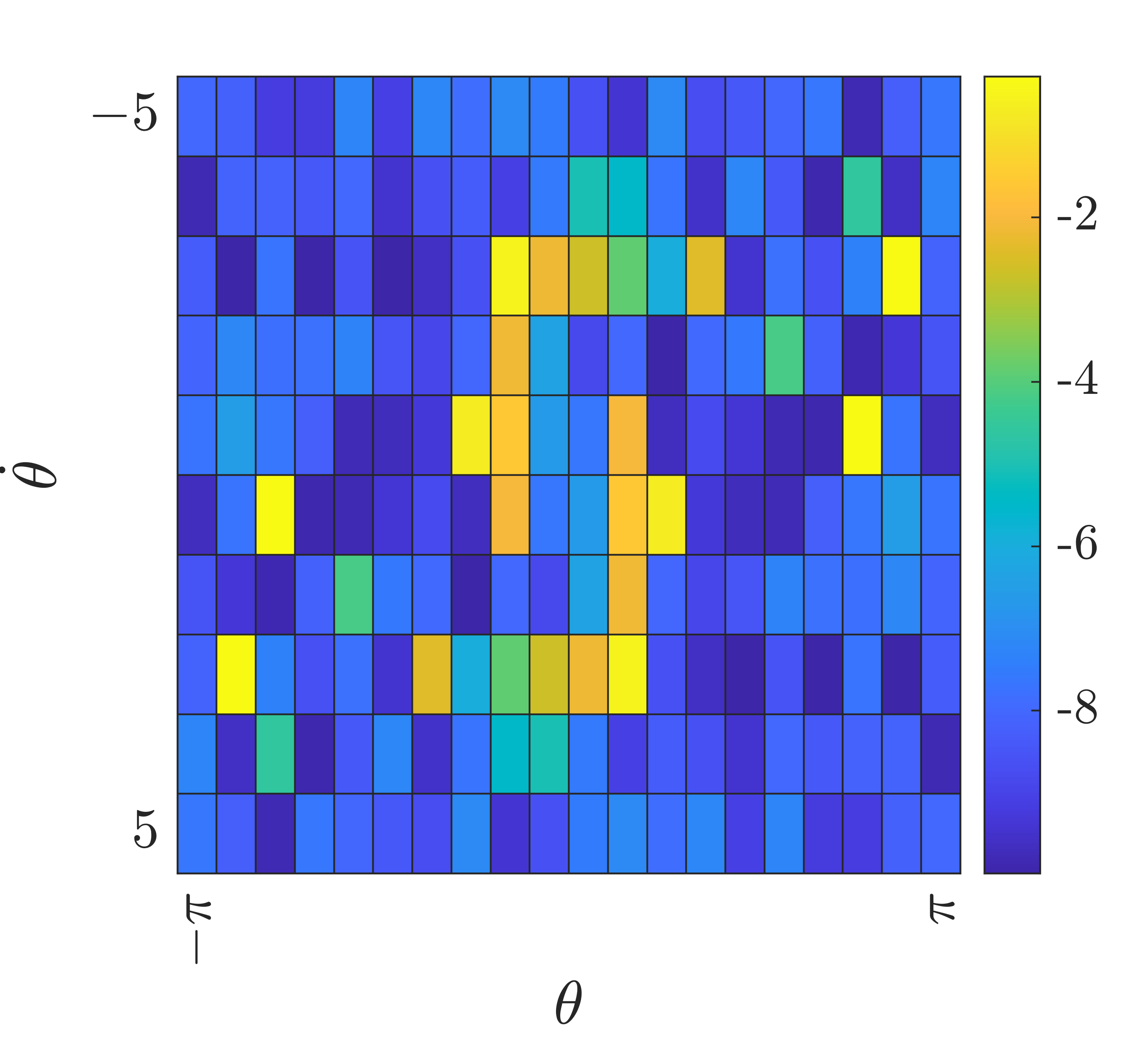}
                \mosek
            \end{minipage}

            \begin{minipage}{0.49\textwidth}
                \centering
                \includegraphics[width=\columnwidth]{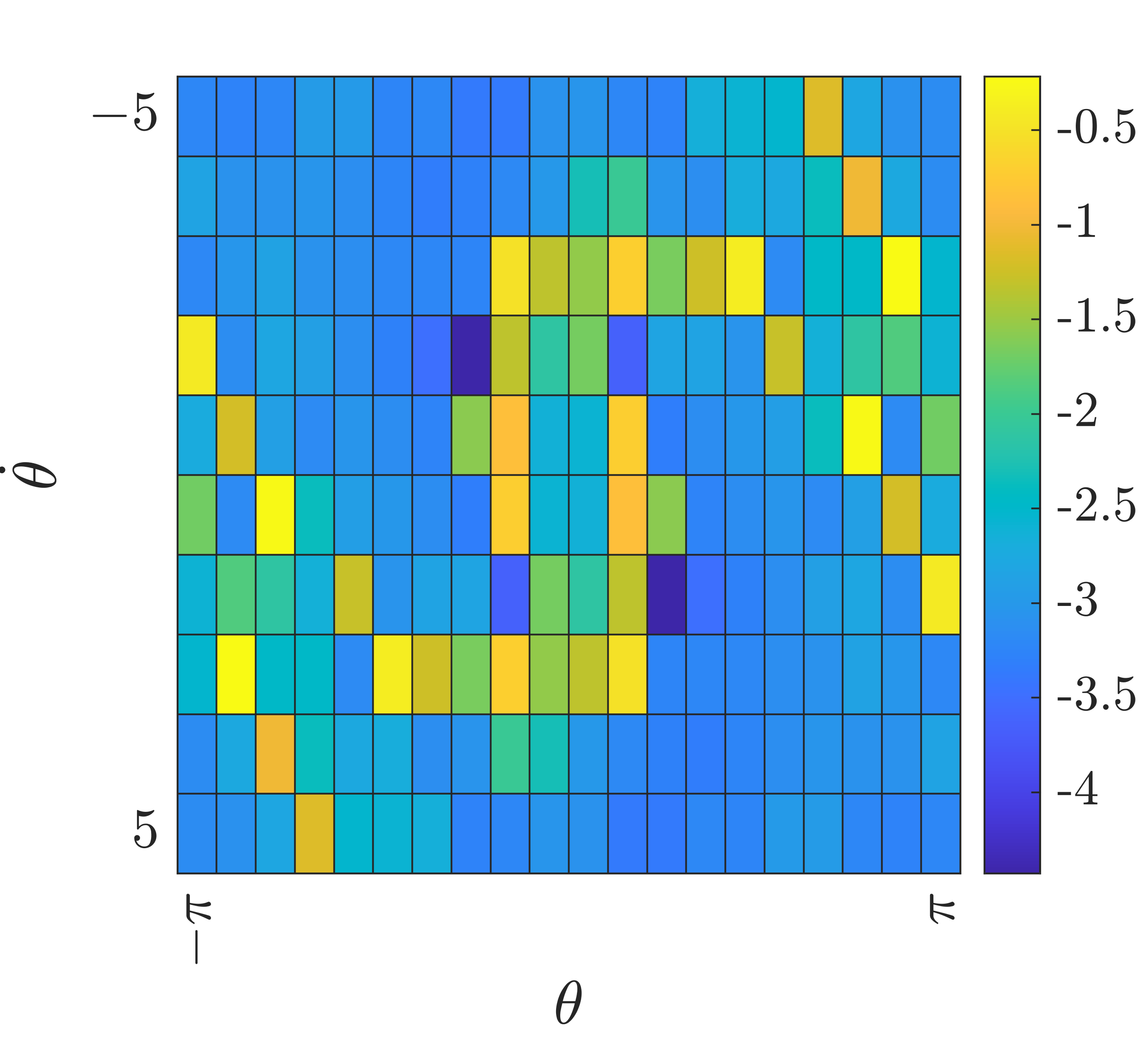}
                \cuadmm
            \end{minipage}
        \end{tabular}
    \end{minipage}

    \begin{minipage}{\textwidth}
        \centering
        \begin{tabular}{cc}
            \begin{minipage}{0.49\textwidth}
                \centering
                \includegraphics[width=\columnwidth]{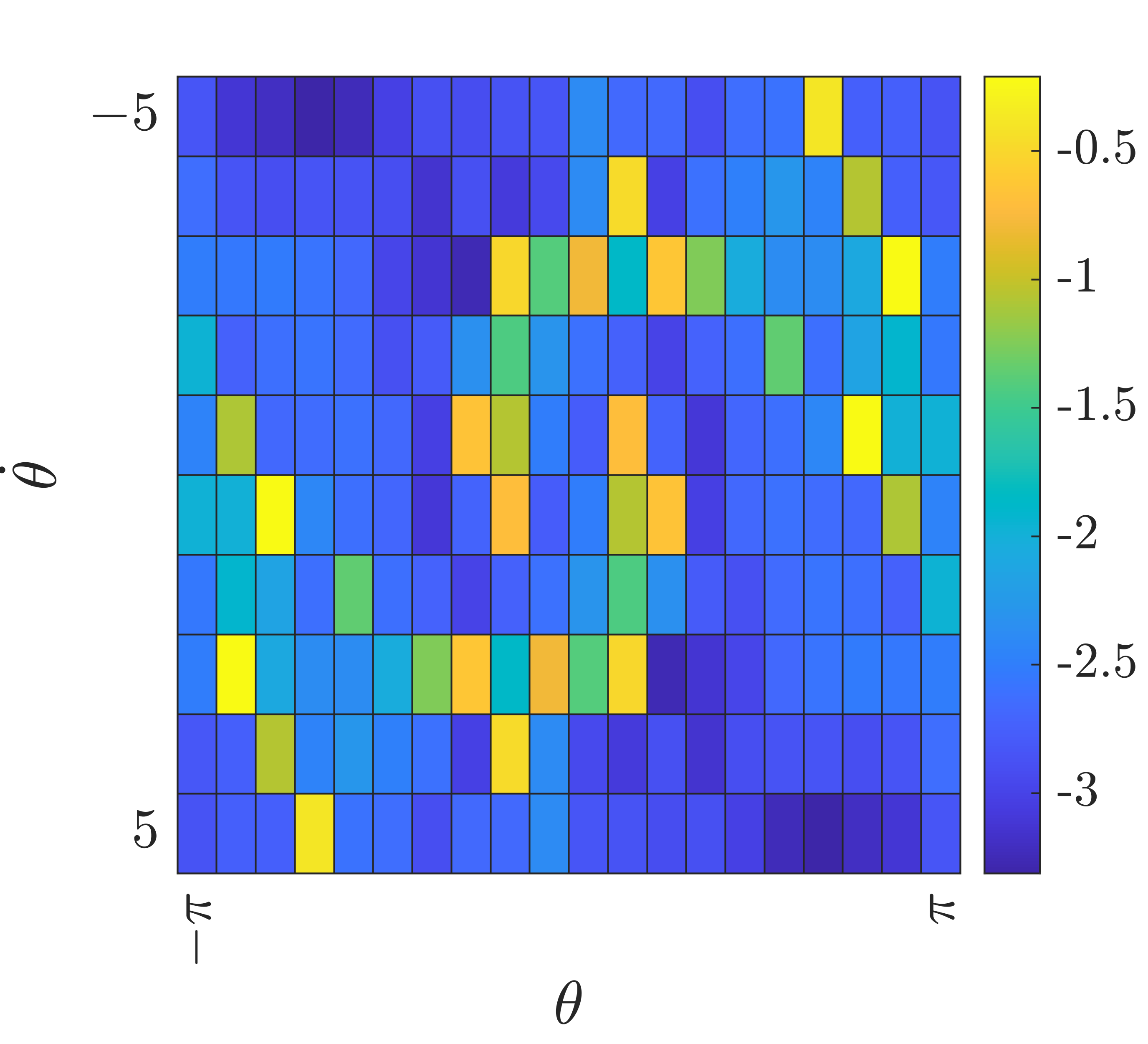}
                \sdpnal
            \end{minipage}

            \begin{minipage}{0.49\textwidth}
                \centering
                \includegraphics[width=\columnwidth]{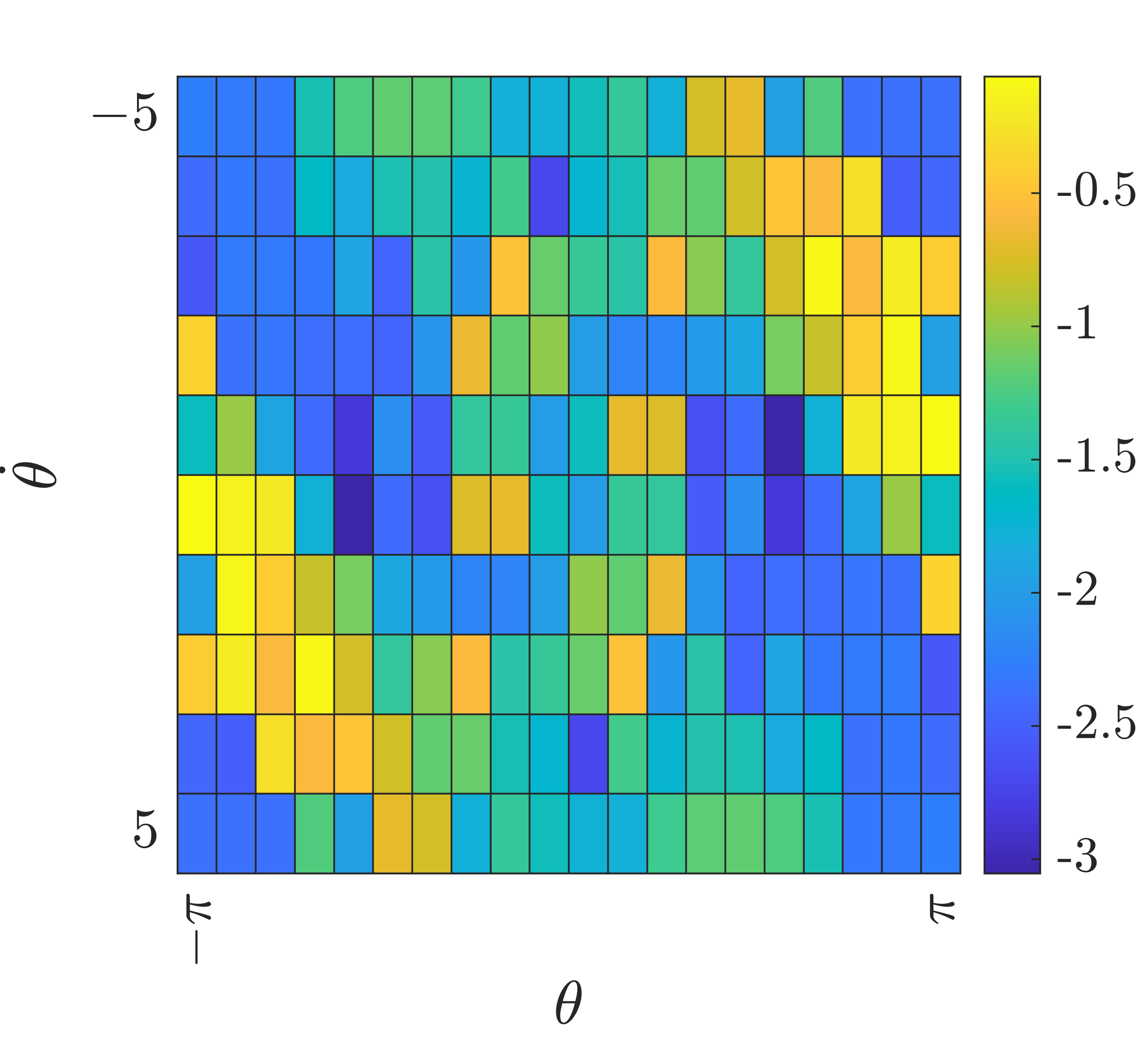}
                \cdcs
            \end{minipage}
        \end{tabular}
    \end{minipage}

    \caption{Heatmaps for suboptimality gaps from different SDP solvers.
    \label{fig:exp:p:heatmap}}
    \vspace{3mm}
\end{figure}

\subsection{Cart-Pole}
\label{app:subsec:exp:ca}
\textbf{Dynamics and constraints.} From~\cite{kelly2017siam-intro-directcollocation}, cart-pole's continuous time dynamics are:
\begin{subequations}
    \begin{align}
        \label{eq:exp:ca:con-dyn}
        \ddot{a} & = \frac{
            l m_2 \sin\theta \cdot \dot{\theta}^2 + u + m_2 g \cos\theta \sin\theta
        }{
            m_1 + m_2 \sin^2\theta
        } \\
        \ddot{\theta} & = -\frac{
            (l m_2 \sin\theta \cdot \dot{\theta}^2 + u) \cos\theta + (m_1 + m_2) g \sin\theta 
        }{l (m_1 + m_2 \sin^2\theta)}
    \end{align}
\end{subequations}
$a$ is the cart's position, and $\theta$ is the pole's angle with the plumb line. $m_1$ and $m_2$ are the mass of cart and pole, respectively. $l$ is the pole's length. With Lie group variational integrator~\cite{lee2008thesis-computationalgeometricmechanics}, the discretized dynamics and constraints are:
\begin{subequations}
    \begin{align}
        & \frac{m_1 + m_2}{\dt} \cdot (a_{k+1} - 2a_k + a_{k-1}) + \frac{m_2 l}{\dt} \cdot (\rs{k+1} - 2\rs{k} + \rs{k-1}) - \dt \cdot u_k = 0 \\
        & \frac{l}{\dt} \cdot (\fs{k} - \fs{k-1}) + \frac{1}{\dt} \cdot (a_{k+1} - 2a_k + a_{k-1}) \rc{k} + g\dt \cdot \rs{k} = 0 \\
        & ~\eqref{eq:exp:p:dis-dyn-constraints-rcupdate},~\eqref{eq:exp:p:dis-dyn-constraints-rsupdate},~\eqref{eq:exp:p:dis-dyn-constraints-so2-r},~\eqref{eq:exp:p:dis-dyn-constraints-so2-f},~\eqref{eq:exp:p:dis-dyn-constraints-fcmin} \\
        & u_{\max}^2 - u_k^2 \ge 0, \ a_{\max}^2 - a_k^2 \ge 0 
    \end{align} 
\end{subequations}
From $a = a_0, \dot{a} = \dot{a}_0, \theta= \theta_0, \dot{\theta} = \dot{\theta}_0$, we want the system to achieve $a = a_f, \dot{a} = 0, \theta = \pi, \dot{\theta} = 0$ in $N$ time steps. Figure~\ref{fig:exp:gen:sys-illustration} (b) illustrates the cart-pole system.

\textbf{Hyepr-parameters.} Denote $x_k$ as $[a_k \; \rc{k} \; \rs{k} \; \fc{k} \; \fs{k}] \in \Real{5}$. In all experiments, $m_1 = 1.0, m_2 = 0.3, l = 0.5$ and $N = 30, \dt = 0.1, f_{c, \min} = 0.5, a_{\max} = 1, u_{\max} = 10$. $P_f$ in~\eqref{eq:exp:gen:lqr-loss} is set to $1$. The number of localizing matrices is $273$. For first-order methods, \texttt{maxiter} is set to $10000$.

\textbf{Experiment results.} Figure~\ref{fig:exp:ca:demos-plots} shows three globally optimal trajectories for cart-pole. Figure~\ref{fig:exp:ca:mpc} shows the results of simulated MPC. Starting from $a_0 = 0, \dot{a}_0 = 0, \theta_0 = 0.1, \dot{\theta}_0 = 0$, we want to achieve $a = 0, \dot{a} = 0, \theta = \pi, \dot{\theta} = 0$. We take the last step's optimal SDP solution as the next step's initial guess. We set the control frequency to $10$Hz. Out of $47$ steps, $4$ extracted solutions from SDP fail to round a feasible solution. However, we can still heuristically apply the first control input from the extracted solution (possibly with some clipping) to the environment. Our certifiable controller is capable of stabilizing the cart-pole.

\begin{figure}[t]
    \begin{minipage}{\textwidth}
        \centering
        \begin{tabular}{ccc}
            \begin{minipage}{0.33\textwidth}
                \centering
                \includegraphics[width=\columnwidth]{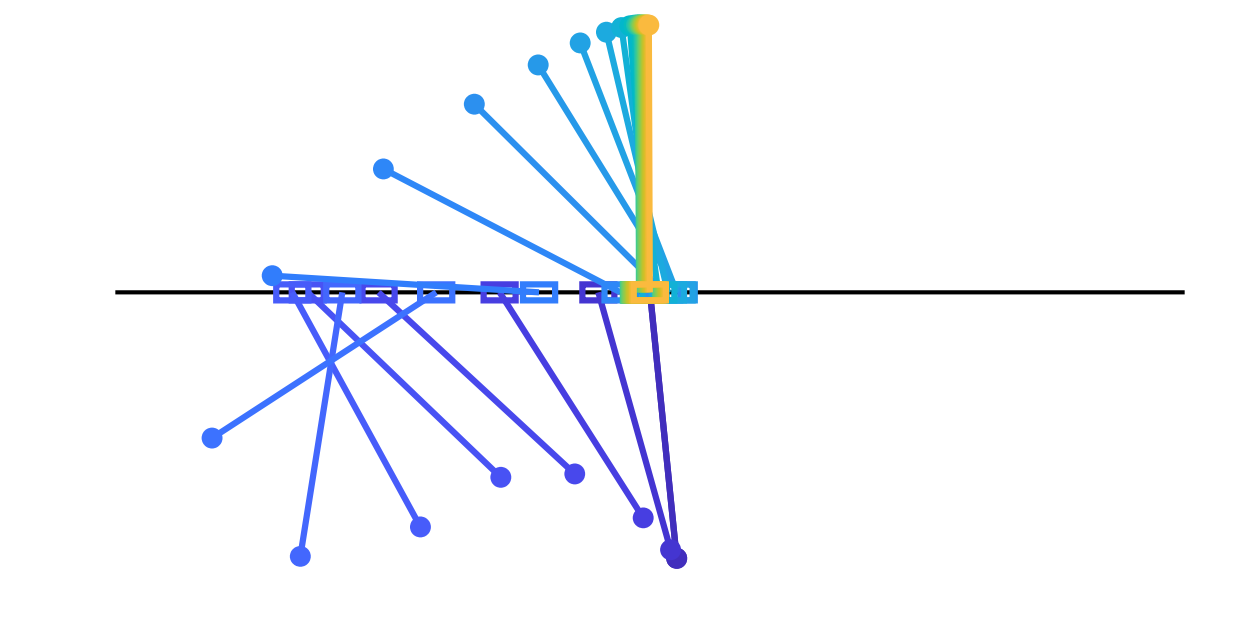}
            \end{minipage}

            \begin{minipage}{0.33\textwidth}
                \centering
                \includegraphics[width=\columnwidth]{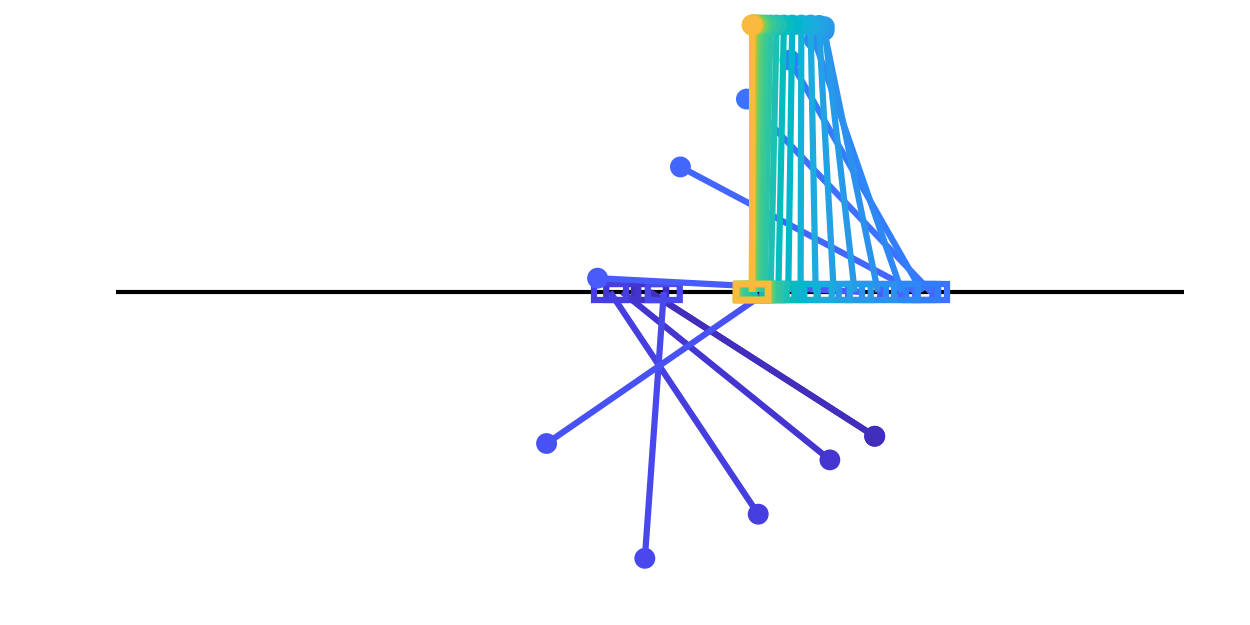}
            \end{minipage}

            \begin{minipage}{0.33\textwidth}
                \centering
                \includegraphics[width=\columnwidth]{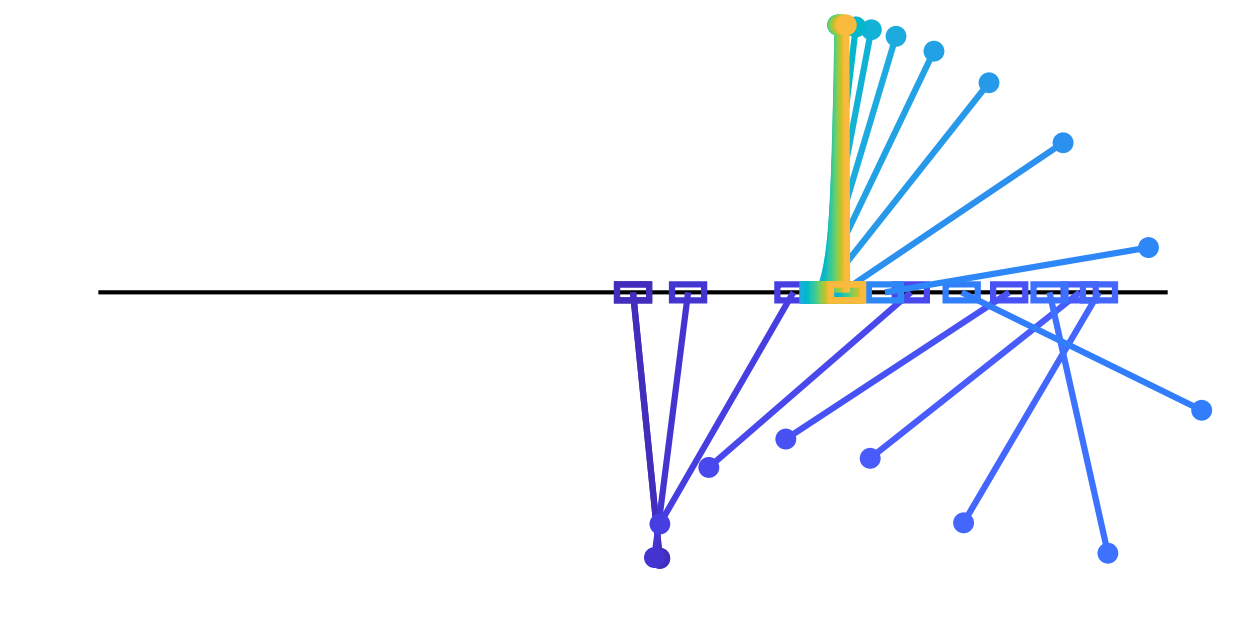}
            \end{minipage}
        \end{tabular}
    \end{minipage}

    \begin{minipage}{\textwidth}
        \centering
        \begin{tabular}{ccc}
            \begin{minipage}{0.33\textwidth}
                \centering
                \includegraphics[width=\columnwidth]{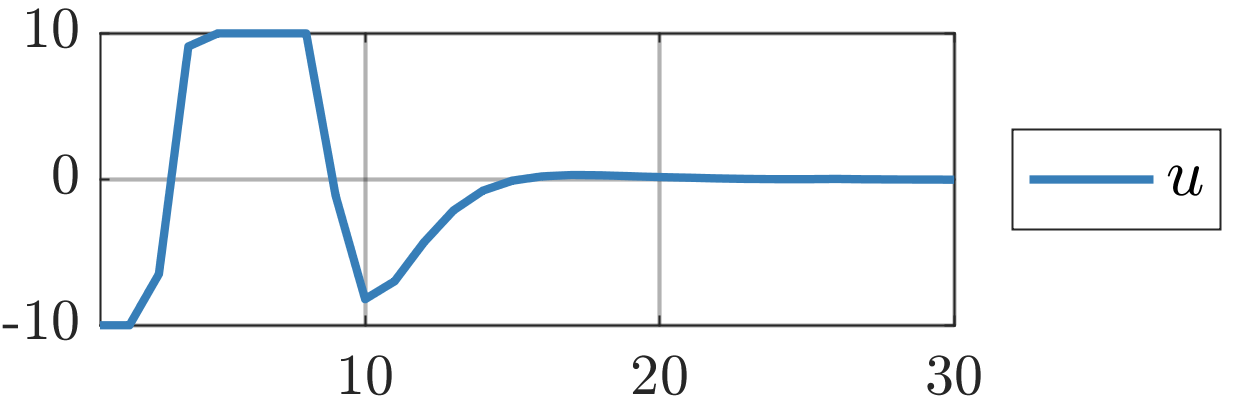}
            \end{minipage}

            \begin{minipage}{0.33\textwidth}
                \centering
                \includegraphics[width=\columnwidth]{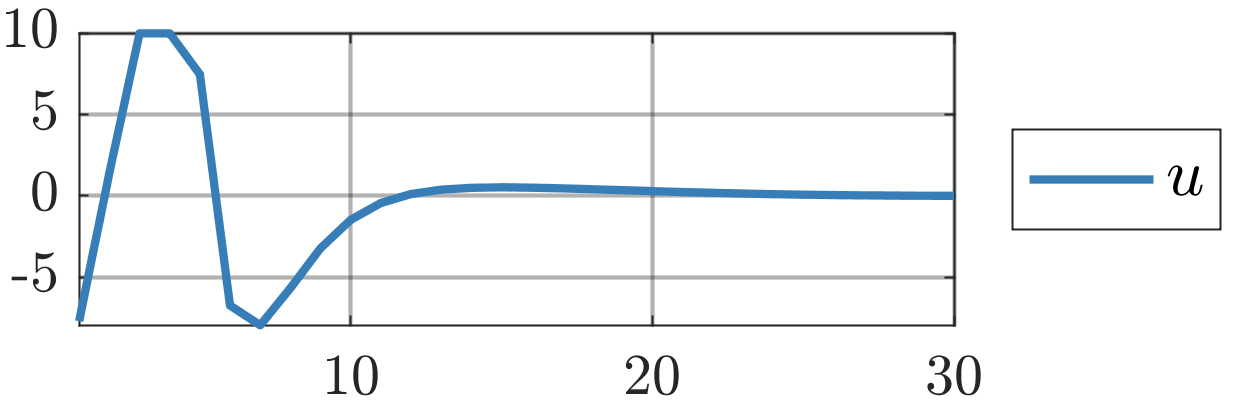}
            \end{minipage}

            \begin{minipage}{0.33\textwidth}
                \centering
                \includegraphics[width=\columnwidth]{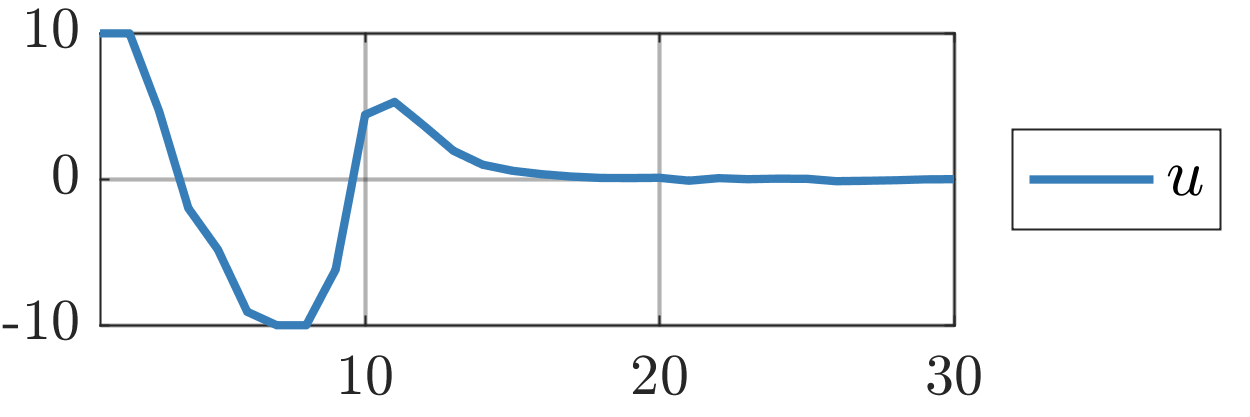}
            \end{minipage}
        \end{tabular}
    \end{minipage}

    \begin{minipage}{\textwidth}
        \centering
        \begin{tabular}{ccc}
            \begin{minipage}{0.33\textwidth}
                \centering
                \includegraphics[width=\columnwidth]{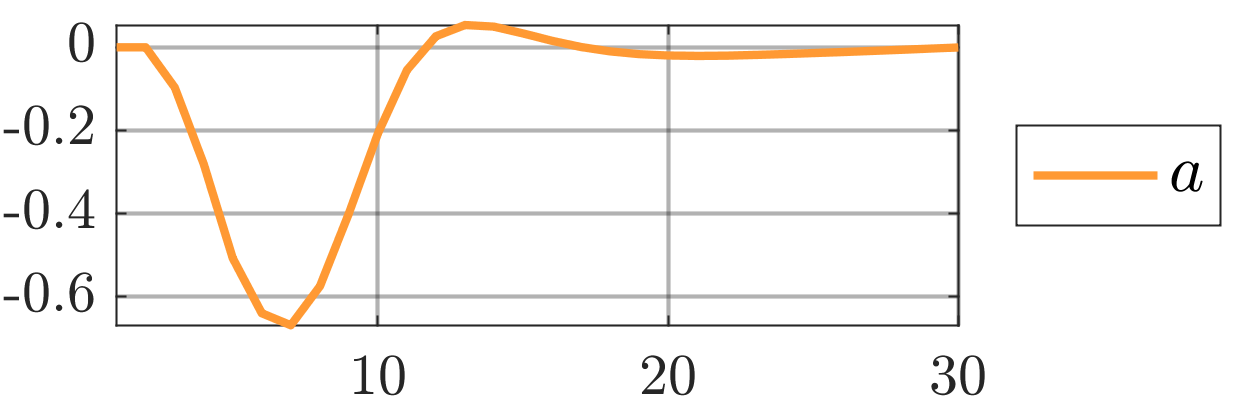}
            \end{minipage}

            \begin{minipage}{0.33\textwidth}
                \centering
                \includegraphics[width=\columnwidth]{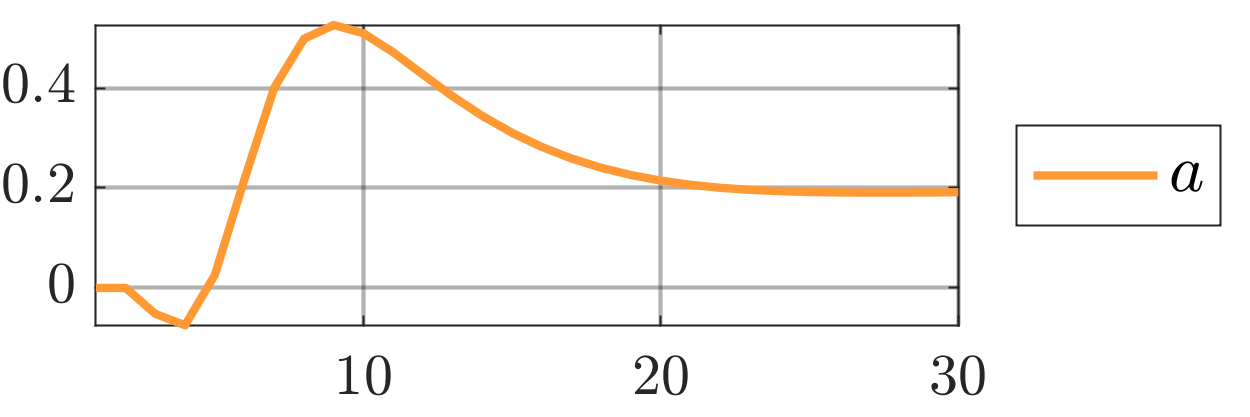}
            \end{minipage}

            \begin{minipage}{0.33\textwidth}
                \centering
                \includegraphics[width=\columnwidth]{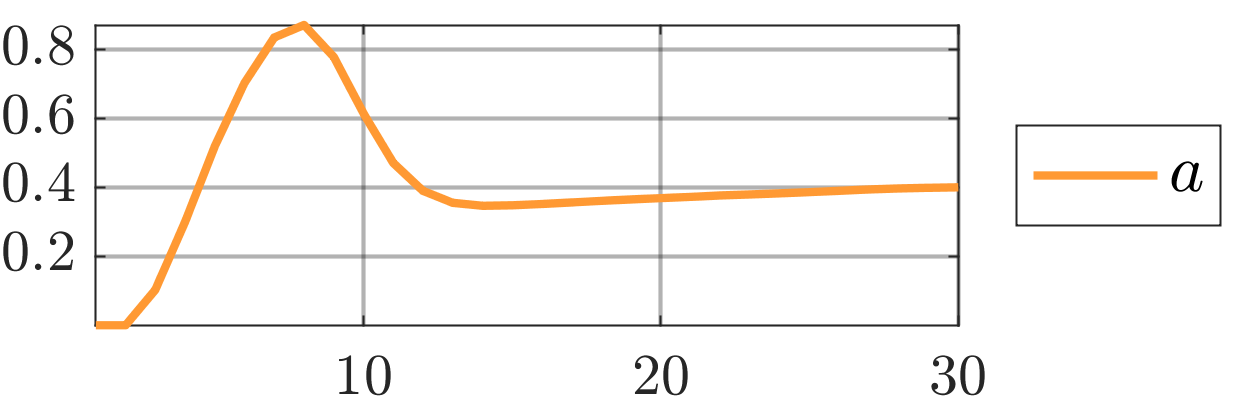}
            \end{minipage}
        \end{tabular}
    \end{minipage}

    \begin{minipage}{\textwidth}
        \centering
        \begin{tabular}{ccc}
            \begin{minipage}{0.33\textwidth}
                \centering
                \includegraphics[width=\columnwidth]{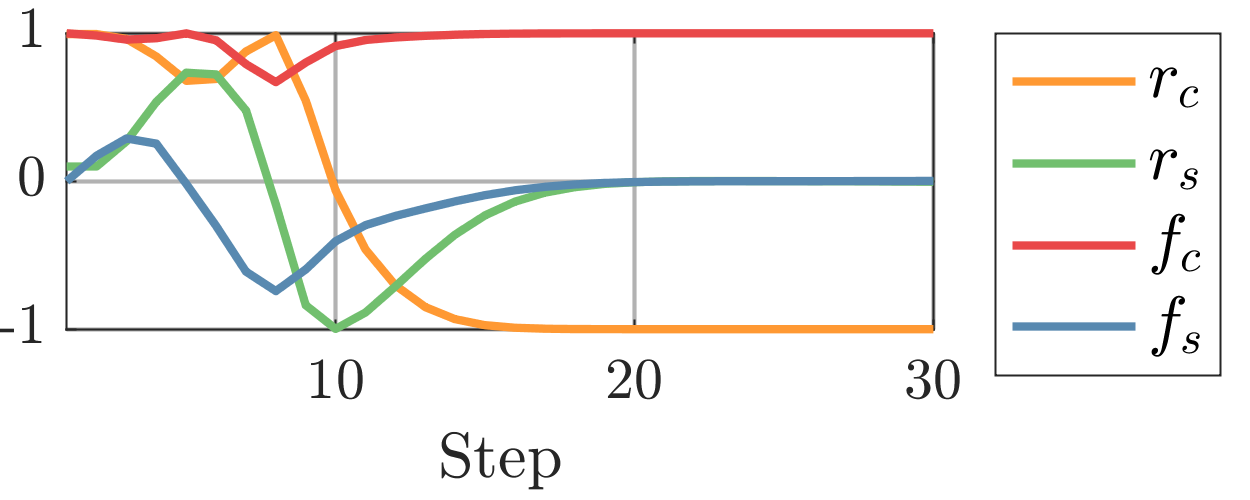}
                (a)
            \end{minipage}

            \begin{minipage}{0.33\textwidth}
                \centering
                \includegraphics[width=\columnwidth]{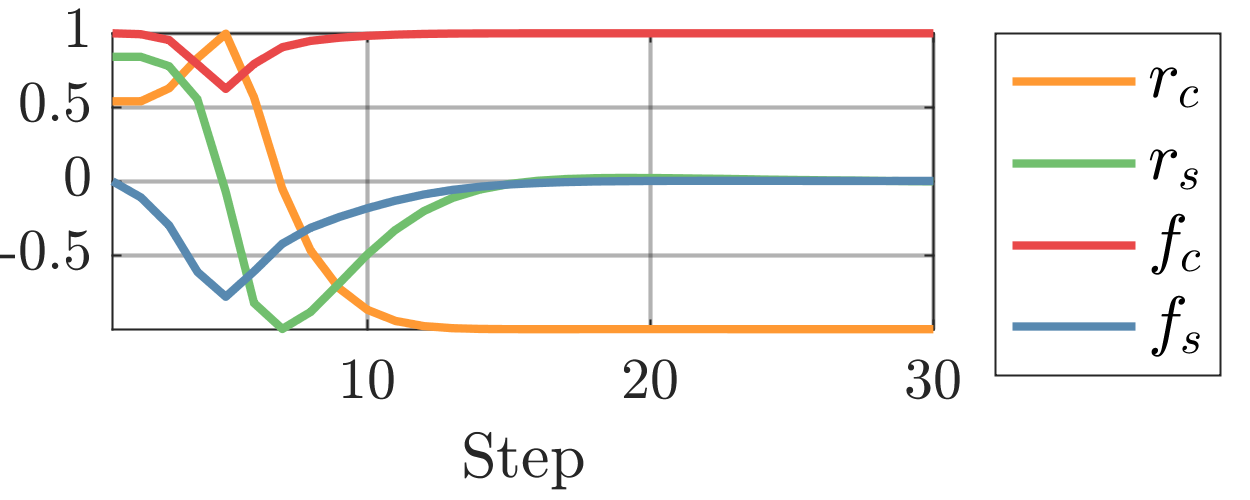}
                (b)
            \end{minipage}

            \begin{minipage}{0.33\textwidth}
                \centering
                \includegraphics[width=\columnwidth]{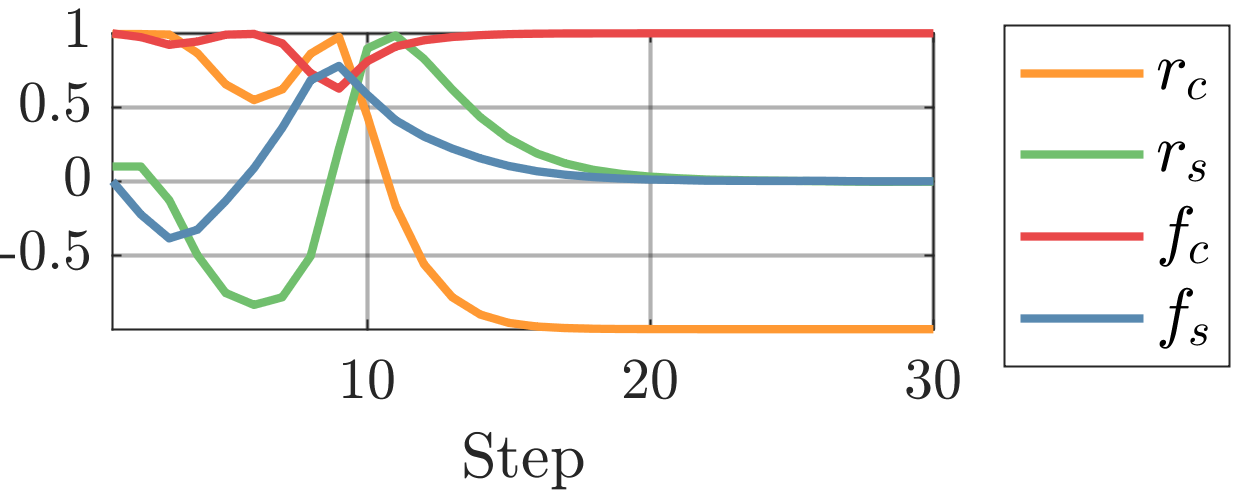}
                (c)
            \end{minipage}
        \end{tabular}
    \end{minipage}

    \caption{Global optimal trajectories for the cart-pole problem. \label{fig:exp:ca:demos-plots}}
    \vspace{3mm}
    
\end{figure}

\begin{figure}[h]
    \centering
    \begin{minipage}{\textwidth}
        \centering
        \begin{tabular}{ccc}
            \begin{minipage}{0.33\textwidth}
                \centering
                \includegraphics[width=\columnwidth]{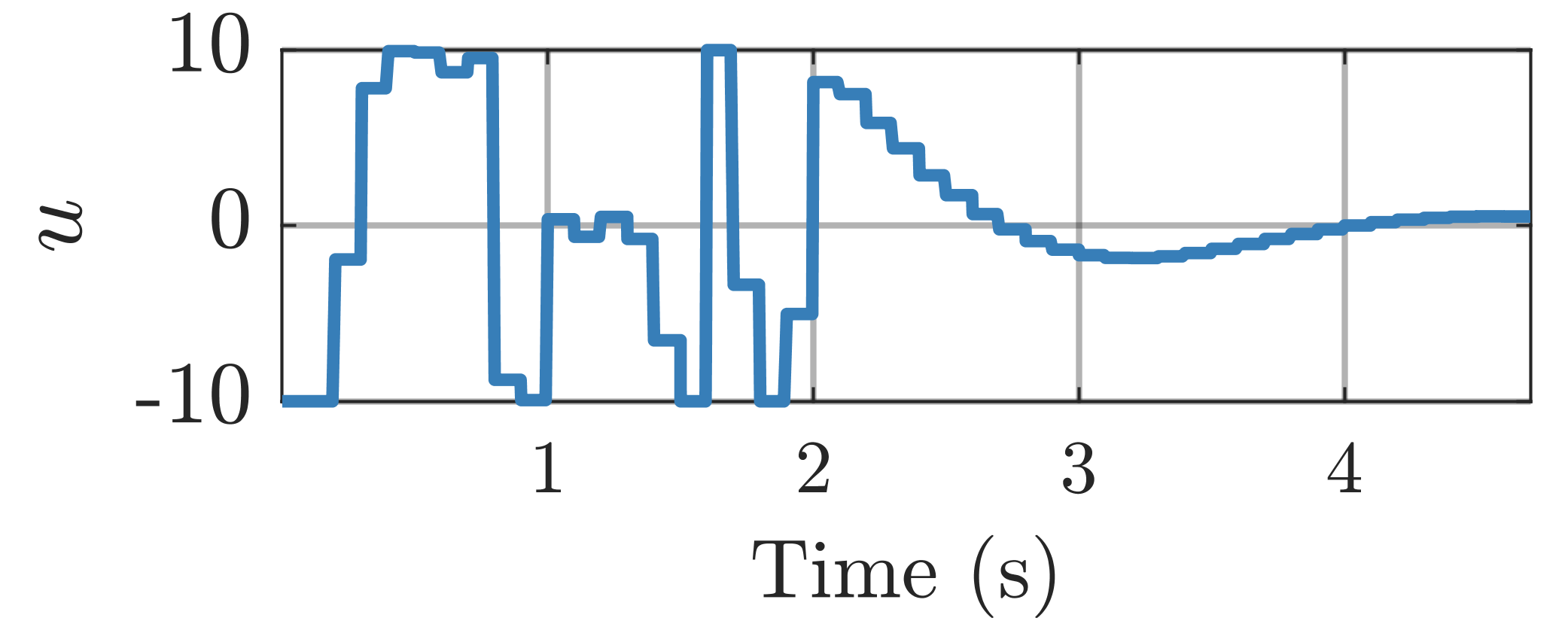}
            \end{minipage}

            \begin{minipage}{0.33\textwidth}
                \centering
                \includegraphics[width=\columnwidth]{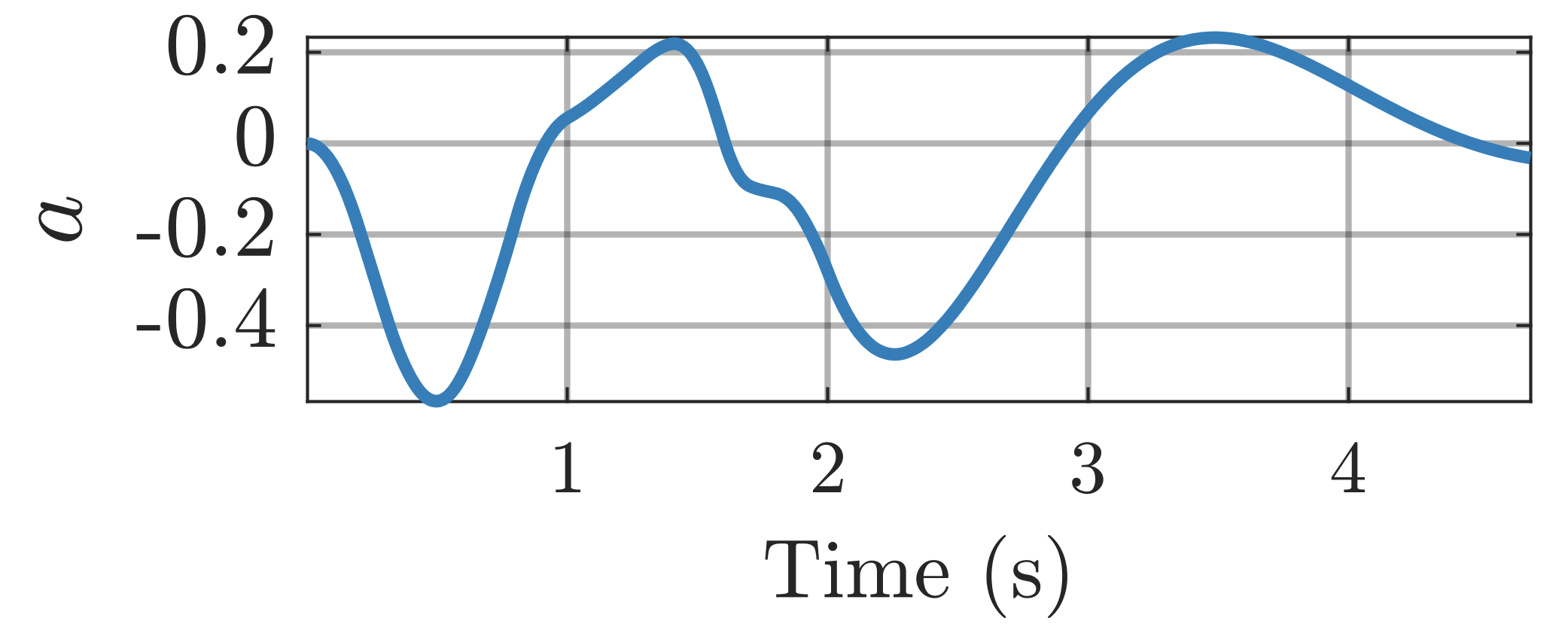}
            \end{minipage}

            \begin{minipage}{0.33\textwidth}
                \centering
                \includegraphics[width=\columnwidth]{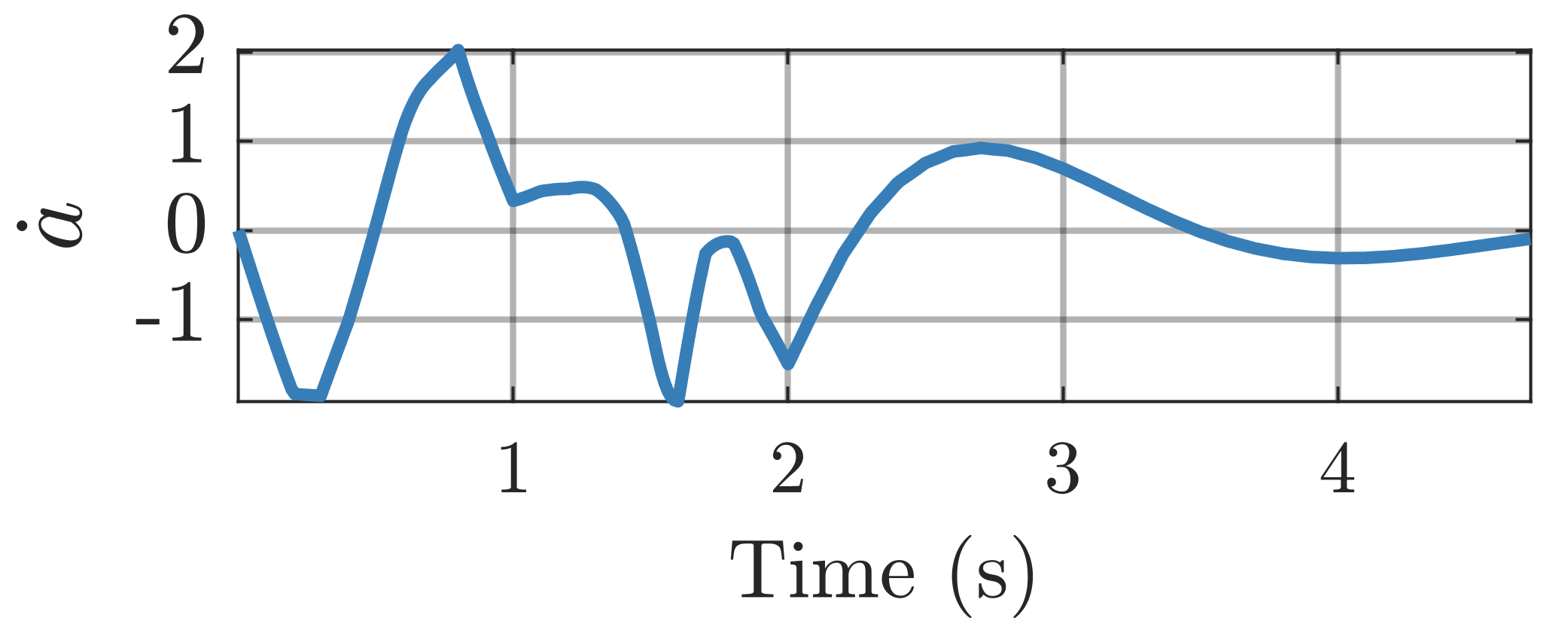}
            \end{minipage}
        \end{tabular}
    \end{minipage}

    \begin{minipage}{\textwidth}
        \centering
        \begin{tabular}{ccc}
            \begin{minipage}{0.32\textwidth}
                \centering
                \includegraphics[width=\columnwidth]{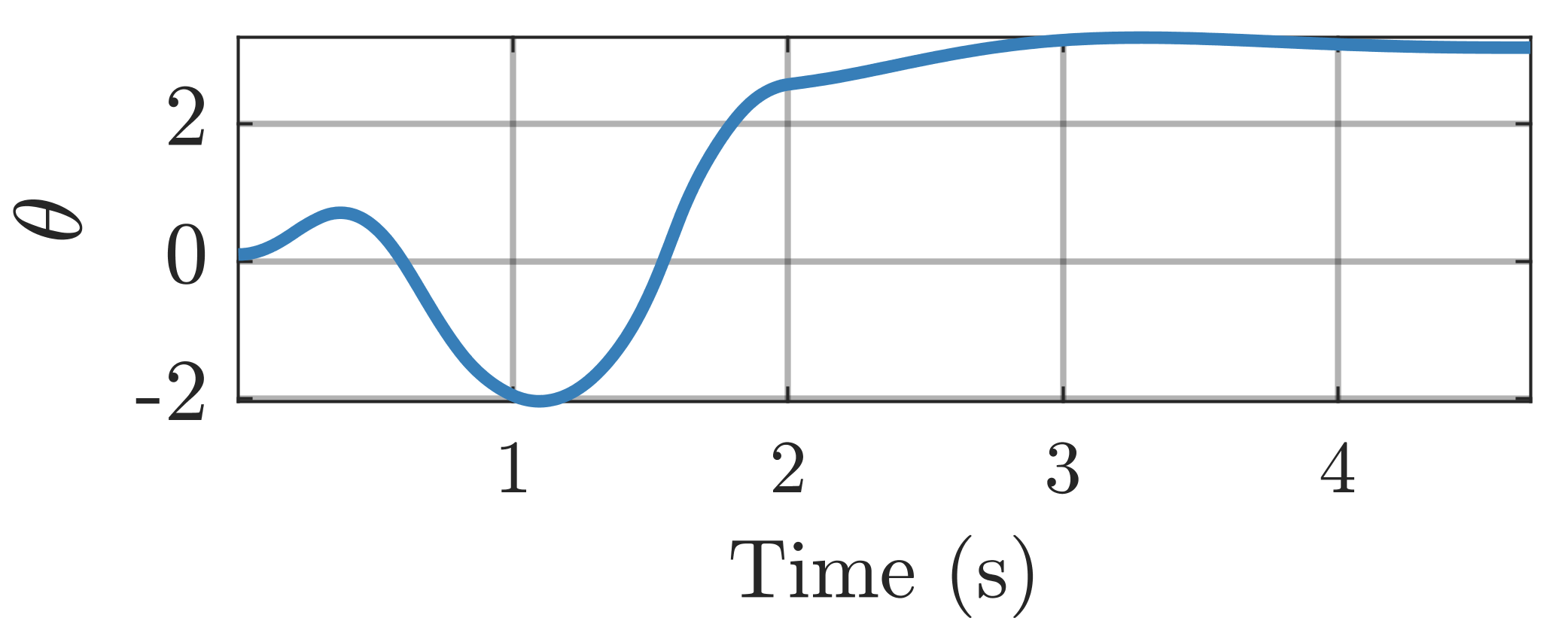}
            \end{minipage}

            \begin{minipage}{0.32\textwidth}
                \centering
                \includegraphics[width=\columnwidth]{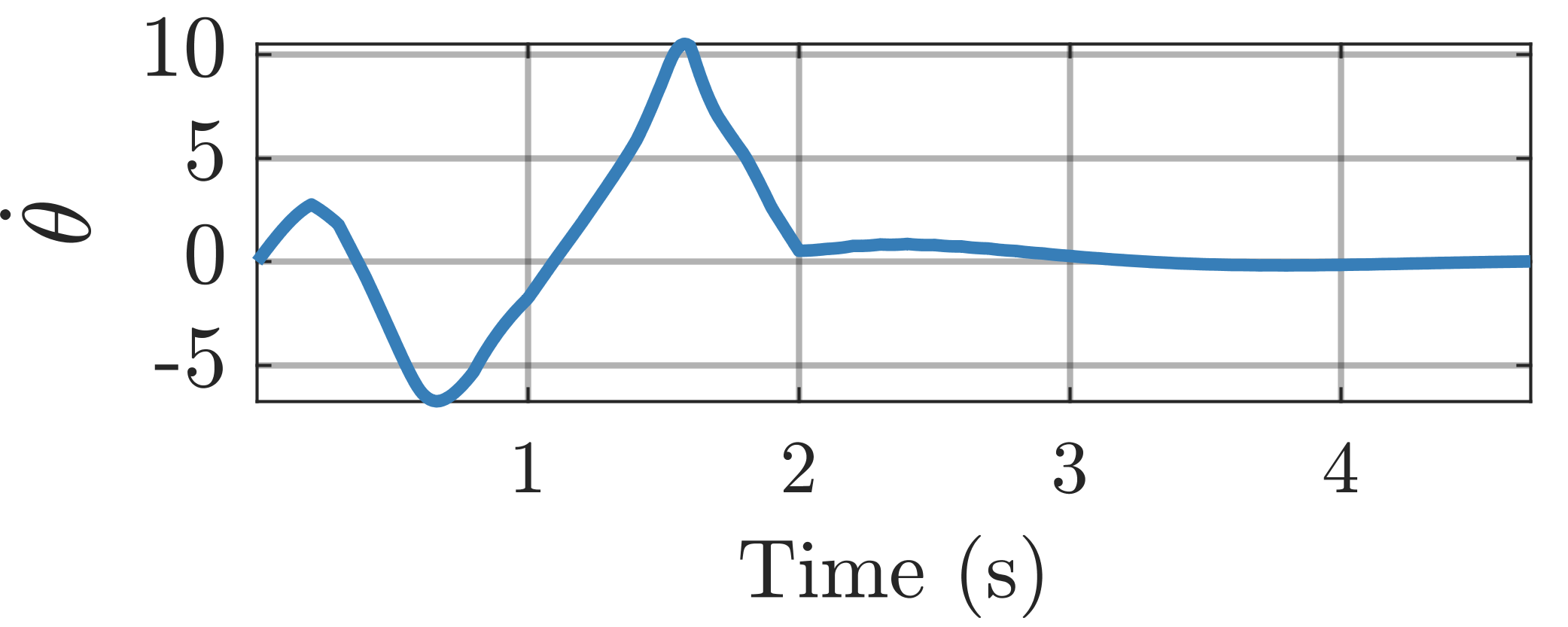}
            \end{minipage}

            \begin{minipage}{0.35\textwidth}
                \centering
                \includegraphics[width=\columnwidth]{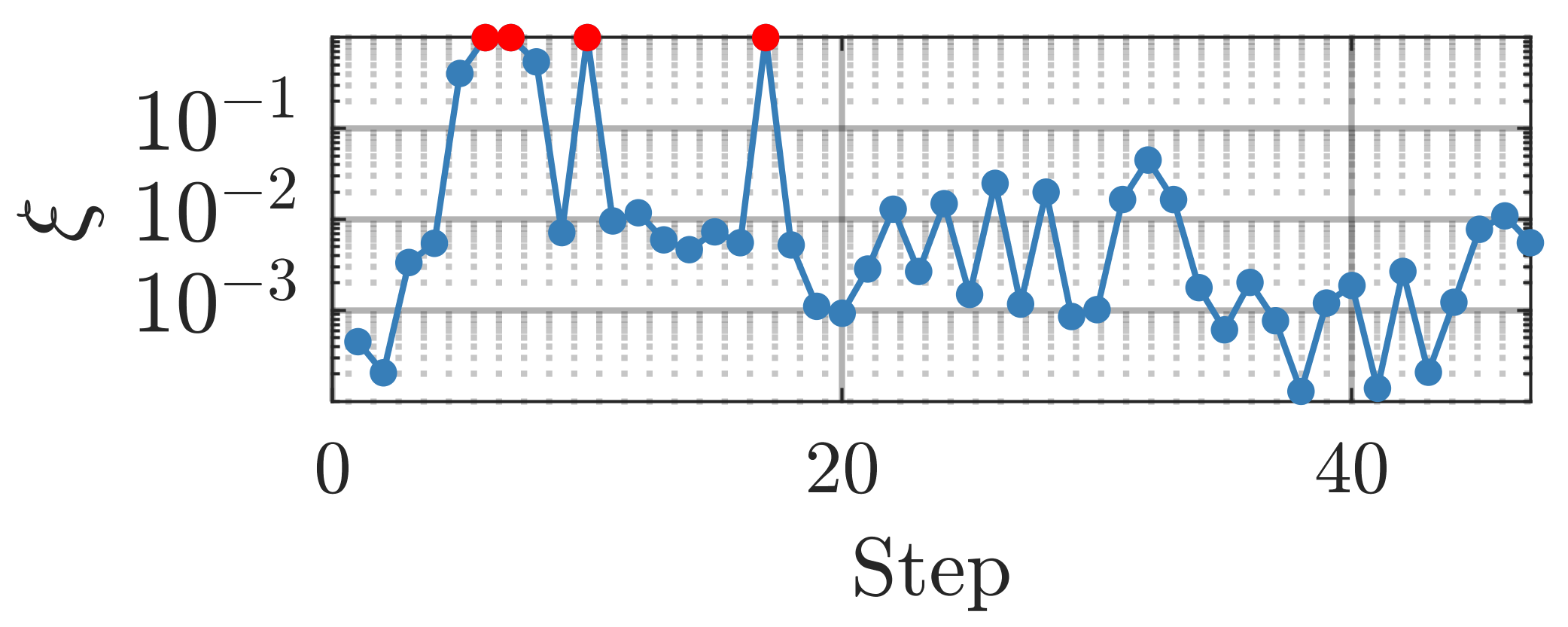}
            \end{minipage}
        \end{tabular}
    \end{minipage}

    \begin{minipage}{\textwidth}
        \centering
        \begin{tabular}{cc}
            \begin{minipage}{0.35\textwidth}
                \centering
                \includegraphics[width=\columnwidth]{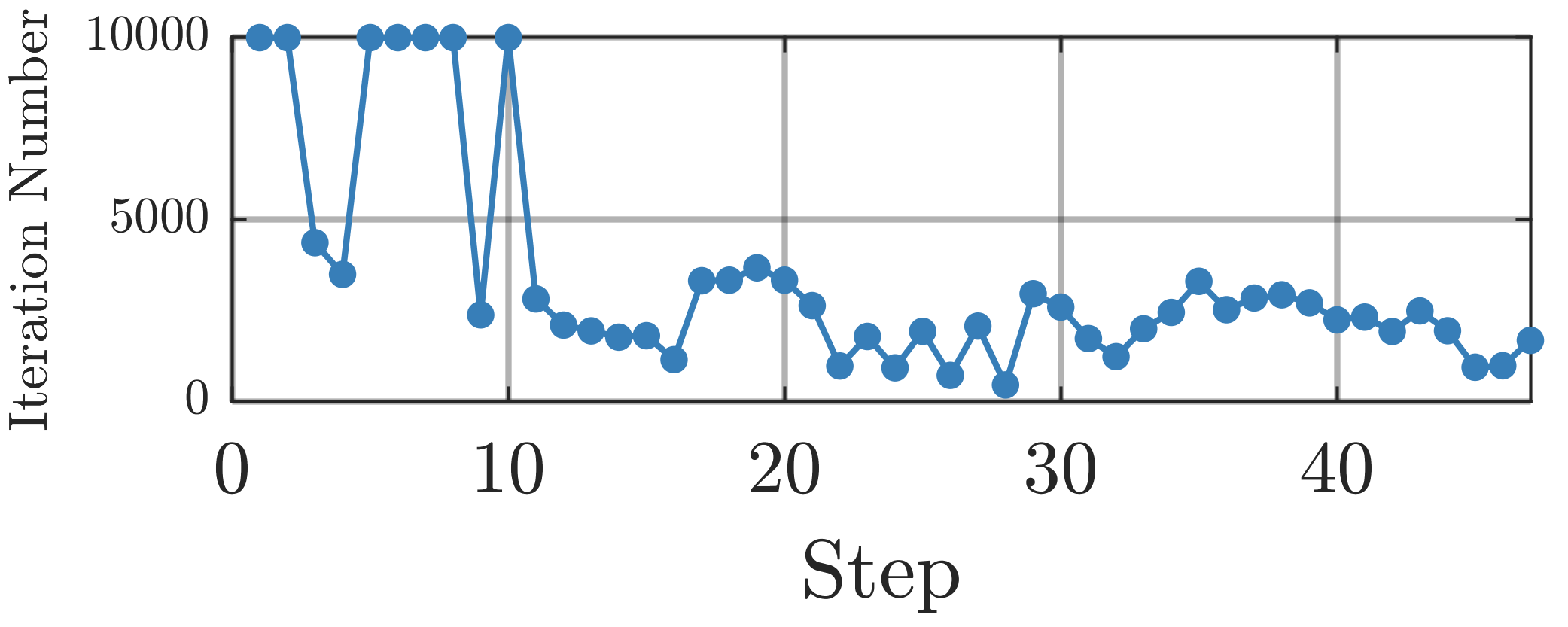}
            \end{minipage}

            \begin{minipage}{0.35\textwidth}
                \centering
                \includegraphics[width=\columnwidth]{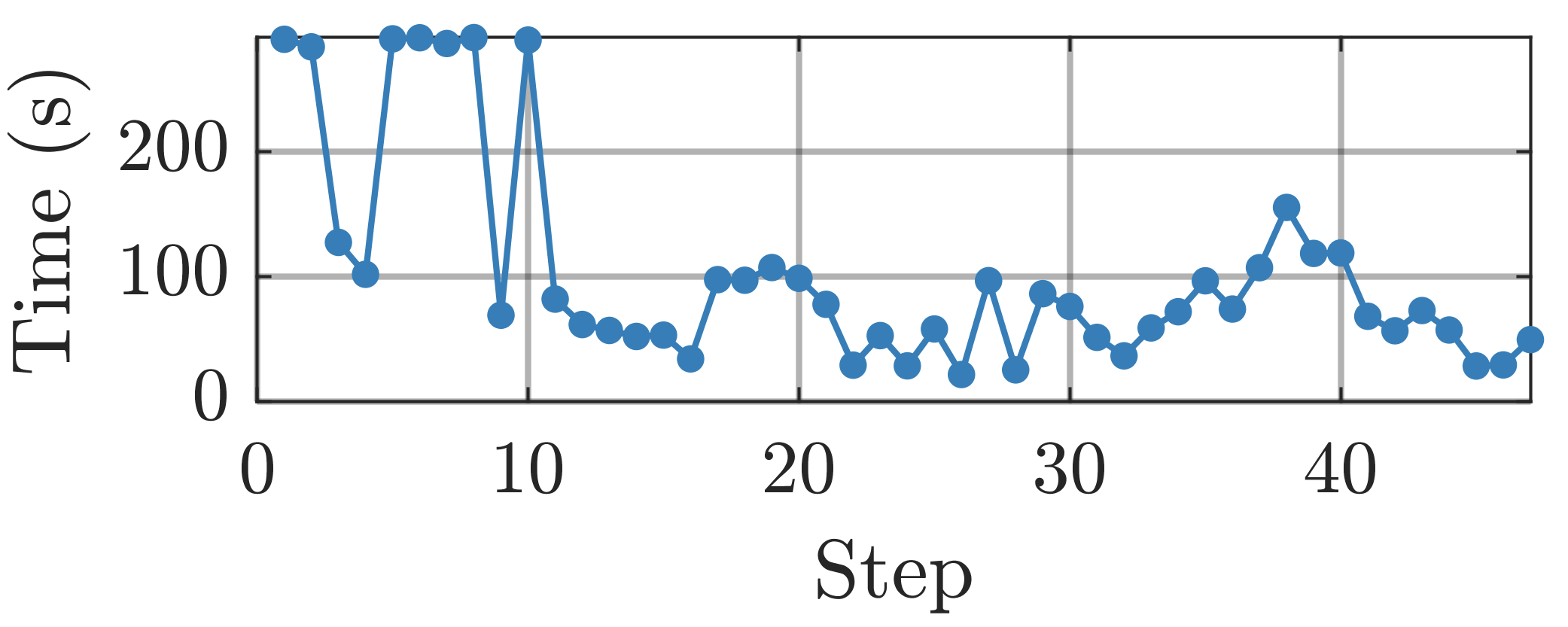}
            \end{minipage}
        \end{tabular}
    \end{minipage}

    \caption{Simulated MPC in the cart-pole case. In the suboptimality gap plot, red points indicate that \fmincon converges to an infeasible state.
    \label{fig:exp:ca:mpc}}
    \vspace{3mm}
\end{figure}

\subsection{Car Back-in}
\label{app:subsec:exp:cr}

\textbf{Dynamics and constraints.} We consider a real-world scenario where we want to put a car (modeled as a rectangle) into the middle of two rectangular obstacles, as shown in Figure~\ref{fig:exp:gen:sys-illustration} (e). According to~\cite{lynch2017book-modern-robotics}, we choose the simplified car model:
\begin{subequations}
    \label{eq:exp:cr:car-model}
    \begin{align}
        & \dot{x} = v \cos\theta \\
        & \dot{y} = v \sin\theta \\
        & \dot{\theta} = w
    \end{align}
\end{subequations}
Denote the length and width of the car as $L$ and $W$, respectively. Four vertices of the car$\left\{ (x_{c,i}, y_{c,i}) \right\}_{i=1}^{4}$ can be represented as:
\begin{subequations}
    \label{eq:exp:cr:car-vertices}
    \begin{align}
        x_{c,1} = x + \frac{L}{2} \cos\theta - \frac{W}{2} \sin\theta, \ y_{c,1} = y + \frac{L}{2} \sin \theta + \frac{W}{2} \cos\theta \\
        x_{c,2} = x - \frac{L}{2} \cos\theta - \frac{W}{2} \sin\theta, \ y_{c,2} = y - \frac{L}{2} \sin \theta + \frac{W}{2} \cos\theta \\
        x_{c,3} = x - \frac{L}{2} \cos\theta + \frac{W}{2} \sin\theta, \ y_{c,3} = y - \frac{L}{2} \sin \theta - \frac{W}{2} \cos\theta \\
        x_{c,4} = x + \frac{L}{2} \cos\theta + \frac{W}{2} \sin\theta, \ y_{c,4} = y + \frac{L}{2} \sin \theta - \frac{W}{2} \cos\theta 
    \end{align}
\end{subequations}
Denote obstacle 1 and 2's vertices as $\left\{ (x_{o1,i}, y_{o1,i}) \right\}_{i=1}^{4}$ and $\left\{ (x_{o2,i}, y_{o2,i}) \right\}_{i=1}^{4}$. To write the non-collision condition as polynomial constraints, we introduce six lifting variables $(A_1, B_1, C_1)$ and $(A_2, B_2, C_2)$, representing two separation lines between the car and two obstacles. Due to the hyperplane separation theorem in convex optimization~\cite{boyd2004book-convex-optimization}, there is no collision between the car and two obstacles if and only if there exist $(A_1, B_1, C_1)$ and $(A_2, B_2, C_2)$, such that
\begin{subequations}
    \label{eq:exp:cr:separation}
    \begin{align}
        A_1 x_{c,i} + B_1 y_{c,i} + C_1 \ge 0, \  A_1 x_{o1,j} + B_1 y_{o1,j} + C_1 \le 0, \ \forall i \in \seqordering{4}, j \in \seqordering{4} \\
        A_2 x_{c,i} + B_2 y_{c,i} + C_2 \ge 0, \  A_2 x_{o2,j} + B_2 y_{o2,j} + C_2 \le 0, \ \forall i \in \seqordering{4}, j \in \seqordering{4}
    \end{align}
\end{subequations}
\eqref{eq:exp:cr:separation} contains $16$ polynomial constraints. Thus, the discretized dynamics and constraints are:
\begin{subequations}
    \label{eq:exp:cr:dis-dyn-constraints}
    \begin{align}
        & \rx{k+1} - \rx{k} = \dt \cdot v_k \rc{k} \\
        & \ry{k+1} - \ry{k} = \dt \cdot v_k \rs{k} \\
        & \fs{k} = \dt \cdot w_k - \frac{1}{6} (\dt \cdot w_k)^3 \label{eq:exp:cr:dis-dyn-constraints:thirdorder-approx} \\
        & ~\eqref{eq:exp:p:dis-dyn-constraints-rcupdate},~\eqref{eq:exp:p:dis-dyn-constraints-rsupdate},~\eqref{eq:exp:p:dis-dyn-constraints-so2-r},~\eqref{eq:exp:p:dis-dyn-constraints-so2-f} \\
        & A_{1,k} x_{c,i} + B_{1,k} y_{c,i} + C_{1,k} \ge 0, \  A_{1,k} x_{o1,j} + B_{1,k} y_{o1,j} + C_{1,k} \le 0, \\
        & A_{2,k} x_{c,i} + B_{2,k} y_{c,i} + C_{2,k} \ge 0, \  A_{2,k} x_{o2,j} + B_{2,k} y_{o2,j} + C_{2,k} \le 0, \\
        & A_{1,k}^2 + B_{1,k}^2 + C_{1,k}^2 = 1, \ A_{2,k}^2 + B_{2,k}^2 + C_{2,k}^2 = 1 \label{eq:exp:cr:dis-dyn-constraints:thirdorder-abc-sphere} \\
        & \forall i \in \seqordering{4}, j \in \seqordering{4} \\
        & \rx{\max}^2 - \rx{k}^2 \ge 0, \ \ry{\max}^2 - \ry{k}^2 \ge 0 \\
        & v_{\max}^2 - v_k^2 \ge 0, \ w_{\max}^2 - w_k^2 \ge 0
    \end{align}
\end{subequations}
\eqref{eq:exp:cr:dis-dyn-constraints:thirdorder-approx} provides a tighter approximation of the angular velocity compared to the first-order approximation $\fs{k} = \dt \cdot w_k$ in Lie group variational integrator, especially when $\dt$ is large ($> 0.2$s). ~\eqref{eq:exp:cr:dis-dyn-constraints:thirdorder-abc-sphere} ensures the Archimedean condition holds. From $x = x_0, y = y_0, \theta = \theta_0$, we want the car to reach $x = 0, y = -3, \theta = \frac{\pi}{2}$ while avoiding the two obstacles.

\textbf{Hyper-parameters.} Denote $x_k$ as $[\rx{k} \; \ry{k} \; \rc{k} \; \rs{k}] \in \Real{4}$ and $u_k$ as $[v_k \; w_k \; \fc{k} \; \fs{k}] \in \Real{4}$. In all experiments, take $L = 6, W = 2.5, N = 30, \dt = 0.25$ and $\rx{\max} = x_0 + 2, \ry{\max} = 8, v_{\max} = 4, w_{\max} = 0.5$. Two obstacles are set as squares, with side length $8$ and centers $(6,-4)$ and $(-6,-4)$. $P_f$ is set to $10$. The number of localizing matrices is $659$. For first-order methods, \texttt{maxiter} is set to $12000$.

\textbf{Extract optimal solution with partial rank-1 property.} 
Even with the sphere constraints~\eqref{eq:exp:cr:dis-dyn-constraints:thirdorder-abc-sphere}, the pairs \((A, B, C)\) will not be unique for a given trajectory. In fact, for a specific non-collision configuration, an infinite number of \((A, B, C)\) pairs can separate two rectangles. Consequently, the moment matrices will not be rank-1. However, in practice, we observe that the submatrix containing elements corresponding to \((x_k, u_k, x_{k+1})\) remains nearly rank-1. Thus, the optimal trajectory can still be extracted from this submatrix.

\subsection{Vehicle Landing}
\label{app:subsec:exp:vl}

\textbf{Dynamics and constraints.} For a spacecraft, its simplified continuous-time dynamics are:
\begin{subequations}
    \label{eq:exp:vl:con-dyn}
    \begin{align}
        & m \ddot{x} = -(u_1 + u_2) \sin\theta \\
        & m \ddot{y} = (u_1 + u_2) \cos\theta - m g \\
        & I \ddot{\theta} = L (u_2 - u_1) 
    \end{align}
\end{subequations}
$I$ is the inertia related to the center of mass. Figure~\ref{fig:exp:gen:sys-illustration} (c) illustrates the parameters of a spacecraft. Similar to the inverted pendulum, we can write the discretized dynamics and constraints as:
\begin{subequations}
    \label{eq:exp:vl:dis-dyn-constraints}
    \begin{align}
        & \rx{k+1} - \rx{k} = \dt \cdot \vx{k}, \ \ry{k+1} - \ry{k} = \dt \cdot \vy{k} \\
        & m \cdot (\vx{k+1} - \vx{k}) = -\dt \cdot (u_{1,k} + u_{2,k}) \rs{k} \\
        & m \cdot (\vy{k+1} - \vy{k}) = \dt \cdot (u_{1,k} + u_{2,k}) \rc{k} - mg \\
        & I \cdot (\fs{k+1} - \fs{k}) = L \dt^2 \cdot (u_{2,k} - u_{1,k}) \\
        & ~\eqref{eq:exp:p:dis-dyn-constraints-rcupdate},~\eqref{eq:exp:p:dis-dyn-constraints-rsupdate},~\eqref{eq:exp:p:dis-dyn-constraints-so2-r},~\eqref{eq:exp:p:dis-dyn-constraints-so2-f},~\eqref{eq:exp:p:dis-dyn-constraints-fcmin} \\
        & u_{i,k} \cdot (u_{i,\max} - u_{i,k}) \ge 0, i = 1, 2 \\
        & \rx{\max}^2 - \rx{k}^2 \ge 0, \ (\ry{k} - \ry{\min}) (\ry{\max} - \ry{k}) \ge 0 \\
        & \vx{\max}^2 - \vx{k}^2 \ge 0, \ \vy{\max}^2 - \vy{k}^2 \ge 0 
    \end{align}
\end{subequations}
Starting from $x = x_0, y = y_0, \theta = \theta_0, \dot{x} = \dot{x}_0, \dot{y} = \dot{y}_0, \dot{\theta} = \dot{\theta}_0$, we want the spacecraft to land at $x = 0, y = 10, \theta = 0$ with linear translational and angular velocities equal to $0$.

\textbf{Hyper-parameters.} Denote $x_k$ as $[\rx{k} \; \ry{k} \; \vx{k} \; \vy{k} \; \rc{k} \; \rs{k} \; \fc{k} \; \fs{k}] \in \Real{8}$ and $u_k$ as $[u_{1, k} \; u_{2,k}] \in \Real{2}$. In all experiments, take $m = 1, I = 50, L = 5, N = 50, \dt = 0.2, \fc{\min} = 0.7$ and $\rx{\max} = 100, \ry{\min} = 10, \ry{\max} = 120, \vx{\max} = \vy{\max} = 20, u_{1,\max} = u_{2,\max} = 8$. $P_f$ in~\eqref{eq:exp:p:con-dyn} is set to $10$. The number of localizing matrices is $499$. For first-order methods, \texttt{maxiter} is set to $10000$. 

Figure~\ref{fig:exp:vl:demos-plots} shows three globally optimal trajectories.


\begin{figure}[t]
    \begin{minipage}{\textwidth}
        \centering
        \begin{tabular}{ccc}
            \begin{minipage}{0.33\textwidth}
                \centering
                \includegraphics[width=\columnwidth]{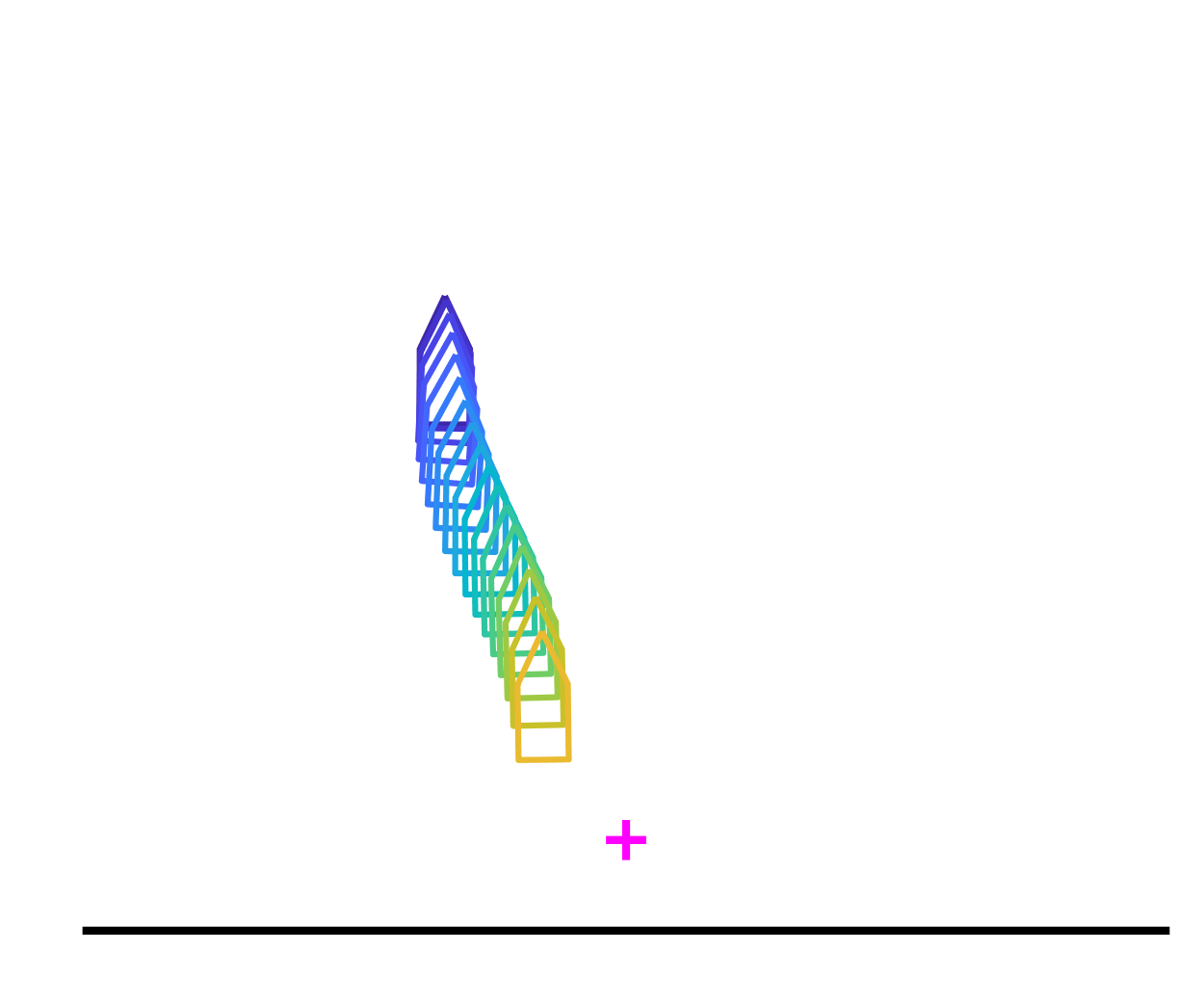}
            \end{minipage}

            \begin{minipage}{0.33\textwidth}
                \centering
                \includegraphics[width=\columnwidth]{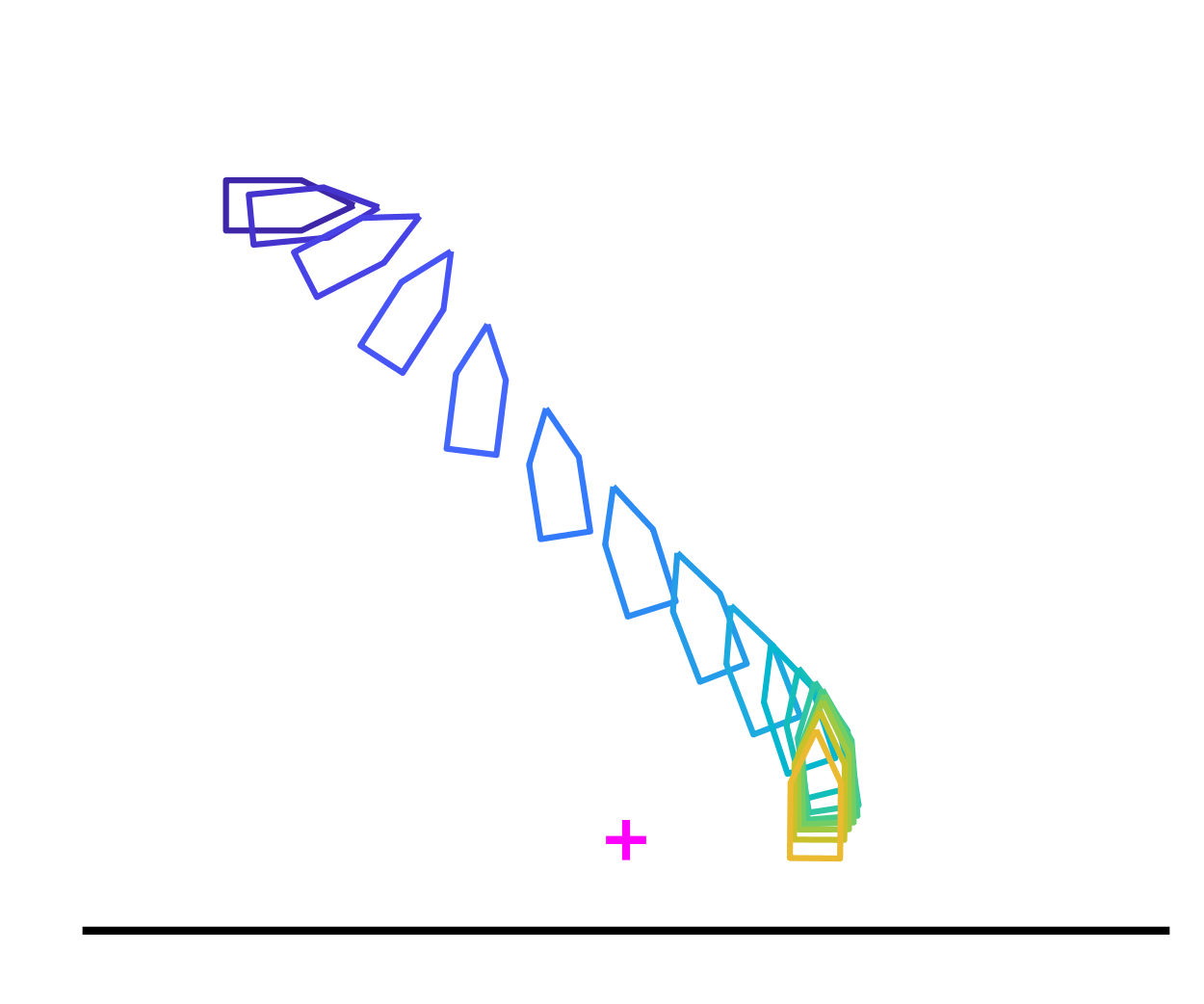}
            \end{minipage}

            \begin{minipage}{0.33\textwidth}
                \centering
                \includegraphics[width=\columnwidth]{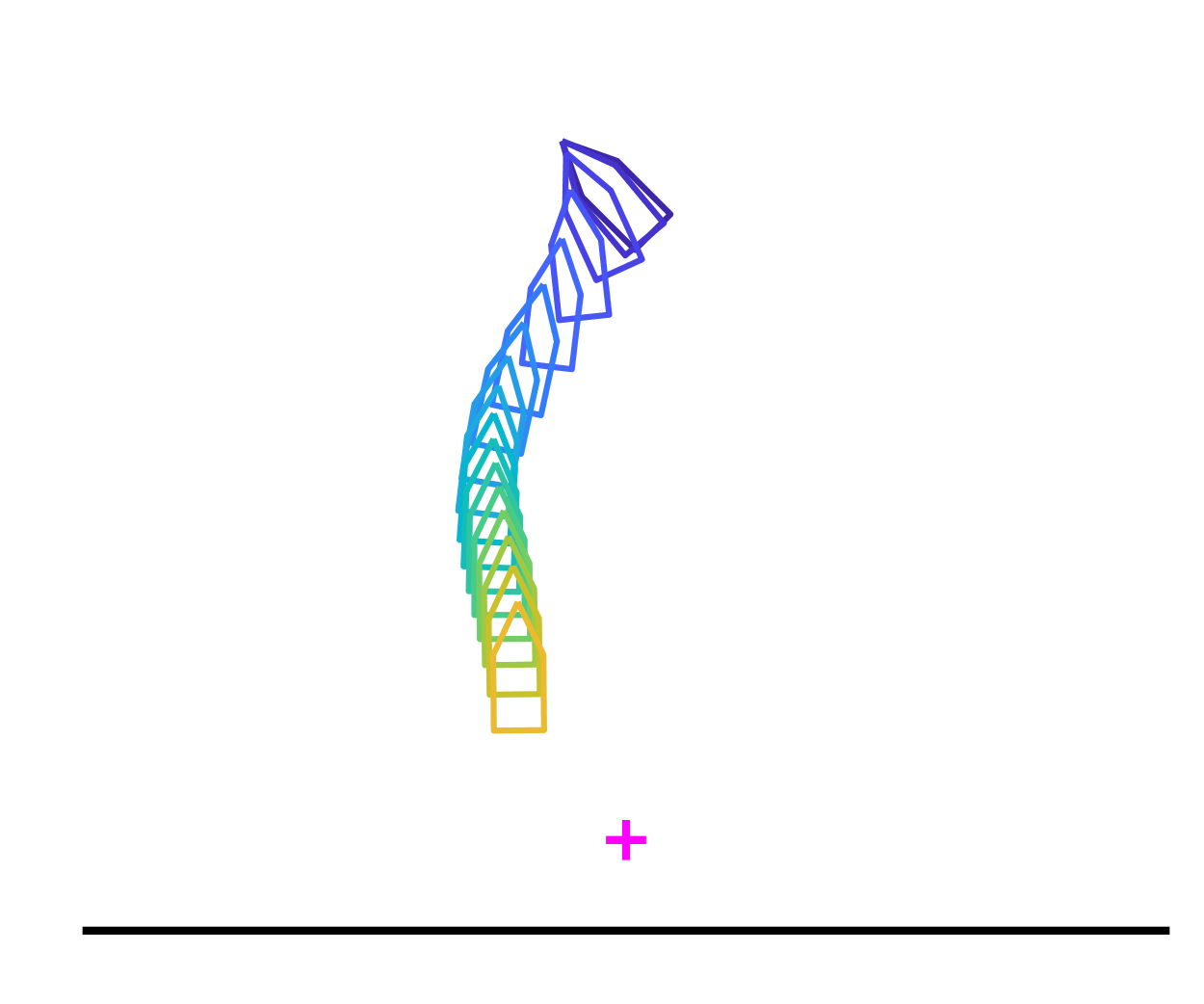}
            \end{minipage}
        \end{tabular}
    \end{minipage}

    \begin{minipage}{\textwidth}
        \centering
        \begin{tabular}{ccc}
            \begin{minipage}{0.33\textwidth}
                \centering
                \includegraphics[width=\columnwidth]{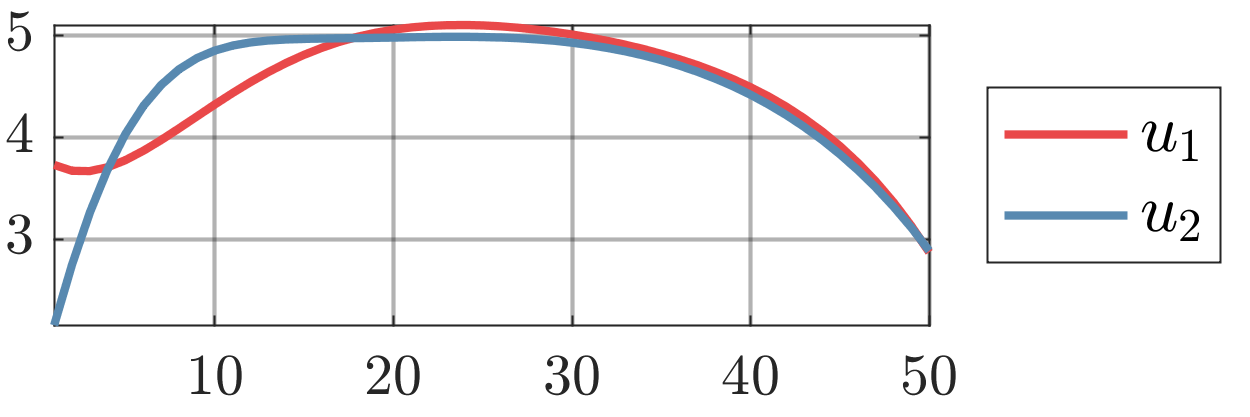}
            \end{minipage}

            \begin{minipage}{0.33\textwidth}
                \centering
                \includegraphics[width=\columnwidth]{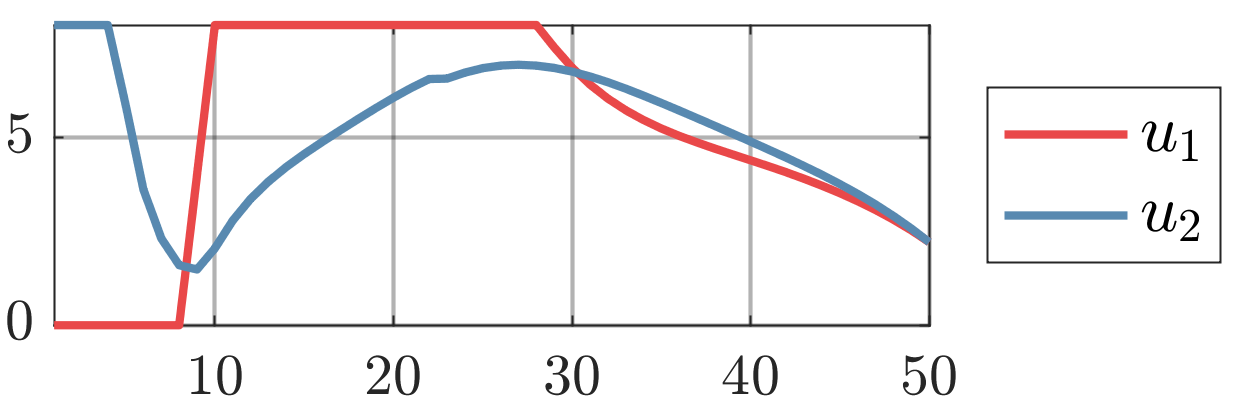}
            \end{minipage}

            \begin{minipage}{0.33\textwidth}
                \centering
                \includegraphics[width=\columnwidth]{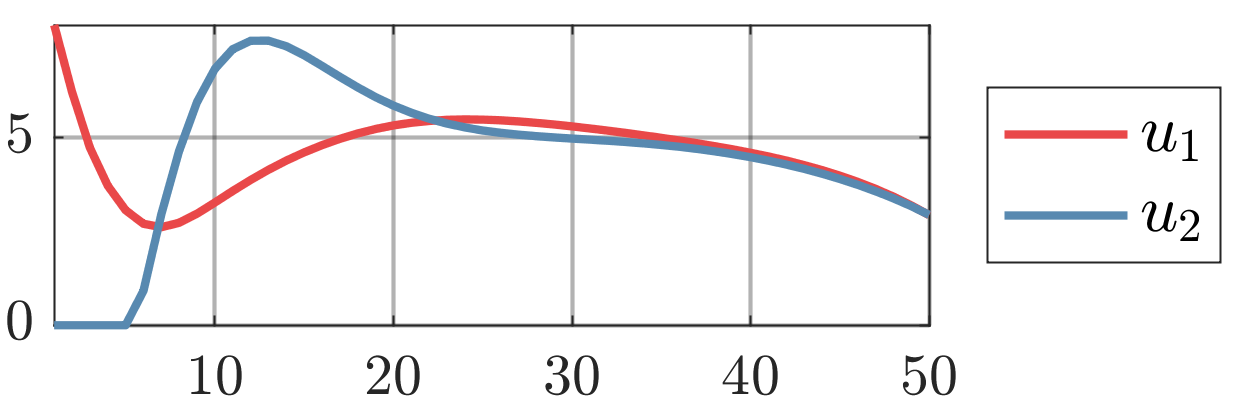}
            \end{minipage}
        \end{tabular}
    \end{minipage}

    \begin{minipage}{\textwidth}
        \centering
        \begin{tabular}{ccc}
            \begin{minipage}{0.33\textwidth}
                \centering
                \includegraphics[width=\columnwidth]{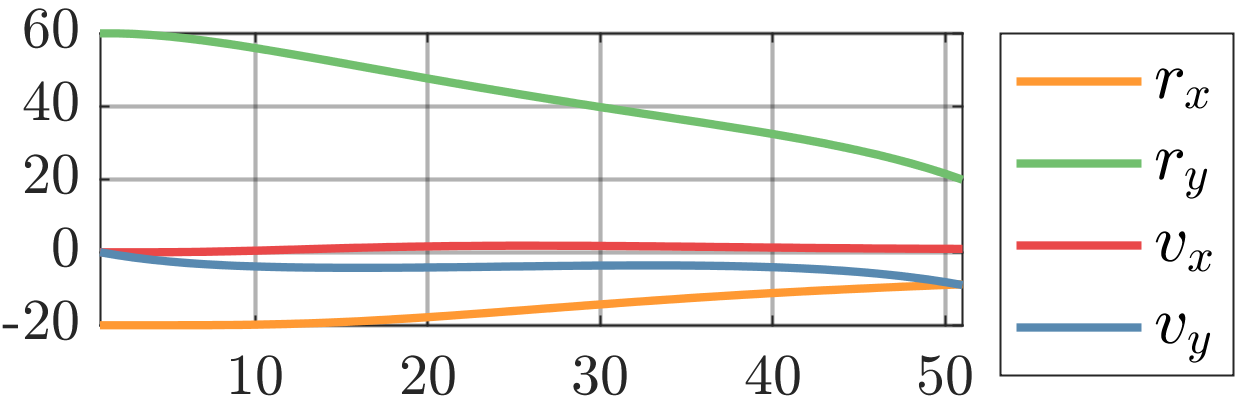}
            \end{minipage}

            \begin{minipage}{0.33\textwidth}
                \centering
                \includegraphics[width=\columnwidth]{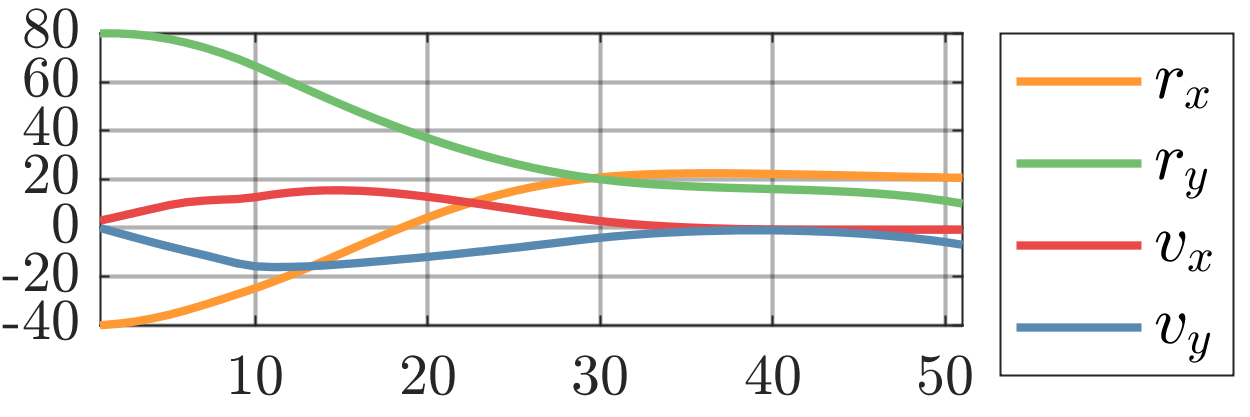}
            \end{minipage}

            \begin{minipage}{0.33\textwidth}
                \centering
                \includegraphics[width=\columnwidth]{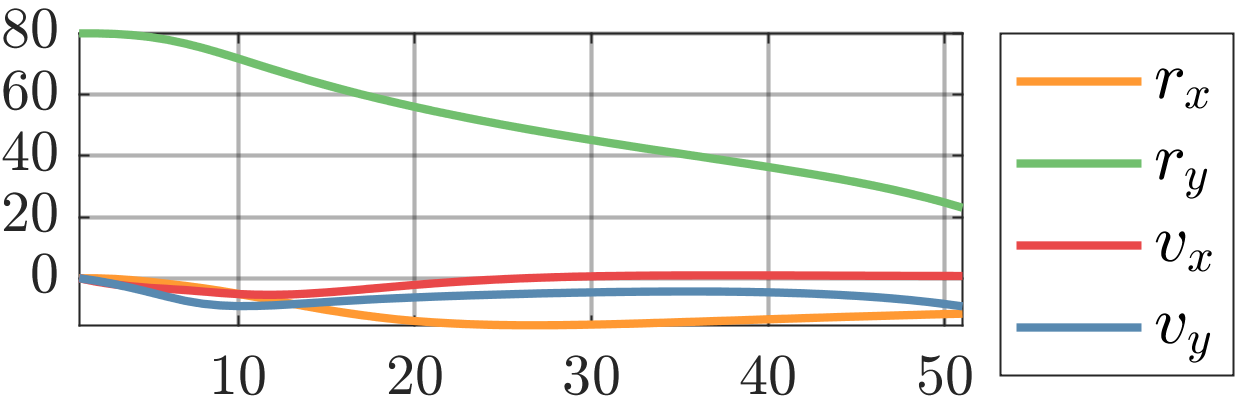}
            \end{minipage}
        \end{tabular}
    \end{minipage}

    \begin{minipage}{\textwidth}
        \centering
        \begin{tabular}{ccc}
            \begin{minipage}{0.33\textwidth}
                \centering
                \includegraphics[width=\columnwidth]{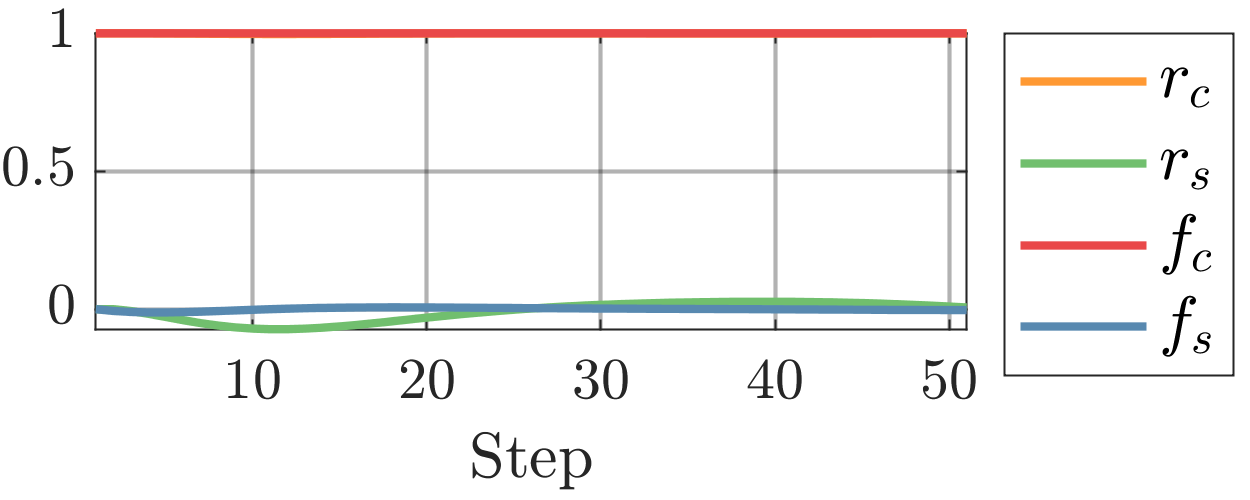}
                (a)
            \end{minipage}

            \begin{minipage}{0.33\textwidth}
                \centering
                \includegraphics[width=\columnwidth]{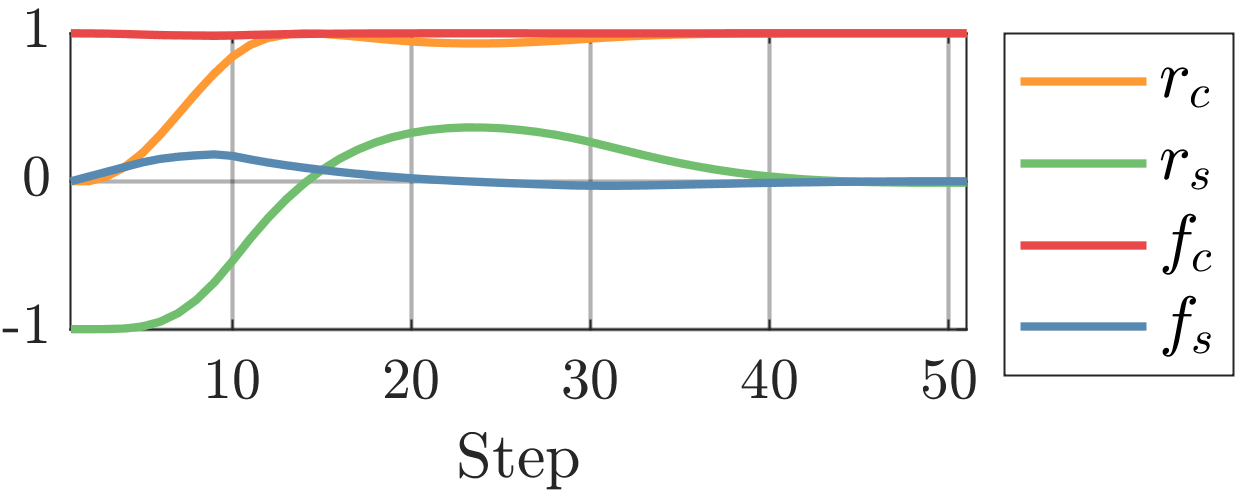}
                (b)
            \end{minipage}

            \begin{minipage}{0.33\textwidth}
                \centering
                \includegraphics[width=\columnwidth]{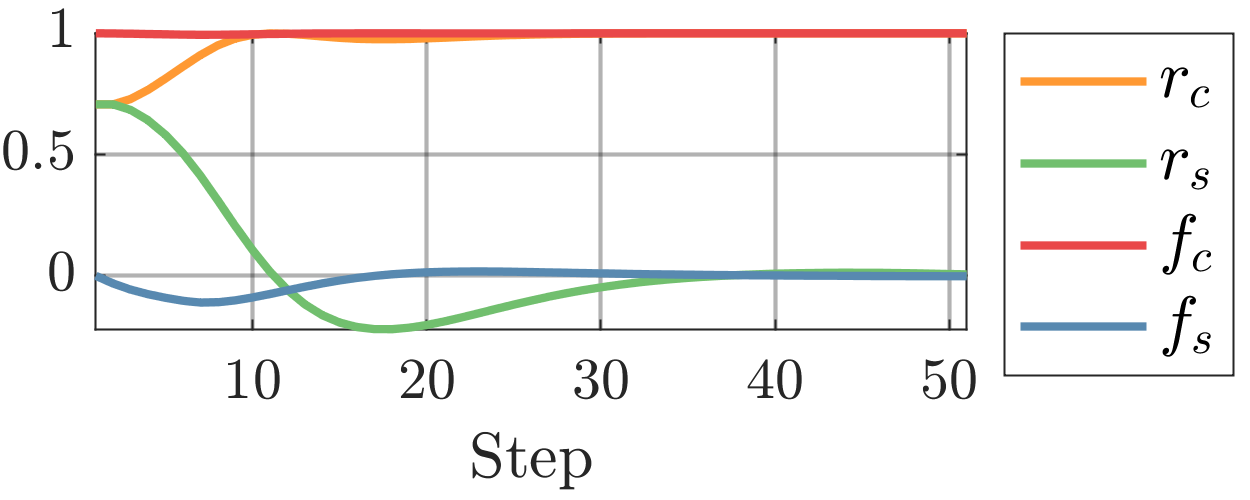}
                (c)
            \end{minipage}
        \end{tabular}
    \end{minipage}

    \caption{Global optimal trajectories for the vehicle landing problem.\label{fig:exp:vl:demos-plots}}
    \vspace{3mm}
    
\end{figure}

\subsection{Flying Robot}
\label{app:subsec:exp:fr}

\textbf{Dynamics and constraints.} We consider a drifting robot with four fumaroles, as shown in Figure~\ref{fig:exp:gen:sys-illustration} (d). The continuous dynamics are:
\begin{subequations}
    \label{eq:exp:fr:con-dyn}
    \begin{align}
        & \ddot{x} = (u_1 - u_2 + u_3 - u_4) \cos\theta \\
        & \ddot{y} = (u_1 - u_2 + u_3 - u_4) \sin\theta \\
        & \ddot{\theta} = \alpha (u_1 - u_2) + \beta (u_3 - u_4) 
    \end{align}
\end{subequations}

As shown in Figure~\ref{fig:exp:gen:sys-illustration} (d). The corresponding discretized nonlinear dynamics and constraints are:
\begin{subequations}
    \label{eq:exp:fr:dis-dyn-constraints}
    \begin{align}
        & \rx{k+1} - \rx{k} = \dt \cdot \vx{k}, \ \ry{k+1} - \ry{k} = \dt \cdot \vy{k} \\
        & (\vx{k+1} - \vx{k}) = \dt \cdot (u_{1,k} - u_{2,k} + u_{3,k} - u_{4,k}) \rc{k} \\
        & (\vy{k+1} - \vy{k}) = \dt \cdot (u_{1,k} - u_{2,k} + u_{3,k} - u_{4,k}) \rs{k} \\
        & (\fs{k+1} - \fs{k}) = \dt^2 \alpha \cdot (u_{2,k} - u_{1,k}) + \dt^2 \beta \cdot (u_{3,k} - u_{4,k}) \\
        & ~\eqref{eq:exp:p:dis-dyn-constraints-rcupdate},~\eqref{eq:exp:p:dis-dyn-constraints-rsupdate},~\eqref{eq:exp:p:dis-dyn-constraints-so2-r},~\eqref{eq:exp:p:dis-dyn-constraints-so2-f},~\eqref{eq:exp:p:dis-dyn-constraints-fcmin} \\
        & u_{i,k} \cdot (u_{i,\max} - u_{i,k}) \ge 0, i = 1, 2, 3, 4 \\
        & \rx{\max}^2 - \rx{k}^2 \ge 0, \ \ry{\max}^2 - \ry{k}^2 \ge 0 \\
        & \vx{\max}^2 - \vx{k}^2 \ge 0, \ \vy{\max}^2 - \vy{k}^2 \ge 0 
    \end{align}
\end{subequations}
From $x = x_0, y = y_0, \theta = \theta_0, \dot{x} = \dot{x}_0, \dot{y} = \dot{y}_0, \dot{\theta} = \dot{\theta}_0$, we want the drifting robot to reach $x = 0, y = 0, \theta = 0$ with translational and angular velocities equal to $0$. 

\textbf{Hyper-parameters.} Denote $x_k$ as $[\rx{k} \; \ry{k} \; \vx{k} \; \vy{k} \; \rc{k} \; \rs{k} \; \fc{k} \; \fs{k}] \in \Real{8}$ and $u_k$ as $[u_{1, k} \; u_{2,k} \; u_{3,k} \; u_{4,k}] \in \Real{4}$. In all experiments, take $\alpha = 0.2, \beta = 0.2, N = 60, \dt = 0.2, \fc{\min} = 0.7$ and $\rx{\max} = 10, \ry{\max} = 10, \vx{\max} = \vy{\max} = 10, u_{i,\max} = 8, i \in \seqordering{4}$. $P_f$ in~\eqref{eq:exp:p:con-dyn} is set to $10$. The number of localizing matrices is $659$. For first-order methods, \texttt{maxiter} is set to $12000$.

\end{document}